\renewcommand{\mathcal}{\mathscr}
\renewcommand{\leq}{\leqslant}
\renewcommand{\geq}{\geqslant}
\definecolor{citation}{rgb}{0.2,0.5,0.2}
\definecolor{formula}{rgb}{0.1,0.2,0.5}
\definecolor{url}{rgb}{0,0.2,0.7}
\newtheorem{theo}{Theorem}[section]
\newtheorem{cor}[theo]{Corollary}
\newtheorem{lemma}[theo]{Lemma}
\newtheorem{prop}[theo]{Proposition}
\theoremstyle{definition}
\newtheorem{de}[theo]{Definition}
\theoremstyle{fact}
\newtheorem{fact}[theo]{Fact}
\theoremstyle{remark}
\newtheorem{remark}[theo]{Remark}
\newtheorem{ex}[theo]{Example}
\numberwithin{equation}{section}
\renewcommand{\tilde}{\widetilde}
\def\R {\mathbb{R}}
\def\N {\mathbb{N}}
\def\Z {\mathbb{Z}}
\def\eps{\varepsilon}
\newlength{\defbaselineskip}
\newcommand{\setlinespacing}[1]
           {\setlength{\baselineskip}{#1 \defbaselineskip}}
\begin{document}

\title[On restrictions of Besov functions]
{On restrictions of Besov functions}

\author[Julien Brasseur]{Julien Brasseur}
\address[Julien Brasseur]{INRA Avignon, unit\'e BioSP and Aix-Marseille Univ, CNRS,
Centrale Marseille,
I2M, Marseille,
France}
\email{julien.brasseur@univ-amu.fr, julien.brasseur@inra.fr}

\begin{abstract}
In this paper, we study the smoothness of restrictions of Besov functions. It is known that for any $f\in B_{p,q}^s(\mathbb{R}^N)$ with $q\leq p$ we have $f(\cdot,y)\in B_{p,q}^s(\mathbb{R}^d)$ for a.e. $y\in \mathbb{R}^{N-d}$. We prove that this is no longer true when $p<q$. Namely, we construct a function $f\in B_{p,q}^s(\mathbb{R}^N)$ such that $f(\cdot,y)\notin B_{p,q}^s(\mathbb{R}^d)$ for a.e. $y\in \mathbb{R}^{N-d}$. We show that, in fact, $f(\cdot,y)$ belong to $B_{p,q}^{(s,\Psi)}(\mathbb{R}^d)$ for a.e. $y\in\mathbb{R}^{N-d}$, a Besov space of generalized smoothness, and, when $q=\infty$, we find the optimal condition on the function $\Psi$ for this to hold. The natural generalization of these results to Besov spaces of generalized smoothness is also investigated.
\end{abstract}

\subjclass[2010]{35J50}

\keywords{Besov spaces, restriction to almost every hyperplanes, generalized smoothness.}

\maketitle

\tableofcontents

\section{Introduction}

In this paper, we address the following question: given a function $f\in B_{p,q}^s(\R^N)$,
\begin{center}
\emph{what can be said about the smoothness of} $f(\cdot,y)$ \emph{for a.e.} $y\in\R^{N-d}$ \emph{?}
\end{center}
In order to formulate this as a meaningful question, one is naturally led to restrict oneself to $1\leq d<N$, $0<p,q\leq\infty$ and $s>\sigma_p$, where
\begin{align}\sigma_p=N\bigg(\frac{1}{p}-1\bigg)_+, \label{sigmap}
\end{align}
since otherwise $f\in B_{p,q}^s(\R^N)$ need not be a regular distribution.

Let us begin with a simple observation. If $f\in L^p(\R^N)$ for some $0<p\leq\infty$, then
$$ f(\cdot,y)\in L^p(\mathbb{R}^d) \qquad{\mbox{ for a.e. }}~~y\in\mathbb{R}^{N-d}. $$
This is a straightforward consequence of Fubini's theorem. Using similar Fubini-type arguments, one can show that, if $f\in W^{s,p}(\R^N)$ for some $0<p\leq\infty$ and $\sigma_p<s\notin\N$, then we have $f(\cdot,y)\in W^{s,p}(\mathbb{R}^d)$ for a.e. $y\in\mathbb{R}^{N-d}$. We say that these spaces have the \emph{restriction property}.

Unlike their cousins, the Triebel-Lizorkin spaces $F_{p,q}^s(\mathbb{R}^N)$, Besov spaces do not enjoy the Fubini property unless $p=q$, that is
\begin{align}
&\sum_{j=1}^N\left\| \|f(x_1,...,x_{j-1},\cdot,x_{j+1},...,x_N)\|_{B_{p,q}^s(\mathbb{R})}\right\|_{L^p(\mathbb{R}^{N-1})}, \nonumber
\end{align}
is an equivalent quasi-norm on $B_{p,q}^s(\R^N)$ if, and only if, $p=q$; while the counterpart for $F_{p,q}^s(\R^N)$ holds for any given values of $p$ and $q$ where it makes sense (see \cite[Theorem 4.4, p.36]{Trieb} for a proof). In particular, $B_{p,p}^s(\R^N)$ and $F_{p,q}^s(\R^N)$ have the restriction property. It is natural to ask wether or not this feature holds in $B_{p,q}^s(\R^N)$ for an arbitrary $q\neq p$.

Let us recall some known facts.
\begin{fact}\label{FACT1}
Let $N\geq2$, $1\leq d<N$, $0<q\leq p\leq\infty$, $s>\sigma_p$ and $f\in B_{p,q}^s(\mathbb{R}^N)$. Then,
\begin{align}
f(\cdot,y)\in B_{p,q}^{s}(\R^d) \qquad{\mbox{ for a.e. }}~~y\in\R^{N-d}. \nonumber
\end{align}
\end{fact}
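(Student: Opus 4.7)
The plan is to prove the quantitative bound
\begin{equation*}
\int_{\R^{N-d}} \|f(\cdot,y)\|_{B_{p,q}^s(\R^d)}^p\, dy \lesssim \|f\|_{B_{p,q}^s(\R^N)}^p,
\end{equation*}
from which the a.e. conclusion of Fact~\ref{FACT1} follows at once. I would first assume $p<\infty$, so that also $q<\infty$ since $q\leq p$, and dispose of the edge case $p=q=\infty$ at the end.

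Fixing an integer $M>s$, the starting point is the standard characterization of $B_{p,q}^s$ (on $\R^d$ or $\R^N$) by $M$-th order differences, available whenever $s>\sigma_p$:
\begin{equation*}
\|g\|_{B_{p,q}^s(\R^d)} \;\simeq\; \|g\|_{L^p(\R^d)} + \left(\int_{|h|\leq 1}|h|^{-sq}\,\|\Delta_h^M g\|_{L^p(\R^d)}^q\,\frac{dh}{|h|^d}\right)^{\!1/q}.
\end{equation*}
Raising to the $p$-th power and using the quasi-triangle inequality, the $L^p$-piece is trivial by Fubini, so matters reduce to controlling
\begin{equation*}
I := \int_{\R^{N-d}}\left(\int_{|h|\leq 1,\,h\in\R^d}|h|^{-sq}\,\|\Delta_h^M f(\cdot,y)\|_{L^p(\R^d)}^q\,\frac{dh}{|h|^d}\right)^{\!p/q}dy.
\end{equation*}

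The decisive step, and the one where the hypothesis $q\leq p$ must enter, is to invoke Minkowski's integral inequality: since $p/q\geq 1$, one pulls the outer $L^{p/q}(dy)$ through the inner integration,
\begin{equation*}
I^{q/p} \leq \int_{|h|\leq 1,\,h\in\R^d}|h|^{-sq}\left(\int_{\R^{N-d}}\|\Delta_h^M f(\cdot,y)\|_{L^p(\R^d)}^p\,dy\right)^{\!q/p}\frac{dh}{|h|^d}.
\end{equation*}
Identifying $h\in\R^d$ with $\tilde h=(h,0)\in\R^N$, the pointwise identity $\Delta_h^M f(\cdot,y)(x)=\Delta_{\tilde h}^M f(x,y)$ combined with Fubini turns the inner integral into $\|\Delta_{\tilde h}^M f\|_{L^p(\R^N)}^p$. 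A polar change of variables $h=t\omega$ with $\omega\in S^{d-1}$, followed by the crude bound $\|\Delta_{\tilde h}^M f\|_{L^p(\R^N)}\leq \sup_{|k|\leq t}\|\Delta_k^M f\|_{L^p(\R^N)}$, then collapses the estimate to
\begin{equation*}
I^{q/p}\lesssim \int_0^1 t^{-sq}\,\Big(\sup_{|k|\leq t}\|\Delta_k^M f\|_{L^p(\R^N)}\Big)^{\!q}\,\frac{dt}{t} \;\lesssim\; \|f\|_{B_{p,q}^s(\R^N)}^q,
\end{equation*}
by the analogous difference characterization on $\R^N$.

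The leftover case $p=q=\infty$ is immediate, since $B_{\infty,\infty}^s$ coincides with the H\"older--Zygmund class and the pointwise bound $|\Delta_h^M f(x,y)|\lesssim |h|^s$ restricts without loss to every $y$. The main obstacle I anticipate is purely bookkeeping: keeping the two Fubini applications, the polar decomposition, and the Minkowski swap in sync, so that the hypothesis $q\leq p$ is exactly what legitimizes the exchange of integrals. That this exchange fails when $p<q$ is consistent with the paper's announced construction showing that the restriction property itself breaks in that regime.
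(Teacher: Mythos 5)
Your proof is correct, but it takes a genuinely different route from the paper. The paper obtains Fact~\ref{FACT1} as the case $\Psi\equiv1$ of Proposition~\ref{caracBesov2}, whose proof is built on subatomic (quarkonial) decompositions: $f$ is expanded along tensor-product $(s,p)$-$\beta$-quarks, the problem is translated into one about the coefficient sequences, and the key estimates come from the discrete ``amalgam'' identity of Lemma~\ref{ell1ekiv} and the H\"older-type reduction of Lemma~\ref{dommm}; the output is a local bound $\int_K\|f(\cdot,x'')\|_{B_{p,q}^s(\R^d)}^q\,dx'' \lesssim_K \|f\|_{B_{p,q}^s(\R^N)}^q$ over compact $K\subset\R^{N-d}$. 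You instead stay entirely within the finite-difference picture and apply Minkowski's integral inequality for the exponent $p/q\geq1$ together with two Fubini exchanges and a polar decomposition, producing the cleaner global mixed-norm estimate $\int_{\R^{N-d}}\|f(\cdot,y)\|_{B_{p,q}^s(\R^d)}^p\,dy \lesssim \|f\|_{B_{p,q}^s(\R^N)}^p$. Your route is more elementary and makes the role of $q\leq p$ transparent (it is precisely the hypothesis that legitimizes Minkowski); the paper's heavier machinery is chosen because it is reused wholesale in Sections 4--7, in particular to absorb the extra weight $\Psi(2^{-\nu})$ in the generalized-smoothness setting.

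One small gap in the casework: you reduce to $p<\infty$ (whence $q<\infty$) and then dispose only of $p=q=\infty$, but the case $p=\infty$, $0<q<\infty$ is allowed by the hypotheses and falls through the cracks. Your main argument needs $p<\infty$ both for the Fubini identity $\int_{\R^{N-d}}\|\Delta_h^Mf(\cdot,y)\|_{L^p(\R^d)}^p\,dy=\|\Delta_{(h,0)}^Mf\|_{L^p(\R^N)}^p$ and for the exponent $p/q$ in Minkowski to be finite. The fix is easy and in the same spirit as your $p=q=\infty$ remark: since $B_{\infty,q}^s(\R^N)\hookrightarrow C^s(\R^N)$ one may pass to the continuous representative, obtain $\|\Delta_h^Mf(\cdot,y)\|_{L^\infty(\R^d)}\leq\|\Delta_{(h,0)}^Mf\|_{L^\infty(\R^N)}$ for every $y$, and then integrate in $h$ exactly as in your polar step to get $\|f(\cdot,y)\|_{B_{\infty,q}^s(\R^d)}\lesssim\|f\|_{B_{\infty,q}^s(\R^N)}$ for all $y$ — but as written this case is missing.
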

(A proof of a slightly more general result will be given in the sequel, see Proposition \ref{caracBesov2}.)

In fact, there is a weaker version of Fact \ref{FACT1}, which shows that this stays "almost" true when $p<q$. This can be stated as follows
\begin{fact}\label{FACT2}
Let $N\geq2$, $1\leq d<N$, $0<p<q\leq\infty$, $s>\sigma_p$ and $f\in B_{p,q}^s(\mathbb{R}^N)$. Then,
\begin{align}
f(\cdot,y)\in\bigcap_{s'<s} B_{p,q}^{s'}(\R^d) \qquad{\mbox{ for a.e. }}~~y\in\R^{N-d}. \nonumber
\end{align}
\end{fact}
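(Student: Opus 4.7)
The plan is to reduce to Fact \ref{FACT1} by trading an arbitrarily small amount of smoothness for a reduction of the fine index from $q$ down to $p$. The key tool is the standard Besov embedding
\[
B_{p,q_1}^{s_1}(\R^N) \hookrightarrow B_{p,q_2}^{s_2}(\R^N) \qquad \text{whenever } s_1 > s_2,
\]
valid for \emph{any} $q_1, q_2 \in (0,\infty]$ (one can go $B_{p,q_1}^{s_1} \hookrightarrow B_{p,\infty}^{s_1} \hookrightarrow B_{p,1}^{s_2} \hookrightarrow B_{p,q_2}^{s_2}$, or invoke the usual characterization directly). In particular, for every $s' < s$ one has $B_{p,q}^{s}(\R^N) \hookrightarrow B_{p,p}^{s'}(\R^N)$, which brings $f$ into the range of applicability of Fact \ref{FACT1}.

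First I would fix a countable sequence $\sigma_p < s_1 < s_2 < \cdots < s$ with $s_k \uparrow s$ (e.g.\ $s_k = s - 2^{-k}$ for $k$ large enough so that $s_k > \sigma_p$). For each $k$, the embedding above yields $f \in B_{p,p}^{s_k}(\R^N)$, so Fact \ref{FACT1}, applied with fine index $p$ and smoothness $s_k$, produces a Lebesgue-negligible set $E_k \subset \R^{N-d}$ such that
\[
f(\cdot,y) \in B_{p,p}^{s_k}(\R^d) \qquad \text{for every } y \in \R^{N-d} \setminus E_k.
\]
I would then set $E := \bigcup_{k\ge1} E_k$, which is still Lebesgue-negligible.

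For $y \in \R^{N-d} \setminus E$ we then have $f(\cdot,y) \in B_{p,p}^{s_k}(\R^d)$ for \emph{every} $k$. Given any $s' < s$, I would pick $k$ with $s_k > s'$ and invoke the embedding $B_{p,p}^{s_k}(\R^d) \hookrightarrow B_{p,q}^{s'}(\R^d)$ (again valid for arbitrary fine indices because $s_k > s'$) to conclude $f(\cdot,y) \in B_{p,q}^{s'}(\R^d)$. Since $s' < s$ was arbitrary, this gives the desired membership in the intersection.

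The only mild obstacle is a purely set-theoretic one: the negligible set delivered by Fact \ref{FACT1} depends on the smoothness level, so it is not immediate that a \emph{single} null set works uniformly in $s'$. This is handled by the standard countable-exhaustion trick above, combined with monotonicity of Besov spaces in the smoothness parameter. Apart from this, the argument is a formal manipulation of Besov embeddings and no genuine difficulty is expected; in particular, the loss of smoothness is unavoidable in this approach precisely because one has to pass through the diagonal case $q=p$ where Fact \ref{FACT1} applies.
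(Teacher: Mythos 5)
Your argument is correct, and it takes a genuinely more elementary route than the one indicated in the paper. The paper does not give a self-contained proof of Fact~\ref{FACT2}: it first cites external references (Jaffard; Aubry--Maman--Seuret) and later observes that Fact~\ref{FACT2} is a consequence of Theorem~\ref{TH:WEIGHT}, which is established in Section~\ref{SE:WEIGHT} via subatomic decompositions and the machinery of Besov spaces of generalized smoothness $B_{p,q}^{(s,\Psi)}(\R^d)$. That route buys a sharper conclusion: a single increasing admissible $\Psi$ satisfying the summability condition~\eqref{H:WEIGHT} yields $f(\cdot,y)\in B_{p,q}^{(s,\Psi)}(\R^d)$ for a.e.\ $y$, which then embeds into $B_{p,q}^{s'}(\R^d)$ for \emph{every} $s'<s$ at once. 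Your proof trades this sharpness for simplicity: reduce to the diagonal case $q=p$ by the elementary embedding $B_{p,q}^{s}(\R^N)\hookrightarrow B_{p,p}^{s_k}(\R^N)$ for $s_k<s$, apply Fact~\ref{FACT1}, and handle the $s'$-dependence of the null set by a countable exhaustion $s_k\uparrow s$ combined with monotonicity of the Besov scale in $s$. All the steps go through; the only small imprecision is that the suggested chain $B_{p,1}^{s_2}\hookrightarrow B_{p,q_2}^{s_2}$ fails when $q_2<1$ (since $\ell^1\not\subset\ell^{q_2}$ there), but this is harmless because, as you also note, the embedding $B_{p,\infty}^{s_1}\hookrightarrow B_{p,q_2}^{s_2}$ for $s_1>s_2$ holds for all $q_2\in(0,\infty]$ and can be checked directly.
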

See e.g. \cite[Theorem 1]{Jaffard} or \cite[Theorem 1.1]{Aubry}.

Mironescu \cite{Petru} suggested that it might be possible to construct a counterexample to Fact \ref{FACT1} when $p<q$. We prove that this is indeed the case. This is quite remarkable since, to our knowledge, the list of properties of the spaces $B_{p,q}^s$ where $q$ plays a crucial role is rather short.
%\footnotemark\footnotetext{To a lesser extent, we mention \cite[Theorem 1.2]{Johnsen} and \cite{Sickel} which point out a similar influence of $q$, though in a different context.}

Our first result is the following
\begin{theo}\label{THEOREM}
Let $N\geq2$, $1\leq d<N$, $0< p<q\leq\infty$ and let $s>\sigma_p$. Then, there exists a function $f\in B_{p,q}^s(\R^N)$ such that
\begin{align}
f(\cdot,y)\notin B_{p,\infty}^{s}(\R^d) \qquad{\mbox{ for a.e. }}~~y\in \R^{N-d}. \nonumber
\end{align}
\end{theo}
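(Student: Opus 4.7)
\emph{Strategy.} The plan is to build $f$ as a lacunary superposition of smooth bumps placed on a sparse, randomised family of dyadic cubes at each scale in $\R^N=\R^d\times\R^{N-d}$. Because $p<q$, gluing $M_j$ disjoint copies of a single bump at scale $j$ multiplies its $L^p(\R^N)$-norm by $M_j^{1/p}$, and once inserted in the $\ell^q$-sum over $j$ this factor tolerates placements dense enough that a.e.\ slice $f(\cdot,y)$ still picks up infinitely many bumps. Since the $B_{p,\infty}^s(\R^d)$-norm only records the $\ell^\infty_j$ supremum of the scale-$j$ $L^p$-norms, blowing up that supremum along an infinite sequence of scales suffices to exclude the slice from $B_{p,\infty}^s(\R^d)$.

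\emph{Construction.} Fix $\phi_1\in\mathcal{S}(\R^d)$, not identically zero, with $\widehat{\phi_1}$ supported in the annulus $\{1/2\le|\xi|\le 2\}$, together with $\phi_2\in C_c^\infty(\R^{N-d})$ satisfying $\phi_2\equiv 1$ on $[-1/2,1/2]^{N-d}$ and $\mathrm{supp}\,\phi_2\subset(-1,1)^{N-d}$. Partition $[0,1)^{N-d}$ into the $2^{j(N-d)}$ dyadic sub-cubes $Q_{j,m}$ of side $2^{-j}$ and centres $y_{j,m}$, and, independently for each $j\ge 1$, draw a uniformly random subset $\Sigma_j\subset\{1,\dots,2^{j(N-d)}\}$ of cardinality $M_j=\lfloor 2^{j(N-d)}/j\rfloor$. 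Set
$$
f(x,y)=\sum_{j\ge 1} a_j \sum_{m\in\Sigma_j} \phi_1(2^j x)\,\phi_2\bigl(2^j(y-y_{j,m})\bigr),
$$
where $(a_j)$ is positive and will be chosen below. The annular support of $\widehat{\phi_1}$ makes each inner sum $f_j$ Fourier-localised in $x$ to $\{|\xi|\sim 2^j\}$; moreover, the $y$-supports of the terms $\phi_2(2^j(\cdot-y_{j,m}))$ overlap with multiplicity at most $3^{N-d}$, so uniformly in the random choice of $\Sigma_j$,
$$
\|f_j\|_{L^p(\R^N)}\asymp a_j M_j^{1/p}2^{-jN/p}.
$$
The Littlewood--Paley characterisation of $B_{p,q}^s(\R^N)$ then delivers
$$
\|f\|_{B_{p,q}^s(\R^N)}^q\lesssim \sum_j 2^{jsq}\|f_j\|_{L^p}^q\asymp\sum_j a_j^q\,2^{jq(s-d/p)}\,j^{-q/p}.
$$

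\emph{Slice analysis and parameters.} For fixed $y\in[0,1)^{N-d}$, let $m(j,y)$ be the index of the unique $Q_{j,m}\ni y$; then $\mathrm{Prob}\bigl(m(j,y)\in\Sigma_j\bigr)=M_j/2^{j(N-d)}\asymp 1/j$, and these events are mutually independent in $j$. Since $\sum_j 1/j=\infty$, the second Borel--Cantelli lemma together with Fubini produces a deterministic realisation of $(\Sigma_j)_{j\ge 1}$ for which $J(y):=\{j:m(j,y)\in\Sigma_j\}$ is infinite for a.e.\ $y\in[0,1)^{N-d}$. Fix such a realisation. For $j\in J(y)$, because $\phi_2\equiv 1$ on $Q_{j,m(j,y)}$, the scale-$j$ contribution to $f(\cdot,y)$ equals $a_j\phi_1(2^j\cdot)$ up to a bounded overlap factor from neighbouring cubes, an $x$-function still Fourier-localised at $|\xi|\sim 2^j$; the Littlewood--Paley lower bound yields
$$
\|f(\cdot,y)\|_{B_{p,\infty}^s(\R^d)}\gtrsim \sup_{j\in J(y)} a_j\,2^{j(s-d/p)}.
$$
Writing $b_j:=a_j 2^{j(s-d/p)}$, it now suffices to choose $b_j\to\infty$ with $\sum_j b_j^q j^{-q/p}<\infty$. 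This is possible precisely because $q>p$: for $q<\infty$ take $b_j=j^{1/p}(\log(j+2))^{-2/q}$, while for $q=\infty$ any $b_j\to\infty$ with $b_j=O(j^{1/p})$ works, e.g.\ $b_j=\log(j+1)$. Then $f\in B_{p,q}^s(\R^N)$ yet $\|f(\cdot,y)\|_{B_{p,\infty}^s(\R^d)}=\infty$ for a.e.\ $y$, proving the theorem.

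\emph{Main obstacle.} The delicate step is to convert the Littlewood--Paley lower bound into a quantitatively useful estimate on the slice: at each $j\in J(y)$, the isolated bump $a_j\phi_1(2^j\cdot)$ must produce a contribution of order $a_j 2^{j(s-d/p)}$ to the scale-$j$ term of $\|f(\cdot,y)\|_{B_{p,\infty}^s(\R^d)}$, uncontaminated by the contributions of adjacent scales. The annular Fourier support of $\phi_1$ together with the bounded-multiplicity geometric separation of the $y$-cubes is exactly what makes this transfer clean. Once it is secured, the remainder of the proof is a standard Borel--Cantelli covering argument and the parameter computation that exploits the strict inequality $p<q$.
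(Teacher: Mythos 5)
Your construction is a genuinely different route from the paper's proof. The paper works entirely on the physical side: a compactly supported quark $\psi$, a \emph{deterministic} rearrangement of a density-$1/j$ block of active positions in the $y$-variable (Lemma \ref{LE:CTR}), and a shift of the $x'$-profile to $x'=C_Mj$ so that different scales occupy pairwise disjoint regions, after which the slice lower bound follows from a finite-difference estimate (the lemma from \cite{JB}). You work on the Fourier side: annulus-localised $\widehat{\phi_1}$, all scales sharing $x$-support, frequency separation replacing spatial separation, a Littlewood--Paley lower bound in place of the finite-difference one, and a randomised Borel--Cantelli/Fubini argument replacing the explicit rearrangement. Both proofs run on the same arithmetic: the proportion $\sim 1/j$ of active $y$-cubes at scale $j$ has a divergent sum (so a.e.\ $y$ is hit infinitely often) yet its $\ell^{q/p}$-norm is summable (so $f\in B_{p,q}^s$), and this is precisely where $p<q$ enters. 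The randomisation is a legitimate simplification over the explicit packing of Lemma \ref{LE:CTR}; the paper's version has the advantage of also yielding the sharper weighted statements (Theorem \ref{TH:WEIGHT33}) by tuning the packing density.

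Two points need repair before the argument is complete. First, the upper bound cannot be read off the $\R^N$ Littlewood--Paley characterisation as stated, because $\widehat{\phi_2}$ is \emph{not} compactly supported (and cannot be, since $\phi_2\equiv1$ on a cube); hence $\widehat{f_j}$ leaks into $\R^N$-annuli with $k>j$. You either have to track the Schwartz decay of $\widehat{\phi_2}$ and absorb the off-diagonal terms into the $\ell^q$-sum, or, more simply, bound $\|f\|_{B_{p,q}^s(\R^N)}$ directly via finite differences, using $\|\Delta_h^M f_j\|_{L^p}\lesssim\min\{1,(2^j|h|)^M\}\|f_j\|_{L^p}$. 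Second, the slice lower bound is not automatic: with $\widehat{\phi_1}$ supported in $\{1/2\le|\xi|\le2\}$, the dilates at scales $j$ and $j\pm1$ have overlapping frequency supports, so a Littlewood--Paley block of $f(\cdot,y)$ mixes adjacent scales and you cannot a priori exclude cancellation. Restricting the construction to a sparse set of scales (say $j\in3\N$), or shrinking the support of $\widehat{\phi_1}$ so that the annuli at distinct scales are disjoint, makes $\Delta_j f(\cdot,y)$ exactly the scale-$j$ term; the Borel--Cantelli step is unaffected since $\sum_{j\in3\N}1/j$ still diverges. With these two adjustments the proof is sound.
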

Note that this is actually stronger than what we initially asked for, since $B_{p,q}^s\hookrightarrow B_{p,\infty}^s$.
\begin{remark}
We were informed that, concomitant to our work, a version of Theorem \ref{THEOREM} for $N=2$ and $p\geq1$ was proved by Mironescu, Russ and Sire in \cite{MRS}. We present another proof independent of it with different techniques. In fact, we will even prove a generalized version of Theorem \ref{THEOREM} that incorporates other related function spaces (see Theorem \ref{THEOREM2}) which is of independent interest.
\end{remark}
Despite the negative conclusion of Theorem \ref{THEOREM}, one may ask if something weaker than Fact \ref{FACT1} still holds when $p<q$. For example, by standard embeddings, we know that
$$ B_{p,q}^s(\R^N)\hookrightarrow A^{s,p}(\R^N) \qquad{\mbox{ for any }}~~0<q<\infty, $$
where $A^{s,p}(\R^N)$ stands for respectively
\begin{align}
C^{s-\frac{N}{p}}(\R^N),\,\,\,\mathrm{BMO}(\R^N)\,\,\,\text{and }\,L^{\frac{Np}{N-sp},\infty}(\R^N), \label{Asp}
\end{align}
when respectively $sp>N$, $sp=N$ and $sp<N$ (see Subsection \ref{SU:spEM}). In particular, we may infer from Fact \ref{FACT1} that if $q\leq p$, then for every $f\in B_{p,q}^s(\R^N)$ it holds
$$ f(\cdot,y)\in A^{s,p}(\R^d) \qquad{\mbox{ for a.e. }}~~y\in\R^{N-d}.$$

It is tempting to ask wether the same is true when $p<q$. But, as it turns out, even this fails to hold. This is the content of our next result.
\begin{theo}\label{TH:BMO}
Let $N\geq2$, $1\leq d<N$, $0<p<q\leq\infty$ and let $s>\sigma_p$. Then, there exists a function $f\in B_{p,q}^s(\R^N)$ such that %$f(\cdot,y)\notin
\begin{align*}
f(\cdot,y)\notin A^{s,p}(\R^d) \qquad{\mbox{ for a.e. }}~~y\in\R^{N-d}.
\end{align*}
%Same conclusion if $sp>d$ (resp. $sp<d$) with $C^{s-\frac{d}{p}}$ (resp. $L^{p, \frac{dp}{d-sp}}$) in place of $\mathrm{BMO}$.
\end{theo}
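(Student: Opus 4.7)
The plan is to refine the construction of Theorem \ref{THEOREM} so that each restriction fails to lie even in the larger space $A^{s,p}(\mathbb{R}^d)$ rather than merely $B_{p,\infty}^{s}(\mathbb{R}^d)$; this forces a much tighter calibration of amplitudes, right at the edge of what $B_{p,q}^{s}$-membership allows. The argument naturally splits into the three regimes $sp<d$, $sp=d$ and $sp>d$, corresponding to the three definitions of $A^{s,p}$ in \eqref{Asp}.

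First, I would fix a countable dense set $\{y_n\}\subset\mathbb{R}^{N-d}$, smooth compactly supported bumps $\psi\in\mathcal{C}^\infty_c(\mathbb{R}^d)$ and $\varphi\in\mathcal{C}^\infty_c(\mathbb{R}^{N-d})$ (both equal to $1$ on a neighbourhood of the origin), and consider a lacunary tensor series
$$ f(x,y)=\sum_{n\geq 1}\sum_{k\geq 1}a_{n,k}\,\psi\bigl(2^{k}(x-x_{n,k})\bigr)\,\varphi\bigl(2^{k}(y-y_n)\bigr), $$
with centres $x_{n,k}\in\mathbb{R}^d$ chosen so that the building blocks at each fixed dyadic scale $k$ have pairwise disjoint supports. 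The atomic characterisation of $B_{p,q}^{s}$ then controls $\|f\|_{B_{p,q}^{s}(\mathbb{R}^N)}$ by a mixed $\ell^q_k(\ell^p_n)$-norm of the renormalised amplitudes, so membership in $B_{p,q}^{s}$ is governed by an inner $\ell^p$-sum over $n$ nested inside an outer $\ell^q$-sum over $k$.

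Next, for any $y\in\mathbb{R}^{N-d}$ the restriction $f(\cdot,y)$ is the sum over the \emph{active} indices $\{(n,k):|y-y_n|\lesssim 2^{-k}\}$ of the bumps $a_{n,k}\,\psi(2^{k}(\cdot-x_{n,k}))$, which remain disjointly supported when the $x_{n,k}$ are well separated. A Borel--Cantelli-type argument using the density of $\{y_n\}$ ensures that for a.e.\ $y$ there are arbitrarily large scales $k$ with at least one active index. The amplitudes are then calibrated regime by regime via the scaling laws
$$ \|a\,\psi(2^k\cdot)\|_{L^{p^*,\infty}(\mathbb{R}^d)}\simeq a\cdot 2^{-k(d-sp)/p}\qquad(sp<d,\ p^*=dp/(d-sp)), $$
$$ \|a\,\psi(2^k\cdot)\|_{\mathrm{BMO}(\mathbb{R}^d)}\simeq a\qquad(sp=d), $$
$$ [a\,\psi(2^k\cdot)]_{C^{s-d/p}(\mathbb{R}^d)}\simeq a\cdot 2^{k(sp-d)/p}\qquad(sp>d), $$
by taking $a_{n,k}$ to grow just fast enough along a subsequence of scales that the corresponding $A^{s,p}$-quasi-norm of the sum of bumps seen by $f(\cdot,y)$ diverges. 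The gap between what is permitted by $B_{p,q}^{s}$-membership and what is required to force $A^{s,p}$-failure is opened precisely by the hypothesis $p<q$: a sequence can be $\ell^q$-small while violating an $\ell^p$-type bound at isolated scales, and that slack is exactly what drives the construction.

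The hardest step is the critical case $sp=d$, where the $\mathrm{BMO}$ seminorm of $\psi(2^k\cdot)$ is scale-invariant so that the dyadic decay available in the other two regimes disappears; there one is forced into a direct mean-oscillation computation for the superposition of lacunary bumps, or a John--Nirenberg-type argument, to guarantee the failure of the $\mathrm{BMO}$ condition on each relevant hyperplane. A secondary difficulty is to upgrade ``bad on a positive-measure set'' to ``bad on a full-measure set'' in $y$: this should be handled by exploiting the translation invariance of the construction (re-centring the pattern about a dense net of rational translates in $\mathbb{R}^{N-d}$) together with a zero-one law, exactly as in the proof of Theorem \ref{THEOREM}.
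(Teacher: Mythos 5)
Your plan identifies the right general strategy: disjointly supported bumps whose amplitudes exploit the $\ell^p$-versus-$\ell^q$ slack opened by $p<q$, a three-way case analysis according to $sp<d$, $sp=d$, $sp>d$, and an upgrade from $[1,2]$ to full measure via translates. However, there is a genuine gap at the crucial combinatorial step, and your plan also does more work than the paper needs.

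The gap: you write that ``a Borel--Cantelli-type argument using the density of $\{y_n\}$ ensures that for a.e.\ $y$ there are arbitrarily large scales $k$ with at least one active index.'' This does not hold as stated. The first Borel--Cantelli lemma runs in the opposite direction (summable measures give $\limsup$ of measure zero), and the second requires independence of events, which the balls $B(y_n,C2^{-k})$ do not have; it is easy to make a dense set of centres and radii $\sim 2^{-k}$ with divergent volume sum whose $\limsup$ still has measure zero. What is needed is to make the bumps \emph{sweep} across $[1,2)$ in a controlled way while keeping the inner $\ell^p$-sum bounded and only letting the outer $\ell^q$-sum slack permit the amplitudes to grow. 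This is precisely what Lemma~\ref{LE:CTR} and Corollary~\ref{COR:CTR2} achieve with the explicit shifting construction: at scale $j$ the nonzero amplitudes occupy only a $1/j$-fraction of $[\![2^j,2^{j+1})\!]$ with value $j$, and the blocks are shifted so that their dyadic shadows tile $[1,2)$ over infinitely many $j$, forcing $\sup_j 2^{j/p}\lambda_{j,\lfloor 2^j x\rfloor}=\infty$ for \emph{every} $x\in[1,2)$, not just a.e.\ $x$. If you try the naive ``cover $[1,2)$ at each scale $j$'' choice instead, you need $\sim 2^j$ bumps per scale and the $b_{p,q}$-constraint then forces the amplitudes to decay, killing the divergence; the shifting scheme is the nontrivial way out.

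A secondary point of comparison: the paper does not build a new function for this theorem. It reuses the \emph{same} $f$ already constructed for Theorem~\ref{THEOREM2} (with $\Psi\equiv1$) and simply verifies the lower bound in each of the three regimes. Because of the $(s,p)$-quark normalisation $2^{-j(s-N/p)}$, the three scaling laws you write down all conspire to produce the identical quantity $2^{j/p}\lambda_{j,\lfloor 2^j x_2\rfloor}$ as a lower bound for the $C^{s-d/p}$, $\mathrm{BMO}$, and $L^{p^\ast,\infty}$ quasi-norms respectively, so one sequence suffices; you do not need to recalibrate $a_{n,k}$ regime by regime. Also, the critical case $sp=d$ is not a John--Nirenberg argument in the paper: it is a direct mean-oscillation computation over the ball of radius $2^{-k}$ centred at the bump $C_Mk$, which is short because the bumps are disjointly supported. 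Finally, the upgrade to a.e.\ $y\in\R^{N-d}$ is done by the concrete weighted sum $g=\sum_l 2^{-|l|}f(\cdot,\cdot+l)$ together with the $\eta$-triangle inequality, not by a probabilistic zero-one law.
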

It is nonetheless possible to refine the conclusions of Fact \ref{FACT2} and Theorem \ref{THEOREM}. We find that a natural way to characterize such restrictions is to look at a more general scale of functions known as \emph{Besov spaces of generalized smoothness}, denoted by $B_{p,q}^{(s,\Psi)}(\R^N)$ (see Definition \ref{G:BESOV}). This type of spaces was first introduced by the Russian school in the mid-seventies (see e.g. \cite{Kalyabin,Kudry,Lizorkin}) and was shown to be useful in various problems ranging from Black-Scholes equations \cite{Schneider} to the study of pseudo-differential operators \cite{Almeida,Jacob,Leopold1,Leopold}. A comprehensive state of art covering both old and recent material can be found in \cite{Farkas}. Several versions of these spaces were studied in the literature, from different points of view and different degrees of generality. We choose to follow the point of view initiated by Edmunds and Triebel in \cite{Edmunds} (see also \cite{Cobos,ET,LeopoldF,Mou,Trieb}), which seems better suited to our purposes. Here, $s$ remains the dominant smoothness parameter and $\Psi$ is a positive function of log-type called \emph{admissible} (see Definition \ref{DE:adm}). That admissible function is a finer tuning that allows encoding more general types of smoothness. The simplest example is the function $\Psi\equiv1$ for which one has $B_{p,q}^{(s,\Psi)}(\R^N)=B_{p,q}^s(\R^N)$.

More generally, the spaces $B_{p,q}^{(s,\Psi)}(\R^N)$ are intercalated scales between $B_{p,q}^{s-\eps}(\R^N)$ and $B_{p,q}^{s+\eps}(\R^N)$. For example: if $\Psi$ is increasing, then we have
$$ B_{p,q}^s(\R^N)\hookrightarrow B_{p,q}^{(s,\Psi)}(\R^N)\hookrightarrow B_{p,q}^{s'}(\R^N) \qquad{\mbox{ for every }}~~s'<s, $$
see \cite[Proposition 1.9(vi)]{Mou}.

We prove that restrictions of Besov functions to almost every hyperplanes belong to $B_{p,q}^{(s,\Psi)}(\R^d)$, whenever $\Psi$ satisfies the following growth condition
\begin{align}
\sum_{j\geq0}\Psi(2^{-j})^\chi<\infty, \label{H:WEIGHT}
\end{align}
with $\chi=\frac{qp}{q-p}$ (resp. $\chi=p$ if $q=\infty$).

More precisely, we prove the following
\begin{theo}\label{TH:WEIGHT}
Let $N\geq2$, $1\leq d<N$, $0<p<q\leq\infty$, $s>\sigma_p$ and let $\Psi$ be an admissible function satisfying \eqref{H:WEIGHT}.
Suppose that $f\in B_{p,q}^s(\R^N)$. Then,
$$ f(\cdot,y)\in B_{p,q}^{(s,\Psi)}(\R^d) \qquad{\mbox{ for a.e. }}~~y\in \R^{N-d}. $$
\end{theo}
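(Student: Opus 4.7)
The plan is to combine a Littlewood--Paley decomposition of $f$ with the subadditivity $(\sum_k a_k)^{p/q}\leq\sum_k a_k^{p/q}$ (available because $p<q$), reducing the problem to a Fubini-then-H\"older calculation.

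Fix an integer $M>s$ and write $f=\sum_{k\geq0}f_k$ for a Littlewood--Paley decomposition on $\mathbb{R}^N$, with $\widehat{f_k}$ supported in $\{|\xi|\sim 2^k\}$. For each $y\in\mathbb{R}^{N-d}$ the slice $f_k(\cdot,y)$ is band-limited in $\mathbb{R}^d$ at frequency $\lesssim 2^k$, so a Nikolskii-type estimate together with the trivial bound yields
\[\omega_M\!\bigl(f_k(\cdot,y),2^{-j}\bigr)_{L^p(\mathbb{R}^d)}\leq C\min\!\bigl(1,2^{(k-j)M}\bigr)\,\|f_k(\cdot,y)\|_{L^p(\mathbb{R}^d)}.\]
Summing in $k$ (by Minkowski if $p\geq 1$, $p$-subadditivity if $p<1$), multiplying by $2^{js}$, and setting $\gamma_k(y):=2^{ks}\|f_k(\cdot,y)\|_{L^p(\mathbb{R}^d)}$, I obtain the pointwise-in-$y$ discrete convolution estimate
\[2^{js}\omega_M\!\bigl(f(\cdot,y),2^{-j}\bigr)_{L^p(\mathbb{R}^d)}\leq C\sum_{k\geq 0}b_{j-k}\,\gamma_k(y),\qquad b_\ell:=\min\!\bigl(2^{\ell s},2^{-\ell(M-s)}\bigr)\in\ell^1(\mathbb{Z}),\]
provided $M$ is large enough. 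Inserting this into a modulus-based characterization of $B_{p,q}^{(s,\Psi)}(\mathbb{R}^d)$, raising to the $q$-th power with Young/Jensen in $k$, and using the admissibility of $\Psi$ (so that $\sum_j\Psi(2^{-j})^q b_{j-k}\lesssim\Psi(2^{-k})^q$ provided the exponential decay of $b$ dominates the slow growth of $\Psi$) reduces the theorem to showing
\[T(y):=\sum_{k\geq0}\Psi(2^{-k})^q\,\gamma_k(y)^q<\infty\quad\text{for a.e. }y\in\mathbb{R}^{N-d}.\]

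Since $p/q\in(0,1)$ when $q<\infty$, the subadditivity inequality gives $T(y)^{p/q}\leq\sum_k\Psi(2^{-k})^p\gamma_k(y)^p$, and Fubini ($\int\|f_k(\cdot,y)\|_{L^p(\mathbb{R}^d)}^p\,dy=\|f_k\|_{L^p(\mathbb{R}^N)}^p$) converts this integrated bound into $\sum_k\Psi(2^{-k})^p\delta_k^p$, with $\delta_k:=2^{ks}\|f_k\|_{L^p(\mathbb{R}^N)}$. H\"older's inequality with conjugate exponents $q/p$ and $q/(q-p)$ gives
\[\sum_{k\geq 0}\Psi(2^{-k})^p\delta_k^p\leq\Bigl(\sum_{k\geq 0}\Psi(2^{-k})^{\chi}\Bigr)^{(q-p)/q}\Bigl(\sum_{k\geq 0}\delta_k^q\Bigr)^{p/q},\]
with $\chi=pq/(q-p)$; by \eqref{H:WEIGHT} and $f\in B_{p,q}^s(\mathbb{R}^N)$ both factors are finite, so $\int T(y)^{p/q}\,dy<\infty$ and therefore $T<\infty$ a.e. The case $q=\infty$ is parallel: replace the $\ell^q$-sum in $j$ by a supremum, use $\sup_k\leq(\sum_k(\cdot)^p)^{1/p}$ in place of the subadditivity step, and conclude from \eqref{H:WEIGHT} with $\chi=p$ applied to $(\delta_k)\in\ell^\infty$.

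The main obstacle is precisely the conversion from $q$-th to $p$-th powers that allows Fubini: it is cheap exactly when $p\leq q$, and the dual H\"older exponent $\chi=pq/(q-p)$ is the reason \eqref{H:WEIGHT} is phrased with this particular weight. Without the $\Psi$-factor one would be trying to bound $\sum_k\delta_k^p$ by $(\sum_k\delta_k^q)^{p/q}$, which fails in general for $p<q$---matching the counterexample of Theorem~\ref{THEOREM}.
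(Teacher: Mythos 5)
Your proof is correct and reaches the conclusion by a genuinely different technical route than the paper, although the two arguments share the same structural skeleton. The paper works entirely within the subatomic (quarkonial) decomposition framework: Theorem~\ref{TH:WEIGHT} is obtained from Theorem~\ref{TH3:WEIGHT} (with $r=p$, $\Phi\equiv 1$), whose proof first uses H\"older's inequality with exponents $q/p$ and $q/(q-p)$ to establish the embedding $B_{p,q}^{s}(\R^N)\hookrightarrow B_{p,p}^{(s,\Psi)}(\R^N)$ under~\eqref{H:WEIGHT}, and then applies Proposition~\ref{caracBesov2} (the $q\leq p$ restriction result), which is a Fubini argument carried out at the level of subatomic coefficients via Lemma~\ref{dommm} and the integral identity of Lemma~\ref{ell1ekiv}. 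You instead perform a Littlewood--Paley decomposition, pass to slices via a Nikolskii/Bernstein estimate for band-limited functions, and run Fubini directly on the dyadic pieces $\|f_k(\cdot,y)\|_{L^p}$, applying H\"older \emph{after} Fubini rather than before. The pivotal ingredients are identical --- H\"older with dual exponent $\chi=pq/(q-p)$ to exploit $p<q$, plus Fubini on the $L^p$-level --- and both deliver the stronger quantitative output $\bigl(\int\|f(\cdot,y)\|_{B_{p,q}^{(s,\Psi)}(\R^d)}^p\,dy\bigr)^{1/p}\lesssim\|f\|_{B_{p,q}^{s}(\R^N)}$. What the paper's route buys, beyond Theorem~\ref{TH:WEIGHT} itself, is the companion Theorem~\ref{TH2:WEIGHT}, where a Borel--Cantelli argument (Lemma~\ref{LE:tech}) yields an a.e.-\emph{uniform} bound over compact sets; your Littlewood--Paley route is lighter in machinery and makes the role of $\chi$ more transparent, but does not directly give that uniform control. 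Two small points worth recording for completeness: the ``Young/Jensen in $k$'' step requires observing that admissibility forces $\Psi(2^{-j})/\Psi(2^{-k})\lesssim 2^{\varepsilon|j-k|}$ for every $\varepsilon>0$, so the resulting kernel $\tilde b_\ell:=2^{\varepsilon|\ell|}b_\ell$ remains in $\ell^{\min(1,q)}(\Z)$ when $M$ is large; and the Nikolskii bound for the slices must be taken in its $p<1$ form when needed, which is standard for band-limited functions but should be cited.
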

%Otherwise put, despite a loss of regularity with respect to the original function of (at most) logarithmic order and precisely quantified by \eqref{H:WEIGHT}, $s$ remains the leading smoothness of $f(\cdot,y)$.
It turns out the condition \eqref{H:WEIGHT} on $\Psi$ in Theorem \ref{TH:WEIGHT} is optimal, at least when $q=\infty$. In other words, we obtain a sharp characterization of the aforementioned loss of regularity.
%\begin{theo}\label{TH:WEIGHT22}
%Let $N\geq2$, $1\leq d<N$, $0<p<\infty$, $s>\sigma_p$ and let $\Psi$ be an admissible function. Suppose that $\Psi$ does not satisfy \eqref{H:WEIGHT}. Then, there exists a function $f\in B_{p,\infty}^s(\R^N)$ such that
%$$ f(\cdot,y)\notin B_{p,\infty}^{(s,\Psi)}(\R^d) \qquad{\mbox{ for a.e. }}~~y\in\R^{N-d}. $$
%\end{theo}
%When $q<\infty$, we have a different result which suggest that the smaller $p$ is and the larger $q$ is, the more the smoothness of $f(\cdot,y)$ deteriorates.
\begin{theo}\label{TH:WEIGHT33}
Let $N\geq2$, $1\leq d<N$, $0<p<q\leq\infty$, $s>\sigma_p$ and let $\Psi$ be an admissible function that does not satisfy \eqref{H:WEIGHT}. If $q<\infty$ and $\Psi$ is increasing suppose, in addition, that
\begin{align}
\frac{qp}{q-p}<\frac{1}{c_\infty} \qquad{\mbox{ where }}~~c_\infty:=\sup_{0<t\leq1}\,\log_2\frac{\Psi(t)}{\Psi(t^2)}. \label{pqH}
\end{align}
Then, there is a function $f\in B_{p,q}^s(\R^N)$ such that
$$ f(\cdot,y)\notin B_{p,q}^{(s,\Psi)}(\R^d) \qquad{\mbox{ for a.e. }}~~y\in\R^{N-d}. $$
\end{theo}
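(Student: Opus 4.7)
The plan is to mimic and refine the counterexample construction of Theorem~\ref{THEOREM} (which is the $\Psi\equiv1$ case), inserting $\Psi$-dependent weights into both the amplitudes and the spatial densities of a random superposition of Besov atoms. I would rely on the atomic/wavelet characterization of $B_{p,q}^s(\R^N)$ and of $B_{p,q}^{(s,\Psi)}(\R^d)$ developed earlier in the paper, in which the generalized smoothness enters as a multiplicative weight $\Psi(2^{-j})$ on the $L^p$-norm of the $j$-th Littlewood--Paley block. Throughout, write $\R^N=\R^d_x\times\R^{N-d}_y$.

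Concretely, I would set $f=\sum_{j\geq j_0}\lambda_j\sum_{k\in K_j}\varphi_j(\cdot-z_{j,k})$, where $\varphi_j$ is a smooth Besov atom at dyadic scale $2^{-j}$ and $\{z_{j,k}\}_{k\in K_j}\subset\R^N$ consists of $M_j=|K_j|$ points whose transverse components are an independent random translate (across $j$) of a lattice tiling of the transverse unit cube. Set $m_j=M_j\,2^{-j(N-d)}$ (the transverse hit-probability for a fixed $y$) and $\alpha_j=2^{js}\lambda_j\,2^{-jd/p}$. By disjointness of supports within a scale, the condition $f\in B_{p,q}^s(\R^N)$ reduces to $\sum_j\alpha_j^q m_j^{q/p}<\infty$, while by independence of the randomization across scales and the Borel--Cantelli lemma, the failure $f(\cdot,y)\notin B_{p,q}^{(s,\Psi)}(\R^d)$ for a.e.\ $y$ follows from $\sum_j\alpha_j^q\Psi(2^{-j})^q m_j=\infty$ (respectively $\sup_j\alpha_j\Psi(2^{-j})\mathbf{1}_{\{y\text{ hit at scale }j\}}=\infty$ when $q=\infty$). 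Such sequences exist exactly when $\sum_j\Psi(2^{-j})^\chi=\infty$: for $q=\infty$ take $\alpha_j=\Psi(2^{-j})^{-1}$ and $m_j=\Psi(2^{-j})^p$, so that the $\sup$-failure reduces to the divergence of $\sum_j m_j$; for $q<\infty$ balancing the two conditions forces the common $\Psi$-power $q^2/(q-p)$, and the explicit choice $\alpha_j=\Psi(2^{-j})^{-1}$, $m_j=\Psi(2^{-j})^\chi/K_j^{p/(q-p)}$ with $K_j=S_j^{(q-p)/p}$ and $S_j=\sum_{k\leq j}\Psi(2^{-k})^\chi$ works: an Abel-summation argument then gives convergence of $\sum_j\Psi^\chi/K_j^{q/(q-p)}=\sum_j\Psi^\chi/S_j^{q/p}$ and divergence of $\sum_j\Psi^\chi/K_j^{p/(q-p)}=\sum_j\Psi^\chi/S_j$.

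The main obstacle I foresee is the $q<\infty$ case, and specifically the role of the hypothesis~\eqref{pqH}. The constant $c_\infty$ measures the maximal logarithmic rate of variation of $\Psi$ under dyadic squarings, so the bound $\chi<1/c_\infty$ says that $\Psi$ varies slowly enough for the atomic characterization of $B_{p,q}^{(s,\Psi)}(\R^d)$ to commute cleanly with the scale-by-scale decomposition of the random construction; without it, $\Psi(2^{-j})$ could change so abruptly between successive scales that contributions at nearby $j$'s entangle and the $\ell^q$-divergence of the restriction norm cannot be isolated. A secondary technicality is the passage from \emph{divergence in expectation} to \emph{divergence for a.e.\ $y$}, which follows from the independence of the transverse shifts across scales (making the events $\{y\text{ hit at scale }j\}$ independent) via the standard Borel--Cantelli/Kolmogorov zero-one law combined with Fubini to extract a single admissible realization of the randomized $f$. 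The remaining steps --- verifying atomic membership of the $\varphi_j$ with sufficient vanishing moments, the $L^p$-quasi-additivity of the disjoint bumps at each scale, and the comparison of the atomic Besov quasi-norm to the Fourier-analytic one in presence of $\Psi$ --- are of routine character, paralleling the proofs of Theorems~\ref{THEOREM} and~\ref{TH:BMO}.
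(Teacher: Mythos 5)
Your construction is genuinely different from the paper's, which builds a \emph{deterministic} ``sweeping'' rearrangement (Lemmas \ref{LE:CTR} and \ref{LE:CTR:WEIGHT3}): the nonzero entries of $\lambda_{j,\cdot}$ on the dyadic block $T_j=[\![2^j,2^{j+1}-1]\!]$ occupy a sub-interval of relative length $\tilde\gamma_j$ which slides across $[1,2)$ as $j$ grows, so that \emph{every} $x\in[1,2)$ gets hit infinitely often; your version replaces this by independent random transverse shifts at each scale and concludes via the second Borel--Cantelli lemma plus Fubini. The two are morally equivalent (both rest on $\sum_j\tilde\gamma_j=\infty$, resp. $\sum_j m_j=\infty$, together with Abel--Dini balancing of the amplitudes), and either can be made rigorous; the paper's sweeping has the modest advantage of producing divergence at every point of an interval rather than a.e., which it needs anyway before summing translates.

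Two remarks on the details. First, your statement that the failure of $f(\cdot,y)\in B_{p,q}^{(s,\Psi)}$ ``follows from $\sum_j\alpha_j^q\Psi(2^{-j})^q m_j=\infty$'' is not quite what Borel--Cantelli gives in general; it works here only because you normalize $\alpha_j\Psi(2^{-j})=1$, so that the $\ell^q$-divergence is literally $\#\{j:y\text{ hit at scale }j\}=\infty$ and this does reduce to $\sum_j m_j=\infty$ with independence. Second---and this is the genuine gap---your diagnosis of the role of hypothesis \eqref{pqH} is wrong. It has nothing to do with the atomic characterization ``commuting'' with the scale decomposition; in the paper it enters only inside Lemma \ref{LE:CTR:WEIGHT3}, as a crude device to prove $\sum_j\tau_j^{q/p}<\infty$ by bounding $\tilde\gamma_j\leq 1/(j+1)$ and $\beta_j^{-q/p}\lesssim j^{c_\infty q}$. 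But $\tau_j^{q/p}=\beta_j^{q/(q-p)}/S_j^{q/p}=\Psi(2^{-j})^\chi/S_j^{q/p}$ with $S_j=\sum_{k\leq j}\Psi(2^{-k})^\chi$, which is exactly your Abel--Dini sum and converges \emph{without} \eqref{pqH} whenever $q/p>1$ and $S_j\to\infty$. So your construction would in fact dispense with \eqref{pqH} altogether; you should recognize this rather than inventing a role for it, and you should certainly not rely on \eqref{pqH} to justify a step that does not need it.
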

\begin{remark}
Notice that condition \eqref{pqH} is sufficient and also not far from being necessary to ensure that \eqref{H:WEIGHT} does not hold, as it happens that for some particular choices of $\Psi$, \eqref{H:WEIGHT} is equivalent to $\frac{qp}{q-p}>\frac{1}{c_\infty}$.
\end{remark}

A fine consequence of Theorem \ref{TH:WEIGHT} is that it provides a substitute for $A^{s,p}(\R^d)$ when $p<q$ (in Theorem \ref{TH:BMO}), which could be of interest in some applications (see e.g. \cite{BBM,MRS}). For example, if $sp>d$, $p<q$ and \eqref{H:WEIGHT} is satisfied, then by Theorem \ref{TH:WEIGHT} and \cite[Proposition 3.4]{Caetano} we have
$$ \forall f\in B_{p,q}^s(\R^N),\,\,\,\,f(\cdot,y)\in C^{(s-\frac{d}{p},\Psi)}(\R^d) \qquad{\mbox{ for a.e. }}~~y\in\R^{N-d},$$
where $C^{(\alpha,\Psi)}(\R^d)$ is the generalized H\"older space $B_{\infty,\infty}^{(\alpha,\Psi)}(\R^d)$ (see Remark \ref{WEIGHT:EMB} below).
%If $sp<d$, we have a similar picture with a generalized version of the Lorentz spaces instead of $A^{s,p}(\R^d)$ (see \cite[Proposition 4.1]{Caetano}). When $sp=d$, the situation becomes more involved as it is not clear that $\dot{F}_{\infty,2}^0(\R^d)=\mathrm{BMO}(\R^d)$ can be generalized by the homogeneous space $\dot{F}_{\infty,2}^{(0,\Psi)}$, as there is currently no theory on this specific type of spaces.

\begin{remark}
It is actually possible to formulate Theorems \ref{TH:WEIGHT} and \ref{TH:WEIGHT33} in terms of the space $B_{p,q}^{w(\cdot)}(\R^d)$ introduced by Ansorena and Blasco in \cite{Ansorena2,Ansorena}, even though their results do not allow to handle higher orders $s\geq1$ and neither the case $0<p<1$ nor $0<q<1$. Nevertheless, this is merely another side of the same coin and we wish to avoid unnecessary complications. Beyond technical matters, our approach is motivated by the relevance of the scale $B_{p,q}^{(s,\Psi)}(\R^d)$ in physical problems and in fractal geometry (see e.g. \cite{Edmunds,ET,Mou,TriebSpectra,Trieb}).
\end{remark}

In the course of the paper we will also address the corresponding problem with $f\in B_{p,q}^{(s,\Psi)}(\R^N)$ instead of $f\in B_{p,q}^s(\R^N)$ which is of independent interest. In fact, as we will show, our techniques allow to extend Theorems \ref{THEOREM}, \ref{TH:WEIGHT} and \ref{TH:WEIGHT33} to this generalized setting with almost no modifications, see Theorems \ref{THEOREM2}, \ref{TH2:WEIGHT}, \ref{TH3:WEIGHT} and Remark \ref{RE:BGene}. \\

%In this paper, we will also... generalization... useful inequalities of independent interest.

The paper is organized as follows. In the forthcoming Section \ref{SE:notations} we recall some useful definitions and results related to Besov spaces. In Section \ref{SE:SUITE}, we give some preliminary results on sequences which will be needed for our purposes. In Section \ref{SE:ESTIMATE}, we establish some general estimates within the framework of subatomic decompositions and, in Section \ref{SE:FACILE}, we use these estimates to prove a generalization of Fact \ref{FACT1} which will be used to prove Theorem \ref{TH:WEIGHT}. In Section \ref{SE:DIFFICILE}, we prove at a stroke Theorems \ref{THEOREM} and \ref{TH:BMO} using the results collected at Section \ref{SE:SUITE}. Finally, in Section \ref{SE:WEIGHT}, we prove Theorems \ref{TH:WEIGHT} and \ref{TH:WEIGHT33}.

\section{Notations and definitions}\label{SE:notations}

For the convenience of the reader, we specify below some notations used all along this paper.

As usual, $\R$ denotes the set of all real numbers, $\mathbb{C}$ the set of all complex numbers and $\Z$ the collection of all integers. The set of all \emph{nonnegative} integers $\{0,1,2,...\}$ will be denoted by $\N$, and the set of all \emph{positive} integers $\{1,2,...\}$ will be denoted by $\N^*$.

The $N$-dimensional real Euclidian space will be denoted by $\R^N$. Similarly, $\N^N$ (resp. $\Z^N$) stands for the lattice of all points $m=(m_1,...,m_N)\in\R^N$ with $m_j\in\N$ (resp. $m_j\in\Z$).

Given a real number $x\in\R$ we denote by $\lfloor x\rfloor$ its integral part and by $x_+$ its positive part $\max\{0,x\}$. By analogy, we write $\R_+:=\{x_+:x\in\R\}$.

The cardinal of a discrete set $E\subset\Z$ will be denoted by $\# E$. Given two integers $a,b\in\Z$ with $a<b$ we denote by $[\![a,b]\!]$ the set of all integers belonging to the segment line $[a,b]$, namely
$$[\![a,b]\!]:=[a,b]\cap\Z.$$ %=\{a,a+1,...,b-1,b\}

We will sometimes make use of the approximatively-less-than symbol "$\lesssim$", that is we write $a\lesssim b$ for $a\leq C\,b$ where $C>0$ is a constant independent of $a$ and $b$. Similarly, $a\gtrsim b$ means that $b\lesssim a$. Also, we write $a\sim b$ whenever $a\lesssim b$ and $b\lesssim a$.

We will denote by $\mathcal{L}_N$ the $N$-dimensional Lebesgue measure and by $B_R$ the $N$-dimensional ball of radius $R>0$ centered at zero.

The characteristic function of a set $E\subset\R^N$ will be denoted by $\mathds{1}_E$.
%$$ a\sim b \quad{\mbox{ whenever }}~~a\lesssim b~~\text{ and }~~b\lesssim a.  $$

We recall that a \emph{quasi-norm} is similar to a norm in that it satisfies the norm axioms, except that the triangle inequality is replaced by
$$\|x+y\|\leq K(\|x\|+\|y\|), \hbox{ for some } K>0.$$
%for some $K>0$.

Given two quasi-normed spaces $(A,\left\|\cdot\right\|_A)$ and $(B,\left\|\cdot\right\|_B)$, we say that $A\hookrightarrow B$ when $A\subset B$ with continuous embedding, i.e. when
$$ \|f\|_B\lesssim \|f\|_A, \hbox{ for all } f\in A.$$

Further, we denote by $\ell^p(\N)$, $0<p<\infty$, the space of sequences $u=(u_j)_{j\geq0}$ such that
$$ \|u\|_{\ell^p(\N)}:=\bigg(\sum_{j\geq0}|u_j|^p\bigg)^{1/p}<\infty, $$
and by $\ell^\infty(\N)$ the space of bounded sequences.

As usual, $\mathcal{S}(\R^N)$ denotes the (Schwartz) space of rapidly decaying functions and $\mathcal{S}'(\R^N)$ its dual, the space of tempered distributions.

Given $0<p\leq\infty$, we denote by $L^p(\R^N)$ the space of measurable functions $f$ in $\R^N$ for which the $p$-th power of the absolute value is Lebesgue integrable (resp. $f$ is essentially bounded when $p=\infty$), endowed with the quasi-norm
$$ \|f\|_{L^p(\R^N)}:=\left(\int_{\R^N}|f(x)|^p\mathrm{d}x\right)^{1/p}, $$
(resp. the essential sup-norm when $p=\infty$).

We collect below the different representations of Besov spaces which will be in use in this paper.

\subsection{Classical Besov spaces}
Perhaps the simplest (and the most intuitive) way to define Besov spaces is through finite differences. This can be done as follows.

Let $f$ be a function in $\mathbb{R}^N$. Given $M\in\mathbb{N}^\ast$ and $h\in\mathbb{R}^N$, let
\[\Delta_h^Mf(x)=\sum_{j=0}^M(-1)^{M-j}\binom{M}{j}f(x+hj),\]
be the iterated difference operator.

%In particular,
%\begin{enumerate}
%\item $\Delta_h^M$ is a linear operator,
%\item $\Delta_h^1f(x)=f(x+h)-f(x)$,
%\item $\Delta_h^{M+1}f(x)=\Delta_h^1(\Delta_h^Mf)(x)$.
%\end{enumerate}
Within these notations, Besov spaces can be defined as follows.
\begin{de}\label{DEFBESOV}
Let $M\in\N^\ast$, $0<p,q\leq\infty$ and $s\in(0,M)$ with $s>\sigma_p$ where $\sigma_p$ is given by \eqref{sigmap}. The \emph{Besov space} $B_{p,q}^s(\mathbb{R}^N)$ consists of all functions $f\in L^p(\mathbb{R}^N)$ such that
\begin{align*}
[f]_{B_{p,q}^s(\mathbb{R}^N)}:=\left(\int_{|h|\leq1}\|\Delta_h^Mf\|_{L^p(\R^N)}^q\frac{\mathrm{d}h}{|h|^{N+sq}}\right)^{1/q}<\infty, %\label{snBesov}
\end{align*}
which, in the case $q=\infty$, is to be understood as
\[[f]_{B_{p,\infty}^s(\mathbb{R}^N)}:=\sup_{|h|\leq1}\frac{\|\Delta_h^Mf\|_{L^p(\mathbb{R}^N)}}{|h|^s}<\infty.\]
\end{de}
\noindent The space $B_{p,q}^s(\mathbb{R}^N)$ is naturally endowed with the quasi-norm
\begin{align}
\|f\|_{B_{p,q}^s(\mathbb{R}^N)}:=\|f\|_{L^p(\mathbb{R}^N)}+[f]_{B_{p,q}^s(\mathbb{R}^N)}. \label{NORM:FD}
\end{align}
\begin{remark}
Different choices of $M$ in \eqref{NORM:FD} yield equivalent quasi-norms.
\end{remark}
\begin{remark}
If $p,q\geq1$, then $\left\|\cdot\right\|_{B_{p,q}^s(\R^N)}$ is a norm. However, if either $0<p<1$ or $0<q<1$, then the triangle inequality is no longer satisfied and it is only a quasi-norm. Nevertheless, we have the following useful inequality
$$ \|f+g\|_{B_{p,q}^s(\R^N)}\leq \left(\|f\|_{B_{p,q}^s(\R^N)}^\eta+\|g\|_{B_{p,q}^s(\R^N)}^\eta\right)^{1/\eta}, $$
where $\eta:=\min\{1,p,q\}$, which compensates the absence of a triangle inequality.
\end{remark}
For our purposes, we shall require a more abstract apparatus which will be provided by the so-called \emph{subatomic} (or \emph{quarkonial}) \emph{decompositions}. This provides a way to decompose any $f\in B_{p,q}^s(\R^N)$ along elementary building blocks (essentially made up of a single function independent of $f$) and to, somehow, reduce it to a sequence of numbers (depending linearly on $f$). This type of decomposition first appeared in the monograph \cite{TriebSpectra} of Triebel and was further developed in \cite{Trieb} (see also \cite{Izuki,Knopova,Nakamura,Trieb06}). We outline below the basics of the theory.

Given $\nu\in\N$ and $m\in\Z^N$, we denote by $Q_{\nu,m}\subset\R^N$ the cube with sides parallel to the coordinate axis, centered at $2^{-\nu}m$ and with side-length $2^{-\nu}$.
\begin{de}\label{quarks}
Let $\psi\in C^\infty(\mathbb{R}^N)$ be a nonnegative function with
\[\mathrm{supp}(\psi)\subset\{y\in\mathbb{R}^N:|y|<2^r\},\]
for some $r\geq0$ and
\begin{align*}
\sum_{k\in\mathbb{Z}^N}\psi(x-k)=1 \qquad{\mbox{ for any }}~~x\in\R^N. %\label{resolution}
\end{align*}
Let $s>0$, $0<p\leq\infty$, $\beta\in\mathbb{N}^N$ and $\psi^\beta(x)=x^\beta\psi(x)$. Then, for $\nu\in\mathbb{N}$ and $m\in\mathbb{Z}^N$, the function
\begin{align}
(\beta \mathrm{qu})_{\nu,m}(x):=2^{-\nu(s-\frac{N}{p})}\psi^\beta(2^\nu x-m) \qquad{\mbox{ for }}~~x\in\mathbb{R}^N, \label{C:quark}
\end{align}
is called an $(s,p)$-$\beta$-quark relative to the cube $Q_{\nu,m}$.
\end{de}
\begin{remark}\label{infiniQuark}
When $p=\infty$, \eqref{C:quark} means $(\beta \mathrm{qu})_{\nu,m}(x):=2^{-\nu s}\psi^\beta(2^\nu x-m)$.
\end{remark}

\begin{de}\label{bpqsequence}
Given $0<p,q\leq\infty$, we define $b_{p,q}$ as the space of sequences $\lambda=(\lambda_{\nu,m})_{\nu\geq0,m\in\mathbb{Z}^N}$ such that
\[\|\lambda\|_{b_{p,q}}:=\bigg(\sum_{\nu=0}^{\infty}\bigg(\sum_{m\in\mathbb{Z}^N}|\lambda_{\nu,m}|^p\bigg)^{q/p}\bigg)^{1/q}<\infty.\]
\end{de}
\noindent For the sake of convenience we will make use of the following notations
\begin{align*}
%\lambda&:=\{\lambda_{\nu,m}^\beta\in\mathbb{C}:(\nu,m,\beta)\in\mathbb{N}\times\mathbb{Z}^N\times\mathbb{N}^N\}, \\
\lambda=\{\lambda^\beta:\beta\in\N^N\} \quad\text{with}\quad \lambda^\beta&=\{\lambda_{\nu,m}^\beta\in\mathbb{C}:(\nu,m)\in\mathbb{N}\times\mathbb{Z}^N\}.
\end{align*}
Then, we have the
\begin{theo}\label{BspqQuark}
Let $0<p,q\leq\infty$, $s>\sigma_p$ and $(\beta \mathrm{qu})_{\nu,m}$ be $(s,p)$-$\beta$-quarks according to Definition \ref{quarks}. Let $\varrho>r$ (where $r$ has the same meaning as in Definition \ref{quarks}). Then, $B_{p,q}^s(\mathbb{R}^N)$ coincides with the collection of all $f\in\mathscr{S}'(\mathbb{R}^N)$ which can be represented as
\begin{align}
f(x)=\sum_{\beta\in\mathbb{N}^N}\sum_{\nu=0}^{\infty}\sum_{m\in\mathbb{Z}^N}\lambda_{\nu,m}^\beta(\beta \mathrm{qu})_{\nu,m}(x), \label{dekompp}
\end{align}
where $\lambda^\beta\in b_{p,q}$ is a sequence such that
\begin{align*}
\|\lambda\|_{b_{p,q},\varrho}:=\sup_{\beta\in\mathbb{N}^N}2^{\varrho|\beta|}\|\lambda^\beta\|_{b_{p,q}}<\infty. %\label{deKK}
\end{align*}
Moreover,
\begin{align}
\|f\|_{B_{p,q}^s(\mathbb{R}^N)}\sim \inf_{\eqref{dekompp}}\|\lambda\|_{b_{p,q},\varrho}\hspace{0.1em}, \label{ekiv55}
\end{align}
where the infimum is taken over all admissible representations \eqref{dekompp}. In addition, the right hand side of \eqref{ekiv55} is independent of the choice of $\psi$ and $\varrho>r$.
\end{theo}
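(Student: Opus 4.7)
The plan is to prove the statement in two directions: a \emph{synthesis} bound showing that every admissible quarkonial series converges in $B_{p,q}^s(\R^N)$ with quasi-norm controlled by $\|\lambda\|_{b_{p,q},\varrho}$, and an \emph{analysis} construction producing such a representation from any given $f\in B_{p,q}^s(\R^N)$ with bounded coefficients. Norm equivalence and independence of $\psi$, $\varrho$ will then follow from the two-sided estimate.

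For synthesis, fix $\beta\in\N^N$ and regard each $(\beta\mathrm{qu})_{\nu,m}$, up to the normalising factor $2^{-\nu(s-N/p)}$, as an atom supported in a cube concentric with $Q_{\nu,m}$ of side-length $\sim 2^{-\nu+r}$, with derivative bounds $\|\partial^\alpha\psi^\beta\|_\infty\lesssim C^{|\alpha|+|\beta|}$ coming from the compact support and smoothness of $\psi$. Since $s>\sigma_p$, no moment conditions are needed, so the classical atomic decomposition theorem for Besov spaces yields
\begin{align*}
\bigg\|\sum_{\nu,m}\lambda_{\nu,m}^\beta(\beta\mathrm{qu})_{\nu,m}\bigg\|_{B_{p,q}^s(\R^N)}\lesssim C^{|\beta|}\,\|\lambda^\beta\|_{b_{p,q}}.
\end{align*}
Summing over $\beta$ via the $\eta$-triangle inequality ($\eta=\min\{1,p,q\}$) and using $\|\lambda^\beta\|_{b_{p,q}}\leq 2^{-\varrho|\beta|}\|\lambda\|_{b_{p,q},\varrho}$ produces a geometric series in $|\beta|$ that converges once $2^\varrho>C$; the hypothesis $\varrho>r$ can always be reinforced to meet this threshold.

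For analysis, take a standard dyadic Littlewood--Paley decomposition $f=\sum_{\nu\geq0}f_\nu$, so that each $f_\nu:=\varphi_\nu(D)f$ extends by Paley--Wiener to an entire function of exponential type $\lesssim 2^\nu$. Using the partition of unity $\sum_m\psi(2^\nu x-m)=1$ and Taylor expanding $f_\nu$ around the centres $2^{-\nu}m$, I would identify the candidate coefficients
\begin{align*}
\lambda_{\nu,m}^\beta=\frac{2^{\nu(s-N/p)-\nu|\beta|}}{\beta!}\,\partial^\beta f_\nu(2^{-\nu}m).
\end{align*}
Bernstein's inequality together with a Plancherel--P\'olya sampling estimate for entire functions of exponential type give $\sum_m|\partial^\beta f_\nu(2^{-\nu}m)|^p\lesssim (c\,2^\nu)^{p|\beta|}\cdot 2^{\nu N}\|f_\nu\|_p^p$, and hence $\|\lambda^\beta\|_{b_{p,q}}\lesssim c^{|\beta|}/\beta!\cdot\|f\|_{B_{p,q}^s(\R^N)}$. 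The factorial decay $1/\beta!$ absorbs any weight $2^{\varrho|\beta|}$, so that $\|\lambda\|_{b_{p,q},\varrho}\lesssim\|f\|_{B_{p,q}^s(\R^N)}$ for every $\varrho>r$.

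The main obstacle will be controlling the $\beta$-dependence in both directions simultaneously: in synthesis the quarks delocalise and their derivatives grow with $|\beta|$, so the weight $2^{\varrho|\beta|}$ must overpower the atomic constant $C^{|\beta|}$; in analysis one must justify the interchange of Taylor expansion and Littlewood--Paley sum and show that the Taylor series of $f_\nu$ converges \emph{globally} on $\R^N$, which crucially exploits the entirety of $f_\nu$. Independence of the decomposition from $\psi$ and from the specific $\varrho>r$ will be automatic once existence of \emph{some} admissible pair is established, since composing the synthesis bound of one setup with the analysis construction of another shows that the resulting quasi-norms are comparable.
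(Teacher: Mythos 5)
The paper itself does not prove Theorem \ref{BspqQuark}; it refers the reader to \cite[Section 14.15]{TriebSpectra} and \cite[Theorem 2.9]{Trieb}. Your outline --- Littlewood--Paley plus Taylor expansion around the cube centres for the analysis direction, reduction to the classical atomic decomposition theorem for the synthesis direction --- is precisely Triebel's scheme, and your identification of the candidate coefficients $\lambda_{\nu,m}^\beta$ and the Plancherel--P\'olya/Bernstein bookkeeping giving $\|\lambda^\beta\|_{b_{p,q}}\lesssim c^{|\beta|}/\beta!\cdot\|f\|_{B_{p,q}^s(\R^N)}$ are sound.

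There is, however, a genuine gap in the synthesis direction. You write that the sum over $\beta$ converges ``once $2^\varrho>C$'' and that ``the hypothesis $\varrho>r$ can always be reinforced to meet this threshold.'' But the theorem asserts the equivalence for \emph{every} $\varrho>r$, not merely for $\varrho$ large, and the independence argument you invoke at the end cannot recover the small-$\varrho$ case. Independence needs both the synthesis and analysis bounds in each setup: if synthesis fails for some $r<\varrho_0<\log_2 C$, the analysis side alone only gives $\inf\|\lambda\|_{b_{p,q},\varrho_0}\lesssim\|f\|_{B_{p,q}^s(\R^N)}$, with no reverse inequality; and the trivial monotonicity $\|\lambda\|_{b_{p,q},\varrho_0}\leq\|\lambda\|_{b_{p,q},\varrho_1}$ for $\varrho_0<\varrho_1$ transports the synthesis bound in the wrong direction. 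What is missing is the explicit control of the atomic constant: since $\mathrm{supp}\,\psi\subset\{|y|<2^r\}$, on this set $|y^\beta|\leq 2^{r|\beta|}$, and Leibniz's rule gives $\|\partial^\alpha(\psi^\beta)\|_\infty\leq P_\alpha(|\beta|)\,2^{r|\beta|}$ with $P_\alpha$ a polynomial depending only on $\alpha$ and $\psi$. Hence each quark is, up to a factor with only polynomial growth in $|\beta|$, the multiple $2^{r|\beta|}$ of a normalised $(s,p)_K$-atom, and the geometric sum over $\beta$ converges for every $\varrho>r$. With that estimate in place of the vague $C^{|\beta|}$, your plan matches the cited proof.
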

An elegant proof of this result can be found in \cite[Section 14.15, pp.101-104]{TriebSpectra} (see also \cite[Theorem 2.9, p.15]{Trieb}).
\begin{remark}\label{optdecsub}
It is known that, given $f\in B_{p,q}^s(\mathbb{R}^N)$ and a fixed $\varrho>r$, there is a decomposition $\lambda_{\nu,m}^\beta$ (depending on the choice of $(\beta\mathrm{qu})_{\nu,m}$ and $\varrho$) realizing the infimum in \eqref{ekiv55} and which is said to be an \emph{optimal subatomic decomposition} of $f$. We refer to \cite{Trieb} (especially Corollary 2.12 on p.23) for further details.
\end{remark}

\subsection{Besov spaces of generalized smoothness} Before we define what we mean by "Besov space of generalized smoothness", we first introduce some necessary definitions.
\begin{de}\label{DE:adm}
A real function $\Psi$ on the interval $(0,1]$ is called \emph{admissible} if it is positive and monotone on $(0,1]$, and if
$$ \Psi(2^{-j})\sim \Psi(2^{-2j}) \qquad{\mbox{ for any }}~~j\in\N. $$
\end{de}
\begin{ex}
Let $0<c<1$ and $b\in\R$. Then,
$$ \Psi(x):=|\log_2(cx)|^b \qquad{\mbox{ for }}~~x\in(0,1], $$
is an example of admissible function. Another example is
$$ \Psi(x):=(\log_2|\log_2(cx)|)^b \qquad{\mbox{ for }}~~x\in(0,1]. $$
Roughly speaking, admissible functions are functions having at most logarithmic growth or decay near zero. They may be seen as particular cases of a class of functions introduced by Karamata in the mid-thirties \cite{Karamata1,Karamata2} known as \emph{slowly varying functions} (see e.g. \cite[Definition 1.2.1, p.6]{Bingham} and also \cite[Section 3, p.226]{Bricchi} where the reader may find a discussion on how these two notions relate to each other as well as various examples and references).
\end{ex}
We refer the interested reader to \cite{Mou,Trieb} for a detailed review of the properties of admissible functions.
\begin{de}\label{G:BESOV}
Let $M\in\N^\ast$, $0<p,q\leq\infty$, $s\in(0,M)$ with $s>\sigma_p$ and let $\Psi$ be an admissible function. The \emph{Besov space of generalized smoothness} $B_{p,q}^{(s,\Psi)}(\mathbb{R}^N)$ consists of all functions $f\in L^p(\mathbb{R}^N)$ such that
\begin{align*}
[f]_{B_{p,q}^{(s,\Psi)}(\mathbb{R}^N)}:=\left(\int_{0}^1\,\sup_{|h|\leq t}\|\Delta_h^Mf\|_{L^p(\R^N)}^q\frac{\Psi(t)^q}{t^{1+sq}}\,\mathrm{d}t\right)^{1/q}<\infty, %\label{G:snBesov}
\end{align*}
which, in the case $q=\infty$, is to be understood as
\[[f]_{B_{p,\infty}^{(s,\Psi)}(\mathbb{R}^N)}:=\sup_{0<t\leq1}t^{-s}\Psi(t)\sup_{|h|\leq t}\|\Delta_h^Mf\|_{L^p(\mathbb{R}^N)}<\infty.\]
\end{de}
\noindent The space $B_{p,q}^{(s,\Psi)}(\mathbb{R}^N)$ is naturally endowed with the quasi-norm
\begin{align}
\|f\|_{B_{p,q}^{(s,\Psi)}(\mathbb{R}^N)}:=\|f\|_{L^p(\mathbb{R}^N)}+[f]_{B_{p,q}^{(s,\Psi)}(\mathbb{R}^N)}. \label{G:NORM:FD}
\end{align}
\begin{remark}\label{G:M}
Different choices of $M$ in \eqref{G:NORM:FD} yield equivalent quasi-norms.
\end{remark}
\begin{remark}
Observe that, by taking $\Psi\equiv1$, we recover the usual Besov spaces, that is we have
$$ \|f\|_{B_{p,q}^{(s,1)}(\R^N)}\sim\|f\|_{B_{p,q}^s(\R^N)}, $$
see \cite[Theorem 2.5.12, p.110]{Triebel} for a proof of this.
\end{remark}
\begin{remark}
As already mentioned in the introduction, these spaces were introduced by Triebel and Edmunds in \cite{Edmunds,ET} to study some fractal pseudo-differential operators, but the first comprehensive studies go back to Moura \cite{Mou} (see also  \cite{Bricchi2,Caetano,Caetano2,Haroske,Knopova,Trieb06,Trieb10}). In the literature these spaces are usually defined from the Fourier-analytical point of view (e.g. in \cite{Mou,Trieb}) but, as shown in \cite[Theorem 2.5, p.161]{Haroske}, the two approaches are equivalent.
\end{remark}
\begin{remark}
Notice that, here as well, the triangle inequality fails to hold when either $0<p<1$ or $0<q<1$, but, in virtue of the Aoki-Rolewicz lemma, we have the same kind of compensation as in the classical case, see \cite[Lemma 1.1, p.3]{Kalton}. That is, there exists $\eta\in(0,1]$ and an equivalent quasi-norm $\left\|\cdot\right\|_{B_{p,q}^{(s,\Psi)}(\R^N),*}$ with
$$ \|f+g\|_{B_{p,q}^{(s,\Psi)}(\R^N),*}\leq \left(\|f\|_{B_{p,q}^{(s,\Psi)}(\R^N),*}^\eta+\|g\|_{B_{p,q}^{(s,\Psi)}(\R^N),*}^\eta\right)^{1/\eta}. $$
\end{remark}
A fine property of these spaces is that they admit subatomic decompositions. In fact, it suffices to modify the definition of $(s,p)$-$\beta$-quarks to this generalized setting in the following way.
\begin{de}\label{G:quarks}
Let $r$, $\psi$ and $\psi^\beta$ with $\beta\in\N^N$ be as in Definition \ref{quarks}. Let $s>0$ and $0<p\leq\infty$. Let $\Psi$ be an admissible function. Then, in generalization of \eqref{C:quark},
$$ (\beta\mathrm{qu})_{\nu,m}(x):=2^{-\nu(s-\frac{N}{p})}\Psi(2^{-\nu})^{-1}\psi^\beta(2^\nu x-m) \qquad{\mbox{ for }}~~x\in\R^N, $$
is called an $(s,p,\Psi)$-$\beta$-quark.
\end{de}
Then, we have the following
\begin{theo}\label{G:BspqQuark}
Let $0<p,q\leq\infty$, $s>\sigma_p$ and $\Psi$ be an admissible function. Let $(\beta \mathrm{qu})_{\nu,m}$ be $(s,p,\Psi)$-$\beta$-quarks according to Definition \ref{G:quarks}. Let $\varrho>r$ (where $r$ has the same meaning as in Definition \ref{G:quarks}). Then, $B_{p,q}^{(s,\Psi)}(\mathbb{R}^N)$ coincides with the collection of all $f\in\mathscr{S}'(\mathbb{R}^N)$ which can be represented as
\begin{align}
f(x)=\sum_{\beta\in\mathbb{N}^N}\sum_{\nu=0}^{\infty}\sum_{m\in\mathbb{Z}^N}\lambda_{\nu,m}^\beta(\beta \mathrm{qu})_{\nu,m}(x), \label{G:dekompp}
\end{align}
where $\lambda^\beta\in b_{p,q}$ is a sequence such that
\begin{align*}
\|\lambda\|_{b_{p,q},\varrho}:=\sup_{\beta\in\mathbb{N}^N}2^{\varrho|\beta|}\|\lambda^\beta\|_{b_{p,q}}<\infty. %\label{G:deKK}
\end{align*}
Moreover,
\begin{align}
\|f\|_{B_{p,q}^s(\mathbb{R}^N)}\sim \inf_{\eqref{G:dekompp}}\|\lambda\|_{b_{p,q},\varrho}\hspace{0.1em}, \label{G:ekiv55}
\end{align}
where the infimum is taken over all admissible representations \eqref{G:dekompp}. In addition, the right hand side of \eqref{G:ekiv55} is independent of the choice of $\psi$ and $\varrho>r$.
\end{theo}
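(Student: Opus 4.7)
The plan is to adapt Triebel's proof of Theorem \ref{BspqQuark} (see \cite[Section 14.15]{TriebSpectra} or \cite[Theorem 2.9]{Trieb}) to the generalized smoothness scale. The key observation is that the weight $\Psi(2^{-\nu})^{-1}$ built into the $(s,p,\Psi)$-quark is calibrated to match the Fourier-analytic characterization
\[
\|f\|_{B_{p,q}^{(s,\Psi)}(\R^N)}\sim\bigg(\sum_{j\geq 0}\big(2^{js}\Psi(2^{-j})\,\|\varphi_j(D)f\|_{L^p(\R^N)}\big)^q\bigg)^{1/q},
\]
where $(\varphi_j)_{j\geq 0}$ is a smooth dyadic resolution of unity; the equivalence with Definition \ref{G:BESOV} is the content of \cite[Theorem 2.5]{Haroske} recalled above. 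I would take this as the working norm throughout, so that the admissibility condition $\Psi(2^{-j})\sim\Psi(2^{-2j})$ (which iterates to $\Psi(2^{-j})\sim\Psi(2^{-j'})$ whenever $|j-j'|\leq C$ for any fixed $C$) lets the quark weight $\Psi(2^{-\nu})^{-1}$ cancel the norm weight $\Psi(2^{-j})$ up to a bounded multiplicative error whenever $|j-\nu|$ is bounded.

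For the convergence direction, given $\lambda$ with $\|\lambda\|_{b_{p,q},\varrho}<\infty$ and $\varrho>r$, I would group \eqref{G:dekompp} by level $\nu$ and multi-index $\beta$, set $f_\nu^\beta(x)=\sum_{m\in\Z^N}\lambda_{\nu,m}^\beta (\beta\mathrm{qu})_{\nu,m}(x)$, and estimate $2^{js}\Psi(2^{-j})\|\varphi_j(D)f_\nu^\beta\|_{L^p}$ in two regimes. For $|j-\nu|\leq C$, admissibility makes the $\Psi$-weights essentially cancel and the estimate reduces to the classical one from Theorem \ref{BspqQuark}. For $|j-\nu|$ large, Fourier localization of $(\beta\mathrm{qu})_{\nu,m}$ in a ball of radius $\lesssim 2^{\nu+r}$ together with the smoothness of $\psi^\beta$ produces arbitrary polynomial decay in $|j-\nu|$ at the cost of a multiplicative factor $2^{r|\beta|}$. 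A discrete convolution inequality then handles the summation over $\nu$, the $\ell^q$-summation in $j$ is taken last, and the factor $2^{r|\beta|}$ is absorbed by $2^{-\varrho|\beta|}$ since $\varrho>r$, giving $\|f\|_{B_{p,q}^{(s,\Psi)}(\R^N)}\lesssim\|\lambda\|_{b_{p,q},\varrho}$.

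For the converse, given $f\in B_{p,q}^{(s,\Psi)}(\R^N)$, I would follow Triebel's construction: expand each Littlewood–Paley block $\varphi_\nu(D)f$ against the scaled partition of unity $\sum_{m}\psi(2^\nu x-m)=1$ and Taylor-expand, at the cube center $2^{-\nu}m$, the smooth factor multiplying $\psi(2^\nu\cdot-m)$. The coefficient of the $\beta$-th monomial becomes $\lambda_{\nu,m}^\beta$ after extracting the quark's normalizing constants $2^{-\nu(s-N/p)}\Psi(2^{-\nu})^{-1}$, and a careful control of the Taylor remainder yields
\[
|\lambda_{\nu,m}^\beta|\lesssim 2^{-\varrho|\beta|}\,2^{\nu(s-N/p)}\Psi(2^{-\nu})\,\sup_{y\in cQ_{\nu,m}}|(\varphi_\nu(D)f)(y)|
\]
for any $\varrho>r$. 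Majorizing the pointwise supremum in $L^p(dm)$ by $\|\varphi_\nu(D)f\|_{L^p}$ via the Peetre maximal function, then taking $\ell^p$ in $m$ and $\ell^q$ in $\nu$, reproduces exactly the Fourier-analytic norm displayed above, giving $\|\lambda\|_{b_{p,q},\varrho}\lesssim\|f\|_{B_{p,q}^{(s,\Psi)}(\R^N)}$.

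The main obstacle is bookkeeping rather than conceptual: one must check that every step in the classical subatomic theory survives the insertion of $\Psi(2^{-\nu})^{-1}$ into each quark. This ultimately reduces to the iterated admissibility property, and the off-diagonal regime $|j-\nu|\gg1$ is controlled by the same rapid decay from smoothness of $\psi$ as in the classical case. The independence of \eqref{G:ekiv55} of the choice of $\psi$ and $\varrho>r$ is then inherited verbatim from the classical proof, since only the radius of $\mathrm{supp}(\psi)$ enters the estimates.
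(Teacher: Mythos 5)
The paper does not prove this theorem: immediately after the statement it cites \cite[Theorem 1.23]{Mou} and \cite[Theorem 10]{Knopova}, where Triebel's classical quarkonial decomposition is carried over to the generalized-smoothness scale. Your sketch follows exactly the strategy of those references, so there is no in-paper argument to compare against, and the overall plan is sound. In particular, in the converse direction the coefficients $\lambda_{\nu,m}^\beta$ are read off directly from $\varphi_\nu(D)f$, so the $\Psi(2^{-\nu})^{-1}$ built into the quark cancels the $\Psi(2^{-\nu})$ in the Fourier-analytic norm of \cite[Theorem 2.5]{Haroske} verbatim, with no level mismatch to worry about.

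In the convergence direction, two points should be corrected or made explicit. First, quarks are compactly supported in physical space, so their Fourier transforms are entire and cannot be ``localized in a ball of radius $\lesssim 2^{\nu+r}$''; what is true is that $\widehat{\psi^\beta}$ decays rapidly, which gives superpolynomial decay of $\|\varphi_j(D)(\beta\mathrm{qu})_{\nu,m}\|_{L^p}$ only for $j>\nu$. For $j<\nu$ the Fourier side provides no gain (generically $\widehat{\psi^\beta}(0)\neq0$); the decay there comes entirely from the normalization $2^{-\nu(s-N/p)}$ against the factor $2^{js}$ in the norm, and is at a \emph{fixed} exponential rate governed by $s$ (with additional care for $p<1$, where $\varphi_j(D)$ acts on a non--band-limited input), not ``arbitrary polynomial decay''. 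Second, this fixed rate must absorb the weight mismatch $\Psi(2^{-j})/\Psi(2^{-\nu})$, and the one estimate that distinguishes the generalized case from $\Psi\equiv1$ — which you invoke implicitly via ``iterated admissibility'' but do not state — is the slowly-varying bound: iterating $\Psi(2^{-j})\sim\Psi(2^{-2j})$ together with the monotonicity of $\Psi$ gives, for every $\eps>0$, a constant $C_\eps>0$ such that
\[
C_\eps^{-1}\,2^{-\eps|j-\nu|}\ \leq\ \frac{\Psi(2^{-j})}{\Psi(2^{-\nu})}\ \leq\ C_\eps\,2^{\eps|j-\nu|}\qquad\text{for all }j,\nu\in\N.
\]
Choosing $\eps$ strictly smaller than the exponential rate from the previous step is what makes the discrete convolution inequality close. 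Making this explicit is the only genuine departure from the classical argument; the rest is, as you say, bookkeeping, and the claimed independence from the choice of $\psi$ and $\varrho>r$ is then indeed inherited verbatim.
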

This result can be found in \cite[Theorem 1.23, pp.35-36]{Mou} (see also \cite[Theorem 10, p.284]{Knopova}).
\begin{remark}
The counterpart of Remark \ref{optdecsub} for $B_{p,q}^{(s,\Psi)}(\R^N)$ remains valid, see \cite[Remark 1.26, p.48]{Mou}.
\end{remark}

\subsection{Related spaces and embeddings}\label{SU:spEM}
Let us now say a brief word about embeddings.
Given a locally integrable function $f$ in $\R^N$ and a set $B\subset\R^N$ having finite nonzero Lebesgue measure, we let
$$f_B:=\fint_{B}f(y)\,\mathrm{d}y=\frac{1}{\mathcal{L}_N(B)}\int_{B}f(y)\,\mathrm{d}y,$$
be the average of $f$ on $B$.

Moreover, we denote by $f^*:\R_+\to\R_+$ the \emph{decreasing rearrangement} of $f$, given by
$$ f^*(t):=\inf\big\{\lambda\geq0:\mu_f(\lambda)\leq t\big\}, $$
for all $t\geq0$, where
$$\mu_f(\lambda):=\mathcal{L}_N\left(\{x\in\R^N\!:|f(x)|>\lambda\}\right), $$
is the so-called \emph{distribution function} of $f$.
\begin{de}\label{spEM}
Let $s>0$ and $0<p<\infty$.
\begin{enumerate}
\item[$(\mathrm{i})$] The \emph{Zygmund-H\"older space} $C^s(\R^N)$ is the Besov space $B_{\infty,\infty}^s(\R^N)$.
\item[$(\mathrm{ii})$] The space of functions of \emph{bounded mean oscillation}, denoted by $\mathrm{BMO}(\R^N)$, consists of all locally integrable functions $f$ such that
\begin{align}
\|f\|_{\mathrm{BMO}(\R^N)}:=\sup_{B}\,\fint_{B}\left|f(x)-f_{B}\right|\mathrm{d}x<\infty, \label{DE:BMO}
\end{align}
where the supremum in \eqref{DE:BMO} is taken over all balls $B\subset\R^N$.
\item[$(\mathrm{iii})$] The \emph{weak} $L^p$\emph{-space}, denoted by $L^{p,\infty}(\R^N)$, consists of all measurable functions $f$ such that
\begin{align*}
\|f\|_{L^{p,\infty}(\R^N)}:=\sup_{t>0}\,t^{1/p}f^*(t)<\infty, %\label{DE:LORENTZ}
\end{align*}
where $f^*$ is the decreasing rearrangement of $f$.
\end{enumerate}
\end{de}
Let us now state the following
\begin{theo}[Sobolev embedding theorem for $B_{p,q}^s$]
Let $0<p,q<\infty$ and $s>\sigma_p$.
\begin{enumerate}
\item[$(\mathrm{i})$] If $sp>N$, then $B_{p,q}^s(\R^N)\hookrightarrow C^{s-\frac{N}{p}}(\R^N)$. \vspace{3pt}
\item[$(\mathrm{ii})$] If $sp=N$, then $B_{p,q}^s(\R^N)\hookrightarrow \mathrm{BMO}(\R^N)$. %\vspace{3pt}
\item[$(\mathrm{iii})$] If $sp<N$, then $B_{p,q}^s(\R^N)\hookrightarrow L^{\frac{Np}{N-sp},\infty}(\R^N)$. \vspace{3pt}
\end{enumerate}
In particular, $B_{p,q}^s(\R^N)\hookrightarrow A^{s,p}(\R^N)$ where $A^{s,p}(\R^N)$ is as in \eqref{Asp}.
\end{theo}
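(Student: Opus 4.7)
The strategy is to reduce all three embeddings to Sobolev-type results for Besov (and Triebel-Lizorkin) spaces, which fit naturally into the subatomic framework of Theorem \ref{BspqQuark}. The key ingredient is the Jawerth embedding: if $0<p_0<p_1\leq\infty$, $s_0-N/p_0 = s_1-N/p_1$ and $0<q\leq\infty$, then $B_{p_0,q}^{s_0}(\R^N) \hookrightarrow B_{p_1,q}^{s_1}(\R^N)$. This follows cleanly from the quark representation: the normalization factor $2^{-\nu(s-N/p)}$ is invariant under the Sobolev scaling, so an optimal decomposition of $f$ with $(s_0,p_0)$-quarks becomes admissible for $(s_1,p_1)$-quarks with the same coefficients, and the $b_{p_0,q}\hookrightarrow b_{p_1,q}$ estimate reduces to $\ell^{p_0}\hookrightarrow\ell^{p_1}$ applied scale by scale.

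For (i), taking $p_1=\infty$ and $s_1=s-N/p>0$ yields $B_{p,q}^s\hookrightarrow B_{\infty,q}^{s-N/p}$; since $\ell^q\hookrightarrow\ell^\infty$, this continues into $B_{\infty,\infty}^{s-N/p}=C^{s-N/p}$ by Definition \ref{spEM}(i). For (iii), the target $L^{p^*,\infty}$ with $p^*=Np/(N-sp)$ is no longer a Besov space, so I would go through the Franke variant $B_{p,q}^s\hookrightarrow F_{p^*,q}^0$ (proved by the same quark redistribution, but now invoking Peetre's maximal function for Triebel-Lizorkin spaces) and then use the classical $F_{p^*,q}^0\hookrightarrow L^{p^*,\infty}$. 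A more hands-on alternative is to apply Nikolskii's inequality $\|f_j\|_{L^{p^*}}\lesssim 2^{jN(1/p-1/p^*)}\|f_j\|_{L^p} = 2^{js}\|f_j\|_{L^p}$ to each dyadic block $f_j$ of $f$, deduce $\sup_j \|f_j\|_{L^{p^*}}\lesssim \|f\|_{B_{p,\infty}^s}$, and then control the distribution function of $f$ by an $\ell^q$-summation in $j$.

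The main obstacle is (ii), the borderline case $sp=N$, because $\mathrm{BMO}$ is not itself a Besov space and the critical scaling makes the naive quark estimates only logarithmically convergent. My plan is to bound the mean oscillation directly: for any ball $B$ of radius $r\sim 2^{-\nu_0}$, split the quark series into a low-frequency part $f^{\mathrm{low}}$ gathering the levels $\nu\leq\nu_0$ and a high-frequency part $f^{\mathrm{high}}$ collecting the rest. One checks that $f^{\mathrm{low}}-f^{\mathrm{low}}(x_B)$ is Lipschitz on $B$ with a controlled constant by differentiating the quark series and summing geometrically over $\beta$ (using the decay $\|\lambda^\beta\|_{b_{p,q}}\lesssim 2^{-\varrho|\beta|}$ from Theorem \ref{BspqQuark}), while the $L^1$-average of $f^{\mathrm{high}}$ on $B$ is bounded by Hölder's inequality applied at the critical exponent $sp=N$. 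Telescoping these two bounds should yield $\fint_B|f-f_B|\,\mathrm{d}x\lesssim\|f\|_{B_{p,q}^{N/p}}$ uniformly in $B$, which is the required BMO estimate.
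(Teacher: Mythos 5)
The paper's own proof of this theorem consists of three one-line citations (Triebel for (i), Mironescu--Russ--Sire for (ii), Peetre for (iii)), so your attempt to derive the embeddings directly is a genuinely different route. Part (i) is correct as you state it: with the common normalization $2^{-\nu(s-N/p)}$, the same coefficient sequence is an admissible $(s-N/p,\infty)$-quark decomposition, and $\ell^p\hookrightarrow\ell^\infty$, $\ell^q\hookrightarrow\ell^\infty$ give $B_{p,q}^s\hookrightarrow B_{\infty,\infty}^{s-N/p}=C^{s-N/p}$. (What you call the Jawerth embedding is really the plain Sobolev embedding with fixed third index; Jawerth and Franke both have a Triebel--Lizorkin space on one side.)

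There is a genuine gap in (iii). Franke's embedding reads $B_{p,p^*}^{s}\hookrightarrow F_{p^*,q}^{0}$ --- the third index of the source Besov space must equal $p^*$. Hence the chain $B_{p,q}^s\hookrightarrow F_{p^*,q}^0$ can only be reached via $B_{p,q}^s\hookrightarrow B_{p,p^*}^s$, which forces $q\le p^*$; the theorem, however, allows any $q<\infty$, and for $q>p^*$ the monotonicity of the third index goes the wrong way, so this route collapses. Your Nikolskii alternative is the right idea, but no $\ell^q$-summation is needed (nor would it help for $q>p^*$): since $B_{p,q}^s\hookrightarrow B_{p,\infty}^s$ for every $q$, it suffices to show $B_{p,\infty}^s\hookrightarrow L^{p^*,\infty}$, which is a standard Marcinkiewicz split. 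For $\lambda>0$ choose $J$ with $2^{J(N/p-s)}\|f\|_{B_{p,\infty}^s}\sim\lambda$, bound $\sum_{j<J}f_j$ in $L^\infty$ by Nikolskii (the geometric sum converges because $s<N/p$), bound $\sum_{j\ge J}f_j$ in $L^p$ by the quasi-triangle inequality, apply Chebyshev to the latter, and optimize in $J$ to obtain $\mu_f(\lambda)\lesssim\lambda^{-p^*}\|f\|_{B_{p,\infty}^s}^{p^*}$. Finally, your sketch for (ii) is plausible for $p\ge1$, but the H\"older step yielding $\fint_B|f^{\mathrm{high}}|\lesssim|B|^{-1/p}\|f^{\mathrm{high}}\|_{L^p}$ fails when $0<p<1$; one should first use $B_{p,q}^{N/p}\hookrightarrow B_{1,q}^{N}$ to reduce to $p=1$ before running the low/high decomposition.
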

\begin{proof}
The cases (i), (ii) and (iii) are respectively covered by \cite[Formula (12), p.131]{Triebel}, \cite[Lemma 6.5]{MRS} and \cite[Th\'eor\`eme 8.1, p.301]{JPeetre}.
\end{proof}
\begin{remark}\label{WEIGHT:EMB}
Let us briefly mention that a corresponding result holds for the spaces $B_{p,q}^{(s,\Psi)}(\R^N)$. As already mentioned in the introduction, the space $B_{p,q}^{(s,\Psi)}(\R^N)$ is embedded in a generalized version of the H\"older space when $sp>N$. When $sp<N$, it is shown in \cite{Caetano} that $B_{p,q}^{(s,\Psi)}(\R^N)$ embeds in a weighted version of $L^{\frac{Np}{N-sp},\infty}(\R^N)$. Yet, when $sp=N$, the corresponding substitute for $\mathrm{BMO}$ does not seem to have been clearly identified nor considered in the literature, see however \cite{Caetano2,Gurka,Gurka2,MNS} where some partial results are given.
\end{remark}

\section{Preliminaries}\label{SE:SUITE}

In this section, we study the properties of some discrete sequences which will play an important role in the sequel. More precisely, we will be interested in the convergence of series of the type
$$ \sum_{j\geq0} 2^j|\lambda_{j,\lfloor 2^jx\rfloor}| \qquad{\mbox{ for }}~~x>0, $$
where $\lambda=(\lambda_{j,k})_{j,k\geq0}$ is an element of some Besov sequence space, say, $b_{1,q}$ with $q>1$.

\subsection{Some technical lemmata}
Let us start with a famous result due to Cauchy.

\begin{theo}[Cauchy's condensation test]
Let $\lambda\in\ell^1(\N)$ be a nonnegative, nonincreasing sequence. Then,
$$\sum_{j\geq1}\lambda_j\leq\,\sum_{j\geq0}2^j\lambda_{2^j}\leq 2\sum_{j\geq1}\lambda_j.$$
\end{theo}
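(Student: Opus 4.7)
The plan is to exploit the monotonicity of $\lambda$ by decomposing $\sum_{j\geq 1}\lambda_j$ into dyadic blocks whose sums can be sandwiched between their endpoint values. Two slightly different block structures will be used for the two inequalities, which is needed to obtain the sharp constant $2$.

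For the first inequality, I would partition $\N^*$ as the disjoint union of the blocks $I_k := [\![2^k, 2^{k+1}-1]\!]$, $k\geq 0$, each of cardinality $2^k$. Since $\lambda$ is nonincreasing, one has $\lambda_j \leq \lambda_{2^k}$ for every $j\in I_k$, so
$$\sum_{j\in I_k}\lambda_j \leq 2^k\lambda_{2^k}.$$
Summing over $k\geq 0$ yields at once $\sum_{j\geq 1}\lambda_j \leq \sum_{j\geq 0}2^j\lambda_{2^j}$.

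For the second inequality, I would instead use the shifted blocks $J_k := [\![2^{k-1}+1, 2^k]\!]$, $k\geq 1$, which together cover $\N^*\setminus\{1\}$ and have cardinality $2^{k-1}$. On each $J_k$, the reverse endpoint estimate $\lambda_j \geq \lambda_{2^k}$ gives
$$\sum_{j\in J_k}\lambda_j \geq 2^{k-1}\lambda_{2^k}.$$
Summing over $k\geq 1$ produces $2\sum_{j\geq 2}\lambda_j \geq \sum_{k\geq 1}2^k\lambda_{2^k}$, after which adding $2\lambda_1 \geq \lambda_1 = 2^0\lambda_{2^0}$ to both sides delivers $\sum_{j\geq 0}2^j\lambda_{2^j} \leq 2\sum_{j\geq 1}\lambda_j$.

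The argument is entirely elementary and there is no real obstacle; the only point worth flagging is the need to use the shifted blocks $J_k$ rather than the symmetric ones $I_k$ for the second bound, as the latter choice forces a boundary correction of $\lambda_1$ on the right-hand side and yields only the cruder constant $3$. No convergence issues arise because the $\ell^1$ assumption guarantees that both sides are finite throughout the manipulation.
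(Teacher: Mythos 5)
Your proof is correct, and it is the standard textbook argument for Cauchy's condensation test: dyadic blocking, with the crucial choice of the shifted blocks $J_k = [\![2^{k-1}+1,2^k]\!]$ to get the constant $2$ rather than $3$. The paper itself does not supply a proof of this theorem --- it simply states it as a classical result and applies it in Lemma~3.3 --- so there is nothing in the paper to compare against; your derivation is a complete and self-contained justification of what the paper takes for granted.
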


\begin{remark}\label{nonincreazing}
The monotonicity assumption on $\lambda$ is central here. Indeed, there exist nonnegative sequences $\lambda\in\ell^1(\N)$ which are not nonincreasing and such that $\sum_{j\geq0}2^j\lambda_{2^j}=\infty$. Take for example:
\[
\lambda_j=\left\{
\begin{array}{c l}
1/k^2 & \text{if } j=2^k\hbox{ and }k\neq0, \vspace{3pt}\\
2^{-j} & \text{else}.% & j\ne2^k.
\end{array}
\right.
\]
Then, clearly, $\lambda\in\ell^1(\N)$. However, $2^j\lambda_{2^j}=\frac{2^j}{j^2}$ when $j\geq1$, so that $(2^j\lambda_{2^j})_{j\geq0}\notin\ell^1(\N)$.
\end{remark}

\noindent A simple consequence of Cauchy's condensation test is the following
\begin{lemma}\label{condens}
Let $\lambda\in\ell^1(\N)$ be a nonnegative, nonincreasing sequence. Then,
\begin{align*}
\sum_{j\geq0}2^j\lambda_{\lfloor 2^j x\rfloor}\leq \phi(x)\sum_{j\geq1}\lambda_j \qquad{\mbox{ for any }}~~x>0,
\end{align*}
where $\phi(x):=\frac{4}{|x|}\left(\mathds{1}_{[1,\infty)}(x)+(1-\log_2|x|)\mathds{1}_{(0,1)}(x)\right)$.
\end{lemma}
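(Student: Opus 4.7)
The plan is to split the argument into the two cases $x\geq 1$ and $0<x<1$ (matching the two pieces of $\phi$) and to reduce both to Cauchy's condensation test via a dyadic bookkeeping on the values of $\lfloor 2^j x\rfloor$.

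First I would dispatch the case $x\geq 1$. The key observation is that for each integer $k\geq 0$, the condition $\lfloor 2^j x\rfloor\in[2^k,2^{k+1})$ forces $2^j\in[2^k/x,2^{k+1}/x)$, an interval of length-ratio $2$, and hence containing at most one power of two. So at most one $j$ contributes to each $k$, and for that $j$ one has both $2^j\leq 2^{k+1}/x$ and $\lambda_{\lfloor 2^j x\rfloor}\leq \lambda_{2^k}$ by monotonicity of $\lambda$. Summing over $k$ yields
\[
\sum_{j\geq 0}2^j\lambda_{\lfloor 2^j x\rfloor}\leq\frac{2}{x}\sum_{k\geq 0}2^k\lambda_{2^k},
\]
and Cauchy's condensation test turns the right-hand side into $(4/x)\sum_{k\geq 1}\lambda_k$, which is exactly $\phi(x)\sum_{k\geq 1}\lambda_k$ on $[1,\infty)$.

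Next I would handle $0<x<1$. Setting $j_0:=\lceil -\log_2 x\rceil$, one has $\lfloor 2^j x\rfloor=0$ precisely when $j<j_0$, and $y:=2^{j_0}x\in[1,2)$ with $2^{j_0}\leq 2/x$. Splitting the sum at $j_0$, the tail $j\geq j_0$ becomes, after the shift $i=j-j_0$,
\[
\sum_{j\geq j_0}2^j\lambda_{\lfloor 2^j x\rfloor}=2^{j_0}\sum_{i\geq 0}2^i\lambda_{\lfloor 2^i y\rfloor},
\]
and the already-established case $y\geq 1$ bounds this by $2^{j_0}\,(4/y)\sum_{k\geq 1}\lambda_k\leq(8/x)\sum_{k\geq 1}\lambda_k$, which is absorbed into $\phi(x)\sum_{k\geq 1}\lambda_k$ since $1-\log_2 x\geq 1$ on $(0,1)$. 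The prefix $\sum_{j<j_0}2^j\lambda_{\lfloor 2^j x\rfloor}$ collapses to $(2^{j_0}-1)\lambda_0\leq(2/x)\lambda_0$, and the logarithmic factor $(1-\log_2 x)\geq j_0$ in $\phi$ is exactly what is needed to absorb this prefix via the monotonicity of $\lambda$.

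The main obstacle is precisely this last step: controlling the prefix contribution, which is the source of the factor $(1-\log_2 x)$, since the monotone tail $\sum_{k\geq 1}\lambda_k$ does not see $\lambda_0$ directly and one must combine monotonicity with the bound $j_0\leq 1-\log_2 x$ to close the estimate. Everything else is a routine dyadic count plus a single application of Cauchy's condensation test, with the second case essentially recycling the first via the shift $i=j-j_0$.
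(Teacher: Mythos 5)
Your two-case split mirrors the paper's own proof, which also handles $x\geq1$ and $0<x<1$ separately and relies on Cauchy's condensation test in both. For $x\geq1$ your dyadic count (at most one $j$ per block $[2^k,2^{k+1})$) and the paper's direct bound (using $\lfloor 2^jx\rfloor\geq 2^{j+k}$ when $2^k\leq x\leq 2^{k+1}$) amount to essentially the same estimate, and your reduction of the tail $j\geq j_0$ in the case $0<x<1$ to the case $x\geq1$ via the shift $i=j-j_0$ is a tidy variant of the paper's index shift.

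However, the prefix step you yourself flag as "the main obstacle" is a genuine gap that you do not close, and in fact it cannot be closed as the lemma is stated. For $0<x<1$ and $j<j_0$ one has $\lfloor 2^jx\rfloor=0$, so the left-hand side picks up $\lambda_0$, while $\sum_{j\geq1}\lambda_j$ on the right does not see $\lambda_0$ at all; monotonicity runs the wrong way ($\lambda_0\geq\lambda_1$) and no factor depending only on $x$ can compensate. Concretely, take $\lambda_0=1$, $\lambda_j=0$ for $j\geq1$, and any $x\in(0,1)$: the left side is at least $\lambda_0=1$, the right side is $0$, and the inequality fails. This is actually a defect in the paper's own proof as well: the step $\sum_{j\geq -k-1}2^j\lambda_{\lfloor 2^j\rfloor}\leq (k+1)\sum_{j\geq0}2^j\lambda_{2^j}$ silently requires $\lambda_0(1-2^{-k-1})\leq k\sum_{j\geq0}2^j\lambda_{2^j}$, which fails in the same example. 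The statement becomes correct if one either restricts to $x\geq1$ or replaces $\sum_{j\geq1}\lambda_j$ by $\sum_{j\geq0}\lambda_j$ on the right: your tail bound is then $(4/x)\sum_{j\geq1}\lambda_j$ (note $2^{j_0}/y=1/x$ exactly, so your $8/x$ improves to $4/x$), your prefix bound is $(2/x)\lambda_0$, and their sum is at most $(4/x)\sum_{j\geq0}\lambda_j\leq\phi(x)\sum_{j\geq0}\lambda_j$ since $1-\log_2 x\geq1$ on $(0,1)$. Since Lemma \ref{condens} is never invoked later in the paper (the working tools are Lemmas \ref{ell1ekiv} and \ref{LE:tech}), this does not affect the main results, but your suspicion about that step was well founded.
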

\begin{proof}
Let $k\in\mathbb{N}$ and $2^k\leq x\leq2^{k+1}$. Then, by Cauchy's condensation test
\begin{align}
\sum_{j\geq0}2^j\lambda_{\lfloor 2^j x\rfloor}&\leq\sum_{j\geq0}2^j\lambda_{2^{k+j}}=2^{-k}\sum_{j\geq k}2^j\lambda_{2^j}\leq 2^{-k}\sum_{j\geq 0}2^j\lambda_{2^j}\leq \frac{4}{x}\sum_{j\geq1}\lambda_j. \nonumber
\end{align}
In like manner, for $2^{-(k+1)}\leq x\leq 2^{-k}$, we have
\begin{align}
\sum_{j\geq0}2^j\lambda_{\lfloor 2^j x\rfloor}&\leq\sum_{j\geq0}2^j\lambda_{\lfloor 2^{j-k-1}\rfloor}=2^{k+1}\sum_{j\geq -k-1}2^j\lambda_{\lfloor2^j\rfloor}\leq 2^{k+1}(k+1)\sum_{j\geq0}2^j\lambda_{2^j}. \nonumber
\end{align}
Finally, invoking again Cauchy's condensation test, we have
\begin{align}
\sum_{j\geq0}2^j\lambda_{\lfloor 2^j x\rfloor}&\leq 2^{k+2}(k+1)\sum_{j\geq1}\lambda_j\leq \frac{4}{x}\left(1-\log_2(x)\right)\sum_{j\geq1}\lambda_j. \nonumber \qedhere
\end{align}
\end{proof}
In some sense, this "functional version" of Cauchy's condensation test may be generalized to sequences which are not necessarily nonincreasing. Indeed, one can show that
\[\mathcal{L}_1\bigg(\bigg\{x\in\mathbb{R}_+:\sum_{j\geq0}2^j|\lambda_{\lfloor 2^j x\rfloor}|=+\infty\bigg\}\bigg)=0,\]
whenever $\lambda\in\ell^1(\mathbb{N})$. This is due to the fact that $\ell^p$-spaces can be seen as "amalgams" of $L^p(1,2)$ and a weighted version of $\ell^p$. More precisely, we have
\begin{lemma}\label{ell1ekiv}
Let $0<p<\infty$ and let $\lambda\in\ell^p(\mathbb{N})$. Then
$$ \bigg(\sum_{j\geq1}|\lambda_j|^p\bigg)^{1/p}=\bigg(\int_{[1,2]}\,\sum_{j\geq0}2^j|\lambda_{\lfloor2^j x\rfloor}|^p\,\mathrm{d}x\bigg)^{1/p}.$$
\end{lemma}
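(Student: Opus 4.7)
The plan is to expand the right-hand side by an explicit change of variable / level-set computation and show that it re-assembles into $\sum_{j\geq 1}|\lambda_j|^p$. Raise both sides to the $p$-th power; then the claim is the identity
\[
\sum_{j\geq 1}|\lambda_j|^p \;=\; \int_{[1,2]}\sum_{j\geq 0}2^j|\lambda_{\lfloor 2^j x\rfloor}|^p\,\mathrm{d}x,
\]
which I would establish by evaluating the right-hand side term by term.

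First, since all summands are nonnegative, Tonelli's theorem lets me exchange the sum and the integral, giving
\[
\int_{[1,2]}\sum_{j\geq 0}2^j|\lambda_{\lfloor 2^j x\rfloor}|^p\,\mathrm{d}x \;=\; \sum_{j\geq 0}2^j\int_{1}^{2}|\lambda_{\lfloor 2^j x\rfloor}|^p\,\mathrm{d}x.
\]
For each fixed $j\geq 0$ I would split $[1,2]$ along the level sets of the integer-part function. Observe that for $x\in[1,2)$ the value $\lfloor 2^j x\rfloor$ runs exactly through the integers $k=2^j,2^j+1,\dots,2^{j+1}-1$, and that for each such $k$ the set $\{x\in[1,2):\lfloor 2^j x\rfloor=k\}$ is the interval $[2^{-j}k,2^{-j}(k+1))$, which has length $2^{-j}$. (The endpoint $x=2$ contributes measure zero.) Consequently
\[
\int_{1}^{2}|\lambda_{\lfloor 2^j x\rfloor}|^p\,\mathrm{d}x \;=\; 2^{-j}\sum_{k=2^j}^{2^{j+1}-1}|\lambda_k|^p.
\]

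Substituting this back yields
\[
\sum_{j\geq 0}2^j\cdot 2^{-j}\sum_{k=2^j}^{2^{j+1}-1}|\lambda_k|^p \;=\; \sum_{j\geq 0}\,\sum_{k=2^j}^{2^{j+1}-1}|\lambda_k|^p \;=\; \sum_{k\geq 1}|\lambda_k|^p,
\]
since the dyadic blocks $[\![2^j,2^{j+1}-1]\!]$ partition $\N^*$. Taking $p$-th roots gives the claimed equality.

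There is no real obstacle here: the argument is a direct computation, and the only thing to be a touch careful about is the partition of $[1,2]$ into level sets of $\lfloor 2^j\,\cdot\,\rfloor$ and the fact that the boundary points form a null set, which legitimizes treating $[1,2]$ as the disjoint union of the half-open intervals $[2^{-j}k,2^{-j}(k+1))$. Tonelli applies without comment because the integrand is nonnegative, and the identity then holds in $[0,+\infty]$, so the hypothesis $\lambda\in\ell^p(\N)$ is only used to ensure both sides are finite.
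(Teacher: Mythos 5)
Your proof is correct and is essentially the same as the paper's: both reduce to the $p=1$ case (you by raising to the $p$-th power, the paper by declaring it suffices), both compute $\int_1^2 \lambda_{\lfloor 2^j x\rfloor}\,\mathrm{d}x = 2^{-j}\sum_{2^j\le k<2^{j+1}}\lambda_k$ — you via the level sets of $\lfloor 2^j\cdot\rfloor$, the paper via the change of variables $x=2^j y$, which are the same computation — and both then sum over $j$ using Tonelli and the dyadic partition of $\N^*$.
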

\begin{proof}
It suffices to assume $p=1$ and that $\lambda$ is nonnegative. Then,
\begin{align*}
\frac{1}{2^k}\sum_{2^k\leq j<2^{k+1}}\lambda_j=\fint_{[2^k,2^{k+1}]}\lambda_{\lfloor x\rfloor}\,\mathrm{d}x=\frac{1}{2^k}\int_{[1,2]}\lambda_{\lfloor 2^ky\rfloor}2^k\,\mathrm{d}y=\int_{[1,2]}\lambda_{\lfloor2^k y\rfloor}\,\mathrm{d}y,
\end{align*}
which yields
\begin{align}
\sum_{j\in\mathbb{N}^\ast}\lambda_j&=\sum_{k\in\mathbb{N}}\sum_{2^k\leq j<2^{k+1}}\lambda_j= \sum_{k\in\mathbb{N}}2^k\int_{[1,2]}\lambda_{\lfloor2^k x\rfloor}\,\mathrm{d}x=\int_{[1,2]}\bigg(\sum_{k\in\mathbb{N}}\lambda_{\lfloor2^k x\rfloor}2^k\bigg)\mathrm{d}x. \nonumber \qedhere
\end{align}
%The proof is complete.
\end{proof}
We now establish a technical inequality which we will be needed in the sequel.
\begin{lemma}\label{LE:tech}
Let $N\geq1$ and $0<p<\infty$. Let $\lambda=(\lambda_{j,k}^\beta)_{(j,\beta,k)\in\N\times\N^N\times\N^N}$ be a sequence such that the partial sequences $(\lambda_{j,k}^\beta)_{k\in\N^N}$ belong to $\ell^p(\N^N)$ for all $(j,\beta)\in\N\times\N^N$.
Then, for any positive $(\alpha_j)_{j\geq0}\in\ell^1(\N)$ there exists $C=C(\lambda,\alpha,N,d)>0$ such that for any $(j,\beta)\in\N\times\N^N$,
\begin{align*}
2^{j N}|\lambda_{j,\lfloor 2^j x\rfloor}^\beta|^p\leq C\,\frac{\max\{1,|\beta|^{N+1}\}}{\alpha_j}\sum_{k\in\N^N}|\lambda_{j,k}^\beta|^p, %\label{CONC:tech2}
\end{align*}
holds for a.e. $x=(x_{1},...,x_N)\in[1,2]^{N}$ where
$$ \lfloor 2^j x\rfloor = (\lfloor 2^j x_{1}\rfloor, ... \,,\lfloor 2^j x_N\rfloor)\in\N^{N}. $$
\end{lemma}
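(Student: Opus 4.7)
The plan is to build a single nonnegative majorant on $[1,2]^N$ which is $L^1$ and controls every $2^{jN}|\lambda_{j,\lfloor 2^jx\rfloor}^\beta|^p$ simultaneously; once its finiteness a.e. is established, the inequality drops out uniformly in $(j,\beta)$ at every $x$ where the majorant is finite. The delicate part is that the bound must hold on a \emph{common} full-measure set for all $(j,\beta)$, not merely a $(j,\beta)$-dependent one, so a termwise Fubini argument is not quite enough: the key is to sum the right weights in $(j,\beta)$ before integrating.

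Step 1 (setting up). Set $S_{j,\beta}:=\sum_{k\in\N^N}|\lambda_{j,k}^\beta|^p$ and discard the $(j,\beta)$ with $S_{j,\beta}=0$ (for which the inequality is trivial). I would then define
\[
G(x):=\sum_{j\in\N}\sum_{\beta\in\N^N}\frac{\alpha_j}{\max\{1,|\beta|^{N+1}\}}\cdot\frac{2^{jN}\,|\lambda_{j,\lfloor 2^jx\rfloor}^\beta|^p}{S_{j,\beta}},\qquad x\in[1,2]^N,
\]
which makes sense in $[0,\infty]$ since every term is nonnegative.

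Step 2 (estimating $\|G\|_{L^1}$). By Tonelli I can exchange $\int$ and the double sum. For each fixed $(j,\beta)$, the change of variable $y=2^jx$ gives the $N$-dimensional counterpart of Lemma~\ref{ell1ekiv}:
\[
\int_{[1,2]^N}2^{jN}|\lambda_{j,\lfloor 2^jx\rfloor}^\beta|^p\,\mathrm{d}x=\sum_{\substack{k\in\N^N\\ 2^j\le k_i\le 2^{j+1}-1}}|\lambda_{j,k}^\beta|^p\le S_{j,\beta}.
\]
Plugging this in, the factors $S_{j,\beta}$ cancel and I obtain
\[
\|G\|_{L^1([1,2]^N)}\le \Bigl(\sum_{j\in\N}\alpha_j\Bigr)\Bigl(\sum_{\beta\in\N^N}\frac{1}{\max\{1,|\beta|^{N+1}\}}\Bigr).
\]
The first factor is finite by hypothesis. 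For the second, I would use the elementary count $\#\{\beta\in\N^N:|\beta|=n\}=\binom{n+N-1}{N-1}=O(n^{N-1})$, which bounds the sum by $1+C\sum_{n\ge 1}n^{-2}<\infty$.

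Step 3 (conclusion). Since $G\in L^1([1,2]^N)$, I have $G(x)<\infty$ for almost every $x\in[1,2]^N$. For any such $x$, each individual summand in the definition of $G(x)$ is dominated by $G(x)$, and rearranging yields
\[
2^{jN}|\lambda_{j,\lfloor 2^jx\rfloor}^\beta|^p\le G(x)\cdot\frac{\max\{1,|\beta|^{N+1}\}}{\alpha_j}\,S_{j,\beta}\qquad\text{for every }(j,\beta),
\]
which is the claim with $C$ equal to the a.e.-finite value $G(x)$. The only genuine subtlety here is the choice of the weight $1/\max\{1,|\beta|^{N+1}\}$: the exponent $N+1$ is forced precisely by the polynomial growth of order $N-1$ of the number of multi-indices $\beta$ of length $n$, together with the need to gain an additional power so as to produce a convergent series in $n$; everything else is a routine application of Tonelli plus the rescaling identity already exploited in the proof of Lemma~\ref{ell1ekiv}.
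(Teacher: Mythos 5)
Your proof is correct and follows essentially the same strategy as the paper: both hinge on the iterated Lemma~\ref{ell1ekiv} to get $\int_{[1,2]^N}2^{jN}|\lambda_{j,\lfloor 2^jx\rfloor}^\beta|^p\,\mathrm{d}x\le S_{j,\beta}$ and on the summability over $(j,\beta)$ of the weight $\alpha_j/\max\{1,|\beta|^{N+1}\}$ (the $\ell^1$ hypothesis in $j$, the multi-index count $\#\{|\beta|=n\}=O(n^{N-1})$ in $\beta$). The only difference is one of packaging: you fold everything into a single $L^1$ majorant $G$ and read the estimate off its a.e.\ finiteness, whereas the paper applies Markov's inequality to the sets $\Gamma_{j,\beta}=\{x:U_{j,\beta}(x)\ge \max\{1,|\beta|^{N+1}\}\,\alpha_j^{-1}U^{j,\beta}\}$, invokes Borel--Cantelli, and then handles the finitely many remaining index pairs with a crude bound --- two presentations of the same ``summable integrals, hence a.e.\ finite'' idea, with yours arguably a touch tidier since it sidesteps the two-case split.
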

\begin{proof}
For the sake of convenience, we use the following notations
\begin{align}
U_{j,\beta}(x):=2^{j N}|\lambda_{j,\lfloor 2^j x\rfloor}^\beta|^p\,\,~~~~~~~~\,\,\text{ and }\,\,~~~~\,\,~~~~U^{j,\beta}:=\sum_{k\in\N^N}|\lambda_{j,k}^\beta|^p. \nonumber
\end{align}
We have to prove that
$$ U_{j,\beta}(x)\leq C\frac{\max\{1,|\beta|^{N+1}\}}{\alpha_j}\,U^{j,\beta}, $$
for a.e. $x\in[1,2]^{N}$ and any $(j,\beta)\in\N\times\N^N$. By iterated applications of Lemma \ref{ell1ekiv}, we have
\begin{align}
\int_{[1,2]^{N}}U_{j,\beta}(x)\,\mathrm{d}x\leq U^{j,\beta}. \label{LP:condens}
\end{align}
Now, define
$$ \Gamma_{j,\beta}:=\left\{x\in[1,2]^{N}:U_{j,\beta}(x)\geq \frac{\max\{1,|\beta|^{N+1}\}}{\alpha_j}\,U^{j,\beta}\right\}. $$
Then, applying Markov's inequality and using \eqref{LP:condens}, we have
$$ \mathcal{L}_{N}(\Gamma_{j,\beta})\leq \frac{\alpha_j}{\max\{1,|\beta|^{N+1}\}} \qquad{\mbox{ for any }}~~(j,\beta)\in\N\times\N^{N}. $$
In turn, this gives
$$ \sum_{\beta\in\N^N}\sum_{j\geq0}\mathcal{L}_{N}(\Gamma_{j,\beta})<\infty. $$
Therefore, we can apply the Borel-Cantelli lemma and deduce that there exists $j_0,\beta_0\geq0$ such that
$$ U_{j,\beta}(x)\leq \frac{\max\{1,|\beta|^{N+1}\}}{\alpha_j}\,U^{j,\beta}, $$
for any $j>j_0$ and/or $|\beta|>\beta_0$ and a.e. $x\in[1,2]^{N}$. On the other hand, for any $j\leq j_0$ and $|\beta|\leq\beta_0$ we have
$$ U_{j,\beta}(x)\leq 2^{j_0N}\max\{1,|\beta|^{N+1}\}\frac{\max_{0\leq j\leq j_0}\alpha_j}{\alpha_j}\,U^{j,\beta}. $$
This completes the proof.
\end{proof}

\subsection{Some useful sequences}
We now construct some key sequences which will be at the crux of the proofs of Theorems \ref{THEOREM}, \ref{TH:BMO} and \ref{TH:WEIGHT33}.
\begin{lemma}\label{LE:CTR}
There exists a sequence $(\zeta_k)_{k\geq0}\subset\R_+$ satisfying
\begin{align}
\sup_{j\geq0}\bigg(\frac{1}{2^j}\sum_{2^j\leq k<2^{j+1}}\zeta_k\bigg)\leq 1, \label{LE:HYPO}
\end{align}
and such that
\begin{align}
\sup_{j\geq 0}\,\zeta_{\lfloor 2^jx\rfloor}=\infty \qquad{\mbox{ for all }}~~x\in [1,2). \label{LE:CONCL}
\end{align}
\end{lemma}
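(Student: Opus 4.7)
The plan is an explicit ``moving bump'' construction. Partition the levels $j \geq 1$ into consecutive groups $G_n := \{2^n, 2^n + 1, \ldots, 2^{n+1} - 1\}$ indexed by $n \geq 0$, each containing exactly $2^n$ levels. For each $j \in G_n$ write $j = 2^n + i$ with $i \in \{0, \ldots, 2^n - 1\}$, and place a single bump of constant height $2^n$ on the sub-block
$$I_{n,i} := \bigl[2^j + i \cdot 2^{j-n},\; 2^j + (i+1)\cdot 2^{j-n}\bigr) \subset [2^j, 2^{j+1}),$$
setting $\zeta_k := 2^n$ for $k \in I_{n,i}$ and $\zeta_k := 0$ elsewhere (in particular $\zeta_0 := \zeta_1 := 0$, corresponding to level $j = 0$).

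Verifying condition \eqref{LE:HYPO} is then immediate: the bump width is the positive integer $2^{j-n}$ and its height is $2^n$, so the total mass on $[2^j, 2^{j+1})$ equals exactly $2^n \cdot 2^{j-n} = 2^j$, which means the average equals $1$ at every level $j \geq 1$ and $0$ at $j = 0$.

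For \eqref{LE:CONCL}, fix $x \in [1, 2)$ and an arbitrary $n \geq 0$. Set $i := \lfloor 2^n(x-1)\rfloor \in \{0, \ldots, 2^n - 1\}$ and $j := 2^n + i \in G_n$. Since $2^n(x - 1) \in [i, i+1)$ and $2^{j-n}$ is a positive integer, the identity $\lfloor 2^j x\rfloor - 2^j = \lfloor 2^j(x - 1)\rfloor$ yields
$$\lfloor 2^j x \rfloor - 2^j \in \bigl[i \cdot 2^{j-n},\; (i+1) \cdot 2^{j-n}\bigr),$$
i.e.\ $\lfloor 2^j x\rfloor \in I_{n,i}$, and hence $\zeta_{\lfloor 2^j x \rfloor} = 2^n$. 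Letting $n \to \infty$ forces $\sup_{j \geq 0} \zeta_{\lfloor 2^j x\rfloor} = +\infty$, as required.

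The only genuine conceptual difficulty I see is the quantifier on $x$: a Borel--Cantelli / randomized scheme (putting bumps of height $\sim n$ on randomly chosen fractions of each dyadic block of relative size $\sim 1/n$) would readily give the conclusion for \emph{almost every} $x$, but the statement demands \emph{every} $x \in [1, 2)$. The structural identity $|G_n| = 2^n = $ (number of dyadic sub-intervals of $[1,2)$ of length $2^{-n}$) is exactly what makes a deterministic sweep through $G_n$ visit every such sub-interval once, and this is the key observation that converts the almost-everywhere statement into an everywhere statement.
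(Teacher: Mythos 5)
Your proof is correct. Let me verify the key points and then compare with the paper's approach.

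Condition \eqref{LE:HYPO}: for $j = 2^n + i \in G_n$ you have $j - n \geq 2^n - n \geq 0$, so $2^{j-n}$ is a positive integer; since $i + 1 \leq 2^n$ the interval $I_{n,i}$ has integer endpoints and sits inside $[2^j, 2^{j+1})$; it contains exactly $2^{j-n}$ integers, giving total mass $2^n \cdot 2^{j-n} = 2^j$, average $1$. Condition \eqref{LE:CONCL}: the identity $\lfloor 2^j x\rfloor = 2^j + \lfloor 2^j(x-1)\rfloor$ holds because $2^j$ is an integer; then $2^n(x-1) \in [i, i+1)$ gives $2^j(x-1) \in [i\cdot 2^{j-n}, (i+1)\cdot 2^{j-n})$, and since both endpoints are integers the floor stays in that interval, so $\lfloor 2^j x\rfloor \in I_{n,i}$ and $\zeta_{\lfloor 2^j x\rfloor} = 2^n$. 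Letting $n \to \infty$ gives the conclusion for \emph{every} $x \in [1,2)$.

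Both you and the paper use the same underlying ``moving bump'' idea: on the $j$-th dyadic block $[\![2^j, 2^{j+1}-1]\!]$, place a bump of height $\sim j$ occupying a relative proportion $\sim 1/j$ of the block, and schedule the bump positions so that, as $j$ runs through $\mathbb{N}$, the corresponding windows $\{x \in [1,2) : \zeta_{\lfloor 2^j x\rfloor} \neq 0\}$ sweep through $[1,2)$ infinitely often (the paper uses height $j$ and width $\lfloor 2^j/j\rfloor$; you use height $2^n$ and width $2^{j-n}$ for $j \in G_n$, which is the same scaling since $j \sim 2^n$). The real difference is in the scheduling. The paper describes an informal recursive shifting procedure whose correctness rests on the divergence of $\sum_j 2^{-j}\lfloor 2^j/j\rfloor$ plus an ad hoc rule for handling the boundary $x = 2$ at the end of each sweep. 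You replace that with a closed-form dyadic schedule: the $2^n$ levels of $G_n$ are in bijection with the $2^n$ dyadic sub-intervals of $[1,2)$ of length $2^{-n}$, and each level targets exactly one sub-interval. This buys you a fully explicit formula for $\zeta_k$, no boundary cases, and a one-line proof that \emph{every} $x$ is hit in every group $G_n$, which makes the passage from an almost-everywhere statement to an everywhere statement completely transparent (your closing remark identifies exactly the structural coincidence $|G_n| = 2^n$ that enables this). One small caveat if you intend to substitute your sequence into the paper downstream: the proof of Corollary \ref{COR:CTR2} uses the precise value $\zeta_{\lfloor 2^j x\rfloor} = j$ on the hitting set $J_x$, whereas your construction gives $\zeta_{\lfloor 2^j x\rfloor} = 2^n \in (j/2, j]$ there; this only changes constants and does not affect the argument, but the formula $\xi_k = j^{-\sqrt{p/q}}\zeta_k$ there would produce $\xi_{\lfloor 2^j x\rfloor} \geq j^{\alpha}/2$ rather than exactly $j^{\alpha}$.
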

\begin{proof}
Let us first construct an auxiliary sequence satisfying \eqref{LE:HYPO}.

Let $(\lambda_k)_{k\geq0}$ be a sequence such that $\lambda_0=\lambda_1=0$ and such that, for any $j\geq 1$, the $\lfloor \frac{2^j}{j}\rfloor$ first terms of the sequence $(\lambda_k)_{k\geq0}$ on the discrete interval $[\![2^j,2^{j+1}-1]\!]$ have value $j$ and the remaining terms are all equal to zero. Then, for any $j\geq1$, we have
$$ \frac{1}{2^j}\sum_{2^j\leq k<2^{j+1}}\lambda_k=\frac{j+\cdots+j+0+\cdots+0}{2^j}=\frac{j\lfloor \frac{2^j}{j}\rfloor}{2^j}\leq 1. $$
For the sake of convenience, we set
$$T_j:=[\![2^j,2^{j+1}-1]\!] \qquad{\mbox{ for any }}~~j\geq0. $$
We will construct a sequence $(\zeta_k)_{k\geq0}$ satisfying both \eqref{LE:HYPO} and \eqref{LE:CONCL} by rearranging the terms of $(\lambda_k)_{k\geq0}$. To this end, we follow the following procedure.
\begin{figure}
\centering
\includegraphics[scale=0.7]{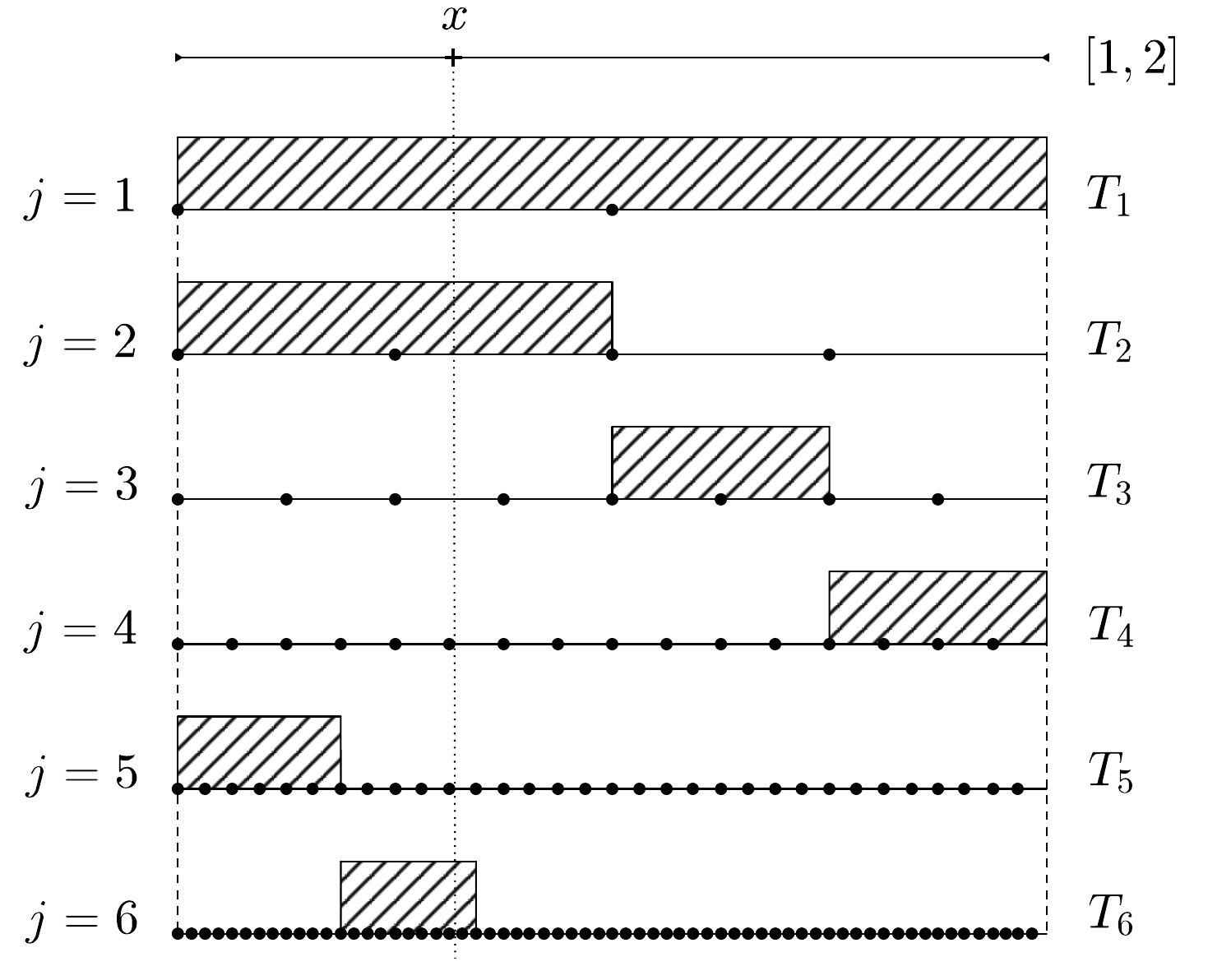} \\
\caption{Construction of the first terms of $(\zeta_k)_{k\geq0}$.} 
The hatched zone corresponds to the values of $x$ for which $\zeta_{\lfloor2^jx\rfloor}$ \\
takes nonzero values.
\label{FIG:SEQ}
\end{figure}
For $k\in[\![0,2^{3}-1]\!]$ we impose $\zeta_k=\lambda_k$. For $j=3$, we shift the values of $(\lambda_k)_{k\geq0}$ on $T_3$ in such a way that the smallest $x\in[1,2)$ such that $\zeta_{\lfloor 2^3x\rfloor}$ is nonzero coincides with the limit superior of the set of all $z\in[1,2)$ such that $\zeta_{\lfloor2^2z\rfloor}$ is nonzero. For $j=4$, we shift the values of $(\lambda_k)_{k\geq0}$ on $T_4$ in such a way that the smallest $x\in[1,2]$ such that $\zeta_{\lfloor2^4x\rfloor}$ is nonzero coincides with the limit superior of the set of all $z\in[1,2)$ such that $\zeta_{\lfloor2^3z\rfloor}$ is nonzero, and so on. When the range of nonzero terms has reached the last term on $T_j$ for some $j\geq1$, we start again from $T_{j+1}$ and set $\zeta_k=\lambda_k$ on $T_{j+1}$, and we repeat the above procedure. See Figure \ref{FIG:SEQ} for a visual illustration.

If, for some $j\geq0$, it happens that the above shifting of the $\lambda_k$'s on $T_j$ exceeds $T_j$, then we shift the $\lambda_k$'s on $T_j$  in such a way that the limit superior of the set of all $x\in[1,2]$ for which $\zeta_{\lfloor 2^jx\rfloor}$ is nonzero coincides with $x=2$.

Note that this procedure is well-defined because the proportion of nonzero terms on each $T_j$ is $2^{-j}\lfloor\frac{2^j}{j}\rfloor$ which has a divergent series thus allowing us to fill as much "space" as needed.

Then, by construction, for any $x\in[1,2)$ there are infinitely many values of $j\geq0$ such that $\zeta_{\lfloor2^jx\rfloor}=j$. Consequently, \eqref{LE:CONCL} holds. Moreover, \eqref{LE:HYPO} is trivially satisfied.

This completes the proof.
\end{proof}
As an immediate corollary, we have
\begin{cor}\label{COR:CTR2}
Let $0<p<q\leq\infty$. Then, there exists a sequence $(\lambda_{j,k})_{j,k\geq0}\subset\R_+$ satisfying
\begin{align*}
\bigg(\sum_{j\geq0}\bigg(\sum_{k\geq0}\lambda_{j,k}^p\bigg)^{q/p}\bigg)^{1/q}<\infty, %\label{LE3:HYPO}
\end{align*}
(modification if $q=\infty$) and such that
\begin{align*}
\sup_{j\geq 0}\,2^{j/p}\lambda_{j,\lfloor 2^jx\rfloor}=\infty \qquad{\mbox{ for all }}~~x\in [1,2). %\label{LE3:CONCL}
\end{align*}
\end{cor}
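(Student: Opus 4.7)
The plan is to build $(\lambda_{j,k})$ by dyadically localizing and rescaling the sequence $(\zeta_k)$ produced by Lemma~\ref{LE:CTR}. With $T_j=[\![2^j, 2^{j+1}-1]\!]$ as in the proof of that lemma, I would set
\[
\lambda_{j,k} := 2^{-j/p}\, c_j\, \zeta_k\, \mathds{1}_{T_j}(k),
\]
for a positive sequence $(c_j)_{j\geq 1}$ to be chosen. Since $\lfloor 2^j x\rfloor\in T_j$ for every $x\in[1,2)$, one then has
\[
2^{j/p}\lambda_{j,\lfloor 2^j x\rfloor}=c_j\,\zeta_{\lfloor 2^j x\rfloor}\qquad\text{for every } x\in[1,2).
\]

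The key observation I would extract from the construction of Lemma~\ref{LE:CTR} is that on each $T_j$ the sequence $\zeta$ takes only the values $0$ and $j$ (the nonzero entries being the $\lfloor 2^j/j\rfloor$ copies of $j$ originally placed on $T_j$, merely permuted within $T_j$). As a consequence, the conclusion $\sup_j \zeta_{\lfloor 2^j x\rfloor}=\infty$ of that lemma is equivalent to saying that $\zeta_{\lfloor 2^j x\rfloor}=j$ for infinitely many $j$, for every $x\in[1,2)$, and it also gives the direct bound
\[
\sum_{k\geq 0}\lambda_{j,k}^p=c_j^p\, 2^{-j}\sum_{k\in T_j}\zeta_k^p=c_j^p\, 2^{-j}\,\lfloor 2^j/j\rfloor\, j^p\leq c_j^p\, j^{p-1}.
\]

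The problem therefore reduces to finding a positive sequence $(c_j)$ satisfying $\sup_j c_j\, j=\infty$ while keeping $\sum_{j\geq 1}(c_j^p\, j^{p-1})^{q/p}$ finite (with the obvious $\ell^\infty$ modification when $q=\infty$). The polynomial ansatz $c_j=j^{-\alpha}$ resolves this: the blow-up condition forces $\alpha<1$, while summability forces $\alpha>1-(1/p-1/q)$ when $q<\infty$ (resp.\ $\alpha\geq 1-1/p$ when $q=\infty$), and both ranges are non-empty precisely because $p<q$. Any admissible $\alpha$ then produces the required sequence. The only non-automatic step is the extraction of the refined structural fact $\zeta_k\in\{0,j\}$ on $T_j$ from the construction; without it, Lemma~\ref{LE:CTR} only yields the mean control $\tfrac{1}{2^j}\sum_{k\in T_j}\zeta_k\leq 1$, which by itself is insufficient to estimate $\sum_{k\in T_j}\zeta_k^p$ for $p\neq 1$.
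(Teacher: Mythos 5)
Your proof is correct, and the route is essentially the paper's — localize $(\zeta_k)$ to $T_j$, insert the dyadic weight $2^{-j/p}$, and tune a $j$-dependent damping factor so that summability and blow-up coexist — but with one instructive variation. The paper sets $\lambda_{j,k}=2^{-j/p}\zeta_k^{1/p}\mathds{1}_{T_j}(k)$ (and, for $q<\infty$, first damps $\zeta_k$ to $\xi_k=j^{-\sqrt{p/q}}\zeta_k$), so that $\lambda_{j,k}^p$ is \emph{linear} in $\zeta_k$; the block averages \eqref{LE:HYPO} from Lemma \ref{LE:CTR} then apply directly, and no estimate of $\sum_{k\in T_j}\zeta_k^p$ is ever needed. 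You instead set $\lambda_{j,k}=2^{-j/p}c_j\,\zeta_k\,\mathds{1}_{T_j}(k)$, which — as you correctly flag — forces you to extract from the construction the refined structural fact that $\zeta$ takes only the values $0$ and $j$ on $T_j$ with exactly $\lfloor 2^j/j\rfloor$ nonzero entries; the statement of Lemma \ref{LE:CTR} alone (mean control plus blow-up) would not suffice. That extraction is legitimate, and in fairness the paper's $q<\infty$ argument also dips into the construction (it uses that $\zeta_{\lfloor 2^jx\rfloor}=j$ on an infinite set $J_x$, not merely that $\sup_j\zeta_{\lfloor2^jx\rfloor}=\infty$), so both proofs read off the same structural information; the $\zeta_k^{1/p}$ normalization simply makes the summability half self-contained. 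Your polynomial choice $c_j=j^{-\alpha}$ with $1-(1/p-1/q)<\alpha<1$ (resp. $1-1/p\leq\alpha<1$ when $q=\infty$) is admissible precisely because $p<q$, and the ensuing computations are all correct.
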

\begin{proof}
When $q=\infty$, it suffices to set
\begin{align}
\lambda_{j,k}=\left\{
\begin{array}{l l}
2^{-j/p}\zeta_k^{1/p} & \text{if }~2^j\leq k<2^{j+1}, \vspace{3pt}\\
0 & \text{otherwise},
\end{array}
\right. \label{casinfiniBZZ}
\end{align}
where $(\zeta_k)_{k\geq0}$ is the sequence constructed at Lemma \ref{LE:CTR}.

When $q<\infty$, we simply replace $(\zeta_k)_{k\geq0}$ in \eqref{casinfiniBZZ} by $(\xi_k)_{k\geq0}$ where
$$ \xi_k=j^{-\sqrt{\frac{p}{q}}} \zeta_k \qquad{\mbox{ for any }}~~k\in[\![2^j,2^{j+1}-1]\!]~~\text{ with }~~j\geq1, $$
and $\xi_0=\xi_1=0$. Then, we obtain
\begin{align}
\sum_{j\geq1}\bigg(\frac{1}{2^j}\sum_{2^j\leq k<2^{j+1}}\xi_k\bigg)^{q/p}=\sum_{j\geq1}\bigg(j^{-\sqrt{\frac{p}{q}}} \cdot \frac{1}{2^j}\sum_{2^j\leq k<2^{j+1}}\zeta_k\bigg)^{q/p}\leq\sum_{j\geq1}j^{-\sqrt{\frac{q}{p}}}<\infty. \nonumber
\end{align}
Moreover, by construction of $(\zeta_k)_{k\geq0}$, for any $x\in [1,2)$, there is a countably infinite set $J_x\subset\N$ such that $\zeta_{\lfloor2^jx\rfloor}=j$ for any $j\in J_x$. In particular,
$$ \xi_{\lfloor2^jx\rfloor}=j^{\alpha} \qquad{\mbox{ for any }}~~j\in J_x~~\text{ and }~~x\in[1,2), $$
where $\alpha=1-\sqrt{p/q}>0$. Thus,
$$ \sup_{j\geq0}\,2^j\lambda_{j,\lfloor2^jx\rfloor}^p=\sup_{j\geq0}\,\xi_{\lfloor2^jx\rfloor}\geq\sup_{j\in J_x}\,j^{\alpha}=\infty \qquad{\mbox{ for any }}~~x\in[1,2), $$
which is what we had to show.
\end{proof}
We conclude this section by a weighted version of Corollary \ref{COR:CTR2}.
\begin{lemma}\label{LE:CTR:WEIGHT3}
Let $0<p<q\leq\infty$. Let $\Psi$ be an admissible function that does not satisfy \eqref{H:WEIGHT}. If $q<\infty$ and $\Psi$ is increasing assume, in addition, that
$$ \chi=\frac{qp}{q-p}<\frac{1}{c_\infty}, $$
where $c_\infty$ is as in Theorem \ref{TH:WEIGHT33}.
Then, there exists a sequence $(\lambda_{j,k})_{j,k\geq0}\subset\R_+$ such that
\begin{align}
\bigg(\sum_{j\geq0}\bigg(\sum_{k\geq0}\lambda_{j,k}^p\bigg)^{q/p}\bigg)^{1/q}<\infty, \label{C2W}
\end{align}
(modification if $q=\infty$) and
\begin{align}
\bigg(\sum_{j\geq0}\,2^{j\frac{q}{p}}\lambda_{j,\lfloor2^jx\rfloor}^q\Psi(2^{-j})^q\bigg)^{1/q}=\infty \qquad{\mbox{ for all }}~~x\in[1,2), \label{C3W}
\end{align}
(modification if $q=\infty$).
\end{lemma}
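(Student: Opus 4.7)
The plan is to take the ansatz $\lambda_{j,k}:=a_j\mathds{1}_{S_j}(k)$, where $S_j\subset T_j:=[\![2^j,2^{j+1}-1]\!]$ is a set of cardinality $n_j$ placed by the shifting procedure from the proof of Lemma \ref{LE:CTR}, adapted to arbitrary block sizes. As soon as $\sum_{j\ge 0}n_j/2^j=\infty$, that procedure guarantees that for every $x\in[1,2)$ one has $\lfloor 2^jx\rfloor\in S_j$ for infinitely many $j$. It then remains to choose $a_j$ and $n_j$ so that both \eqref{C2W} and \eqref{C3W} hold.

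In the case $q=\infty$ (so $\chi=p$), a standard Abel--Dini-type argument applied to the partial sums $\Sigma_j:=\sum_{i\le j}\Psi(2^{-i})^p$, with $M_j:=\Sigma_j^{1/(2p)}$, produces a sequence $M_j\to\infty$ satisfying $\sum_j\Psi(2^{-j})^p/M_j^p=\infty$. I would then set $n_j:=\lfloor 2^j\Psi(2^{-j})^p/M_j^p\rfloor$ and $a_j:=2^{-j/p}\Psi(2^{-j})^{-1}M_j$. One checks at once that $a_j^pn_j\le 1$, which is \eqref{C2W}, while $2^{j/p}\lambda_{j,\lfloor 2^jx\rfloor}\Psi(2^{-j})=M_j\mathds{1}_{S_j}(\lfloor 2^jx\rfloor)$, whose supremum over $j$ is $\infty$ because $M_j\to\infty$ and the indicator equals $1$ for infinitely many $j$.

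For $q<\infty$, I would take $\Sigma_j:=\sum_{i=1}^j\Psi(2^{-i})^\chi$, $\mu_j:=\Psi(2^{-j})^\chi/\Sigma_j\le 1$, $n_j:=\lfloor 2^j\mu_j\rfloor$, and $a_j:=2^{-j/p}\Psi(2^{-j})^{-1}$. Abel--Dini again yields $\sum_j\mu_j=\infty$, so the shifting applies. The algebraic identity $\chi q/p-q=\chi$ (a direct consequence of $\chi=qp/(q-p)$) gives
\[
\sum_j(a_j^pn_j)^{q/p}\le\sum_j\mu_j^{q/p}\Psi(2^{-j})^{-q}=\sum_j\frac{\Psi(2^{-j})^\chi}{\Sigma_j^{q/p}},
\]
which is finite by integral comparison since $q/p>1$ and $\Sigma_j\to\infty$; this proves \eqref{C2W}. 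Each summand in \eqref{C3W} simplifies to $\mathds{1}_{S_j}(\lfloor 2^jx\rfloor)$, and by the shifting infinitely many of these equal $1$, so the sum diverges.

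The main delicate step is the adaptation of the shifting algorithm from Lemma \ref{LE:CTR} to arbitrary block sizes $n_j\in[\![0,2^j]\!]$ satisfying only $\sum_jn_j/2^j=\infty$: the identical chaining-and-refilling procedure works, but one must verify that $n_j\ge 1$ for all large $j$, so that every scale actually carries a block to be linked to the next. This is where the hypothesis $\chi<1/c_\infty$ (in the increasing case, $q<\infty$) is exploited: combined with the admissibility of $\Psi$, it yields $\Psi(2^{-j})^\chi\gtrsim j^{-1+\delta}$ for some $\delta>0$, hence $\mu_j\gtrsim j^{-1-\delta'}$ and $n_j\ge 1$ eventually.
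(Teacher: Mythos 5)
Your construction is essentially the same as the paper's: place blocks $S_j\subset T_j:=[\![2^j,2^{j+1}-1]\!]$ of size $n_j$ via the chaining procedure of Lemma \ref{LE:CTR}, and tune heights and sizes through an Abel--Dini quotient $\Psi(2^{-j})^\chi/\Sigma_j$. Two remarks.

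First, there is a genuine gap in your $q=\infty$ case. With $M_j:=\Sigma_j^{1/(2p)}$ the block size $n_j=\lfloor 2^j\Psi(2^{-j})^p/M_j^p\rfloor$ can exceed $|T_j|=2^j$. Indeed, the quotient $\Psi(2^{-j})^p/M_j^p=\beta_j/\Sigma_j^{1/2}$ need not be bounded by $1$: if $\Psi$ is decreasing with $\Psi(2^{-j})\sim j^{b}$ and $bp>1$ (such $\Psi$ is admissible and violates \eqref{H:WEIGHT} with $\chi=p$), then $\Sigma_j\sim j^{bp+1}$ and $\beta_j/\Sigma_j^{1/2}\sim j^{(bp-1)/2}\to\infty$, so $n_j>2^j$ eventually and the block cannot be placed inside $T_j$. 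The repair is to take $M_j:=\Sigma_j^{1/p}$, which is the paper's implicit choice: then $\Psi(2^{-j})^p/M_j^p=\beta_j/\Sigma_j\leq 1$ automatically, the Abel--Dini divergence $\sum_j\beta_j/\Sigma_j=\infty$ still holds ($\alpha=1$), $M_j\to\infty$, and your estimates $a_j^pn_j\leq 1$ and $2^{j/p}\lambda_{j,\lfloor 2^jx\rfloor}\Psi(2^{-j})=M_j\mathds{1}_{S_j}(\lfloor 2^jx\rfloor)$ go through unchanged.

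Second, your $q<\infty$ argument is actually cleaner than the paper's: the Abel--Dini convergence test gives $\sum_j\Psi(2^{-j})^\chi/\Sigma_j^{q/p}<\infty$ directly from $q/p>1$ and $\Sigma_j\to\infty$, whereas the paper bounds $\tilde\gamma_j\leq 1/(j+1)$ and then needs the hypothesis $q(1/p-c_\infty)>1$, i.e. $\chi<1/c_\infty$, to conclude. Your explanation of where $\chi<1/c_\infty$ enters is, however, mistaken: it is not needed to guarantee $n_j\geq 1$. Admissibility alone gives $\Psi(2^{-j})^\chi$ polynomial bounds in $j$ (so $\mu_j\gtrsim j^{-C}\gg 2^{-j}$), and in any event the chaining procedure tolerates $n_j=0$ at some scales so long as $\sum_j n_j/2^j=\infty$, which follows from $\sum_j\mu_j=\infty$ since $\sum_j 2^{-j}<\infty$. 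In the paper, $\chi<1/c_\infty$ is used (only) to control $\sum_j\tau_j^{q/p}$; your Abel--Dini bound simply makes that hypothesis superfluous for this lemma.
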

\begin{proof}
The proof is essentially the same as in the unweighted case with minor changes that we shall now detail.

Let us begin with the case $q=\infty$. Let $\beta_j:=\Psi(2^{-j})^p$. Since $\beta_j>0$ and $(\beta_j)_{j\geq0}\notin\ell^1(\N)$ we may find another positive sequence $(\gamma_j)_{j\geq0}$ which has a divergent series and such that
$$ \frac{\beta_j}{\gamma_j}\to\infty \qquad{\mbox{ as }}~~j\to\infty, $$
i.e. $(\gamma_j)_{j\geq0}$ diverges slower than $(\beta_j)_{j\geq0}$. Take, for example
$$ \gamma_j=\frac{\beta_j}{\sum_{k=0}^j\beta_k}, $$
see e.g. \cite{Ash}. Note that $0<\gamma_j\leq1$ for all $j\geq0$. Let $(\varrho_k)_{k\geq0}$ be a sequence such that $\varrho_0=\varrho_1=0$ and such that, for any $j\geq 1$, the $\lfloor 2^j\gamma_j\rfloor$ first terms of the sequence $(\varrho_k)_{k\geq0}$ on the discrete interval $T_j:=[\![2^j,2^{j+1}-1]\!]$ have value $\frac{1}{\gamma_j}$ and the remaining terms are all equal to zero. Then, for any $j\geq1$, we have
$$ \frac{1}{2^j}\sum_{2^j\leq k<2^{j+1}}\varrho_k=\frac{\frac{1}{\gamma_j}+\cdots+\frac{1}{\gamma_j}+0+\cdots+0}{2^j}=\frac{\frac{1}{\gamma_j}\lfloor 2^j\gamma_j\rfloor}{2^j}\leq 1. $$
Now, since the proportion of nonzero terms on each $T_j$ is $2^{-j}\lfloor 2^j\gamma_j\rfloor$ which has a divergent series, we may apply to $(\varrho_j)_{j\geq0}$ the same rearrangement as in the proof of Lemma \ref{LE:CTR}. That is, we can construct a sequence $(\varrho_j^*)_{j\geq0}$ such that
\begin{align*}
\frac{1}{2^j}\sum_{2^j\leq k<2^{j+1}}\varrho_k^*\leq1 \qquad{\mbox{ for all }}~~j\geq0, %\label{M1}
\end{align*}
and for any $x\in[1,2)$ there is a countably infinite set $J_x\subset\N$ such that
\begin{align*}
\beta_j\varrho_{\lfloor2^jx\rfloor}^*= \frac{\beta_j}{\gamma_j}=\sum_{k=0}^j\beta_k \qquad{\mbox{ for all }}~~j\in J_x, %\label{M2}
\end{align*}
i.e. we have
$$ \sup_{j\geq0}\,\beta_j\varrho_{\lfloor2^jx\rfloor}^*\geq \lim_{j\to\infty \atop j\in J_x}\,\sum_{k=0}^j\beta_k=\infty. $$
Therefore, letting
\begin{align}
\lambda_{j,k}=\left\{
\begin{array}{l l}
2^{-j/p}(\varrho_k^*)^{1/p} & \text{if }~2^j\leq k<2^{j+1}, \vspace{3pt}\\
0 & \text{otherwise},
\end{array}
\right. \nonumber
\end{align}
we obtain a sequence satisfying both \eqref{C2W} and \eqref{C3W}.

Let us now prove the lemma when $q<\infty$. Notice that if $\Psi$ is either constant or decreasing there is nothing to prove since the result is a consequence of Corollary \ref{COR:CTR2}. Hence, we may assume that $\Psi$ is increasing. By our assumptions, we have
$$ \frac{1}{p}\log_2\frac{\beta_j}{\beta_{2j}}\leq c_\infty<\frac{1}{\chi}, $$
which implies that
\begin{align}
\beta_j\leq 2^{c_\infty p}\beta_{2j}\leq\cdots\leq2^{kc_\infty p}\beta_{2^kj} \qquad{\mbox{ for any }}~~k\in\N. \label{prop:betajcroiss}
\end{align}
By Cauchy's condensation test, we have
$$ 2\sum_{j\geq0}\beta_j^{\frac{q}{q-p}}\geq\sum_{j\geq0}2^j\beta_{2^j}^{\frac{q}{q-p}}\geq \beta_1^{\frac{q}{q-p}}\sum_{j\geq0}2^{j(1-c_\infty\chi)}=\infty. $$
Thus, we may infer as above that the following positive sequence has divergent series:
$$ \tilde{\gamma}_j=\frac{\beta_j^{\frac{q}{q-p}}}{\sum_{k=0}^j\beta_k^{\frac{q}{q-p}}}. $$
Notice that $\tilde{\gamma}_j\leq1/(j+1)$.
Now define $(\tau_j)_{j\geq0}$ by $\tau_j:=\tilde{\gamma}_j/\beta_j$.
Since $2^{-c_\infty p}\beta_1\leq j^{c_\infty p}\beta_j$ for any $j\geq1$ (by \eqref{prop:betajcroiss} and the monotonicity of $\beta_j$), our assumptions on $\chi$ and $c_\infty$ then imply
$$ \sum_{j\geq0}\tau_j^{q/p}\leq \tau_0^{q/p}+\sum_{j\geq1}\frac{\beta_j^{-q/p}}{(j+1)^{q/p}}\leq\tau_0^{q/p}+\sum_{j\geq1}\frac{2^{c_\infty q}\beta_1^{-q/p}}{j^{q(1/p-c_\infty)}}<\infty. $$
%where we have used the fact that $\tilde{\gamma}_j\leq1/(j+1)$.
%$$\tau_j^{q/p}\leq \frac{\beta_j^{-q/p}}{j^{q/p}}\leq \frac{1}{j^{q(1/p-c_\infty)}}\in\ell^1(\N).$$
The conclusion now follows by letting $\tilde{\lambda}_{j,k}:=\tau_j^{1/p}\lambda_{j,k}$ where $\lambda_{j,k}$ is the sequence constructed above with $\tilde{\gamma}_j$ instead of $\gamma_j$. Indeed, we have
$$ \sum_{j\geq0}\bigg(\sum_{k\geq0}\tilde{\lambda}_{j,k}^p\bigg)^{q/p}\leq \sum_{j\geq0}\tau_j^{q/p}<\infty, $$
and, for each $x\in[1,2)$, there is a countably infinite set $\tilde{J}_x\subset\N$ such that
$$ 2^{j/p}\tilde{\lambda}_{j,\lfloor2^jx\rfloor}\beta_j^{1/p}=2^{j/p}\,\tau_j^{1/p}\left(\frac{\beta_j}{\tilde{\gamma}_j}\right)^{1/p}2^{-j/p}=1 \qquad{\mbox{ for any }}~~j\in \tilde{J}_x. $$
Therefore, $(2^{j/p}\tilde{\lambda}_{j,\lfloor2^jx\rfloor}\beta_j^{1/p})_{j\geq0}\notin\ell^{q}(\N)$.
This completes the proof.
\end{proof}

\section{General estimates}\label{SE:ESTIMATE}

Throughout this section we will write $x\in\R^N$ as $x=(x_1,...,x_N)=(x',x'')$ with $x'\in\R^d$, $x''\in\R^{N-d}$ and, similarly, $m=(m',m'')\in\Z^N$ and $\beta=(\beta',\beta'')\in\N^N$. Also, we set
$$ \mathcal{D}:=\{0,1\}^{N-d}. $$
Let $\psi\in C_0^\infty(\mathbb{R}^N,[0,1])$ be such that $\mathrm{supp}(\psi)\subset B_1$ and that
\[2^{-\nu(s-\frac{N}{p})}\psi^\beta(2^\nu x-m),\]
are $(s,p)$-$\beta$-quarks. Also, we assume that $\psi$ has the product structure
\begin{align}
\psi(x_1,...,x_N)=\psi(x_1)\,...\,\psi(x_N). \label{prodstruc}
\end{align}
Let $\varrho>0$ and $f\in B_{p,q}^s(\mathbb{R}^N)$. Then, by Theorem \ref{BspqQuark}, there are coefficients $\lambda_{\nu,m}^\beta$ such that
\begin{align}
&f(x)=\sum_{\beta\in\mathbb{N}^N}\sum_{\nu=0}^{\infty}\sum_{m\in\mathbb{Z}^N}\lambda_{\nu,m}^\beta2^{-\nu(s-\frac{N}{p})}\psi^\beta(2^\nu x-m). \label{rep3000}
\end{align}
We can further assume that
\begin{align}
\|f\|_{B_{p,q}^s(\mathbb{R}^N)}&\sim \sup_{\beta\in\mathbb{N}^N}2^{\varrho|\beta|}\bigg(\sum_{\nu\geq0}\bigg(\sum_{m\in\mathbb{Z}^N}|\lambda_{\nu,m}^\beta|^p \bigg)^{q/p}\bigg)^{1/q}, \label{2norm}
\end{align}
i.e. that $\lambda_{\nu,m}^\beta=\lambda_{\nu,m}^\beta(f)$ is an \emph{optimal subatomic decomposition} of $f$.
Note, however, that the optimality of the decomposition $\lambda_{\nu,m}^\beta(f)$ depends on the choice of $\varrho>0$ (this can be seen from \cite[Corollary 2.12, p.23]{Trieb}). Of course, by Theorem \ref{BspqQuark}, we still have
\begin{align*}
\|f\|_{B_{p,q}^s(\mathbb{R}^N)}&\lesssim \sup_{\beta\in\mathbb{N}^N}2^{\varrho'|\beta|}\bigg(\sum_{\nu\geq0}\bigg(\sum_{m\in\mathbb{Z}^N}|\lambda_{\nu,m}^\beta|^p \bigg)^{q/p}\bigg)^{1/q}, %\label{3norm}
\end{align*}
for any positive $\varrho'\ne\varrho$. Using \eqref{prodstruc} and \eqref{2norm}, we can decompose $f(\cdot,x'')$ as
\begin{align}
f(x',x'')&=\sum_{\nu\geq0}\sum_{\beta'\in\mathbb{N}^d}\sum_{m'\in\mathbb{Z}^d}b_{\nu,m'}^{\beta'}(\lambda,x'')2^{-\nu(s-\frac{d}{p})}\psi^{\beta'}(2^\nu x'-m'), \nonumber
\end{align}
where we have set
\begin{align}
b_{\nu,m'}^{\beta'}(\lambda,x''):=2^{\nu\frac{N-d}{p}}\sum_{\beta''\in\mathbb{N}^{N-d}}\sum_{m''\in\mathbb{Z}^{N-d}}\lambda_{\nu,m}^\beta\psi^{\beta''}(2^\nu x''-m''). \label{DEFb}
\end{align}
Then, defining
\begin{align}
J_{p,q}^{\varrho}(\lambda,x''):=\sup_{\beta'\in\mathbb{N}^d}2^{\varrho|\beta'|}\bigg(\sum_{\nu\geq0}\bigg(\sum_{m'\in\mathbb{Z}^d}|b_{\nu,m'}^{\beta'}(\lambda,x'')|^p\bigg)^{q/p}\bigg)^{1/q},
\label{DEFj}
\end{align}
we obtain
\[\|f(\cdot,x'')\|_{B_{p,q}^s(\mathbb{R}^d)}\lesssim J_{p,q}^{\varrho}(\lambda,x'').\]
In fact, we also have
\begin{align}
\|f(\cdot,x'')\|_{B_{p,q}^s(\mathbb{R}^d)}\lesssim J_{p,q}^{\varrho'}(\lambda,x''), \label{JJJ}
\end{align}
for any $\varrho'>0$. For the sake of convenience, we introduce some further notations. Given any $\delta\in\mathcal{D}$, we set
\begin{align}
b_{\nu,m'}^{\beta',\delta}(\lambda,x'')&:= 2^{\nu\frac{N-d}{p}}\sum_{\beta''\in\mathbb{N}^{N-d}}|\lambda_{\nu,m',\lfloor2^\nu x_{d+1}\rfloor+\delta_{d+1},...,\lfloor2^\nu x_N\rfloor+\delta_N}^\beta|, \label{newB} \\
J_{p,q}^{\varrho,\delta}(\lambda,x')&:=\sup_{\beta'\in\mathbb{N}^d}2^{\varrho|\beta'|}\bigg(\sum_{\nu\geq0}\bigg(\sum_{m'\in\mathbb{Z}^d}|b_{\nu,m'}^{\beta',\delta}(\lambda,x'')|^p\bigg)^{q/p}\bigg)^{1/q}. \label{newJ}
\end{align}
Notice that since $\mathrm{supp}(\psi^\beta)\subset B_1$, we have
\begin{align}
\psi^{\beta''}(2^\nu x''-m'')\ne0~~\Longrightarrow~~m_i\in\{\lfloor 2^\nu x_i\rfloor,\lfloor 2^\nu x_i\rfloor+1\}~~\text{for all}~~i\in[\![d+1,N]\!]. \nonumber%\label{simplif1}
\end{align}
And so, using \eqref{DEFb} and \eqref{DEFj}, we can derive the following bounds
\begin{align*}
b_{\nu,m'}^{\beta'}(\lambda,x'')&\leq \sum_{\delta\in\mathcal{D}}b_{\nu,m'}^{\beta',\delta}(\lambda,x''),
\end{align*}
and
\begin{align}
J_{p,q}^{\varrho}(\lambda,x'')&\leq c\sum_{\delta\in\mathcal{D}}J_{p,q}^{\varrho,\delta}(\lambda,x''), \label{JJ}
\end{align}
for some $c>0$ depending only on $\#\mathcal{D}$, $p$ and $q$.

As a consequence of \eqref{JJJ} and \eqref{JJ}, to estimate $\|f(\cdot,x'')\|_{B_{p,q}^s(\R^d)}$ from above one only needs to estimate the terms \eqref{newJ} from above, for each $\delta\in\mathcal{D}$.

Within these notations, we have the following

\begin{lemma}\label{dommm}
Let $N\geq2$, $0<p,q\leq\infty$, $\delta\in\mathcal{D}$ and $0<\varrho'<\varrho_0$. Then, with the notations above
\begin{align*}
J_{p,q}^{\varrho',\delta}(\lambda,x'')\lesssim \sup_{\beta\in\mathbb{N}^N}2^{\varrho_0|\beta|}\bigg(\sum_{\nu\geq0}\bigg(\sum_{m'\in\mathbb{Z}^d}|\lambda_{\nu,m',\lfloor2^\nu x''\rfloor+\delta}^\beta|^p 2^{\nu(N-d)}\bigg)^{q/p}\bigg)^{1/q}, %\label{dommmeq}
\end{align*}
for a.e. $x''\in\R^{N-d}$ (modification if $p=\infty$ and/or $q=\infty$) where
$$ \lfloor2^\nu x''\rfloor+\delta=(\lfloor2^\nu x_{d+1}\rfloor+\delta_{d+1},...\,,\lfloor2^\nu x_N\rfloor+\delta_N)\in\Z^{N-d}.$$
\end{lemma}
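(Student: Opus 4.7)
The plan is to transfer the summation over $\beta''\in\N^{N-d}$ hidden in the definition of $b^{\beta',\delta}_{\nu,m'}(\lambda,x'')$ past the $b_{p,q}$-quasi-norm, and then absorb the resulting exponential factor in $|\beta''|$ using the strict inequality $\varrho'<\varrho_0$. The only real tool needed is the quasi-triangle inequality for $b_{p,q}$; no Fubini-type, measurability or Borel--Cantelli argument enters at this stage. The \emph{a.e.} in the conclusion just reflects the measure-zero set of those $x''\in\R^{N-d}$ at which some $2^\nu x_i''$ happens to be an integer, where $\lfloor 2^\nu x_i''\rfloor$ jumps.

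First, I would denote by $M(x'')$ the right-hand side of the claimed inequality. Unraveling the supremum defining $M(x'')$ and writing $\beta=(\beta',\beta'')\in\N^d\times\N^{N-d}$ so that $|\beta|=|\beta'|+|\beta''|$, this immediately delivers, for a.e.\ $x''\in\R^{N-d}$ and every such $\beta$,
\[
\bigg(\sum_{\nu\geq0}\bigg(\sum_{m'\in\Z^d}2^{\nu(N-d)}\big|\lambda^{\beta',\beta''}_{\nu,m',\lfloor 2^\nu x''\rfloor+\delta}\big|^p\bigg)^{q/p}\bigg)^{1/q}\leq M(x'')\,2^{-\varrho_0|\beta'|}\,2^{-\varrho_0|\beta''|},
\]
which is a pointwise bound on each summand of the $\beta''$-series
\[
b^{\beta',\delta}_{\nu,m'}(\lambda,x'')=\sum_{\beta''\in\N^{N-d}}2^{\nu(N-d)/p}\big|\lambda^{\beta',\beta''}_{\nu,m',\lfloor 2^\nu x''\rfloor+\delta}\big|.
\]

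Next, I would invoke the $\eta$-triangle inequality $\|\sum_k x_k\|^\eta\leq\sum_k\|x_k\|^\eta$ for the quasi-norm $\|\cdot\|_{b_{p,q}}$, valid with $\eta:=\min\{1,p,q\}$. (This is the standard Aoki--Rolewicz type bound for quasi-Banach sequence spaces, extended to countable non-negative series by monotone convergence on finite truncations, with the obvious modifications when $p=\infty$ or $q=\infty$.) Combined with the previous step, this yields
\[
\|b^{\beta',\delta}(\lambda,x'')\|_{b_{p,q}}^\eta\leq\sum_{\beta''\in\N^{N-d}}M(x'')^\eta\,2^{-\varrho_0\eta|\beta'|}\,2^{-\varrho_0\eta|\beta''|}=C_0\,M(x'')^\eta\,2^{-\varrho_0\eta|\beta'|},
\]
where $C_0:=\sum_{\beta''\in\N^{N-d}}2^{-\varrho_0\eta|\beta''|}<\infty$ since $\varrho_0>0$ (the sum factorises across coordinates into convergent geometric series).

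To conclude, I would take the $\eta$-th root, multiply by $2^{\varrho'|\beta'|}$, and pass to the supremum over $\beta'\in\N^d$. Because $\varrho'<\varrho_0$, the factor $2^{(\varrho'-\varrho_0)|\beta'|}$ is bounded by $1$ uniformly in $\beta'$, so
\[
J^{\varrho',\delta}_{p,q}(\lambda,x'')=\sup_{\beta'\in\N^d}2^{\varrho'|\beta'|}\|b^{\beta',\delta}(\lambda,x'')\|_{b_{p,q}}\leq C_0^{1/\eta}\,M(x''),
\]
which is exactly the claim. The only mildly delicate point is the legitimacy of the $\eta$-triangle inequality applied to an infinite sum; all summands being non-negative, a monotone-convergence argument on partial sums over finite subsets of $\N^{N-d}$ handles this. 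In essence, the entire content of the lemma is that the slack afforded by choosing $\varrho_0$ strictly larger than $\varrho'$ is precisely what pays for the $\beta''$-summation produced when restricting the subatomic expansion of $f$ to a hyperplane.
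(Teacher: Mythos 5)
Your proof is correct and takes a cleaner route than the paper's. The paper's proof unpacks the $b_{p,q}$-quasi-norm explicitly: it substitutes $\lambda_{\nu,m}^\beta=2^{-\varrho_0|\beta|}\Lambda_{\nu,m}^\beta$, isolates the gain $2^{-a|\beta''|}$ with $a=\varrho_0-\varrho'$, and then carries the $\beta''$-sum past the $\ell^p(m')$-sum and the $\ell^q(\nu)$-sum by a cascade of H\"older/$\sup$-vs-$\sum$ manipulations, splitting $a$ into halves and quarters along the way. You instead keep $\|\cdot\|_{b_{p,q}}$ as a black box: from the very definition of the right-hand side $M(x'')$ as a supremum you read off a pointwise bound $\|A^{\beta',\beta''}\|_{b_{p,q}}\leq 2^{-\varrho_0|\beta'|}2^{-\varrho_0|\beta''|}M(x'')$ for each fixed $\beta''$, and then sum over $\beta''$ using the $\eta$-subadditivity of $\|\cdot\|_{b_{p,q}}$ with $\eta=\min\{1,p,q\}$. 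Both arguments exploit exactly the same slack $\varrho_0>\varrho'$ to render the $\beta''$-geometric series convergent, but yours replaces the layered H\"older estimates by a single application of the quasi-triangle inequality, which is shorter and structurally transparent. Two small remarks: the $\eta$-inequality you invoke is not quite Aoki--Rolewicz (which would only yield an equivalent quasi-norm and a possibly smaller exponent); rather, $b_{p,q}$ is genuinely an $\eta$-normed space with $\eta=\min\{1,p,q\}$, which can be checked directly by nesting the elementary $\eta$-subadditivity of $\ell^p$ and $\ell^q$ -- this is what you use, and it is correct, so you may as well cite the exact fact rather than the weaker general theorem. Also, your aside about the a.e.\ qualifier is harmless but not really needed: every step of both proofs is a pointwise inequality valid for all $x''$, and the ``a.e.'' in the statement is merely inherited from the surrounding context where $f(\cdot,x'')$ itself is only defined almost everywhere.
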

\begin{proof}
Suppose first that $p,q<\infty$. For simplicity, we will write
\begin{align*}
m_{\nu,\delta}'':=\lfloor2^\nu x''\rfloor+\delta.
\end{align*}
Using \eqref{newB} and \eqref{newJ} we get
\begin{align}
J_{p,q}^{\varrho',\delta}(\lambda,x'')&\lesssim \sup_{\beta'\in\mathbb{N}^d}2^{\varrho'|\beta'|}\bigg(\sum_{\nu\geq0}\bigg(\sum_{m'\in\mathbb{Z}^d}\bigg(\sum_{\beta''\in\mathbb{N}^{N-d}}|\lambda_{\nu,m',m_{\nu,\delta}''}^\beta|\bigg)^p 2^{\nu(N-d)}\bigg)^{q/p}\bigg)^{1/q}. \nonumber
\end{align}
Write $\lambda_{\nu,m}^\beta=2^{-\varrho_0|\beta|}\Lambda_{\nu,m}^\beta$ and let $a:=\varrho_0-\varrho'$. Then,
$$ 2^{\varrho'|\beta'|}|\lambda_{\nu,m}^\beta|=2^{\varrho'|\beta'|-\varrho_0|\beta|}|\Lambda_{\nu,m}^\beta|\leq2^{-a|\beta|}|\Lambda_{\nu,m}^\beta|\leq 2^{-a|\beta''|}|\Lambda_{\nu,m}^\beta|. $$
Hence, by H\"older's inequality we have
\begin{align}
J_{p,q}^{\varrho',\delta}(\lambda,x'')&\lesssim
\sup_{\beta'\in\mathbb{N}^d}\bigg(\sum_{\nu\geq0}\bigg(\sum_{m'\in\mathbb{Z}^d}\bigg(\sum_{\beta''\in\mathbb{N}^{N-d}}2^{-a|\beta''|}|\Lambda_{\nu,m',m_{\nu,\delta}''}^\beta|\bigg)^p 2^{\nu(N-d)}\bigg)^{q/p}\bigg)^{1/q} \nonumber \\
&\leq K_{a/2}\sup_{\beta'\in\mathbb{N}^d}\bigg(\sum_{\nu\geq0}\bigg(\sum_{m'\in\mathbb{Z}^d}\sup_{\beta''\in\mathbb{N}^{N-d}}2^{-p\frac{a}{2}|\beta''|}|\Lambda_{\nu,m',m_{\nu,\delta}''}^\beta|^p 2^{\nu(N-d)}\bigg)^{q/p}\bigg)^{1/q}, \nonumber
\end{align}
where we have used the notation
\begin{align*}
K_\alpha=\sum_{\beta''\in\mathbb{N}^{N-d}}2^{-\alpha|\beta''|} \qquad{\mbox{ for }}~~\alpha>0.
\end{align*}
Since the $\ell^p$ spaces are increasing with $p$, by successive applications of the H\"older inequality, we have
\begin{align}
J_{p,q}^{\varrho',\delta}&(\lambda,x'')\lesssim K_{a/2}\sup_{\beta'\in\mathbb{N}^d}\bigg(\sum_{\nu\geq0}\bigg(\sum_{\beta''\in\mathbb{N}^{N-d}}2^{-p\frac{a}{2}|\beta''|}\sum_{m'\in\mathbb{Z}^d}|\Lambda_{\nu,m',m_{\nu,\delta}''}^\beta|^p 2^{\nu(N-d)}\bigg)^{q/p}\bigg)^{1/q} \nonumber \\
&\leq K_{a/2}K_{p\frac{a}{4}}^{1/p}\sup_{\beta'\in\mathbb{N}^d}\bigg(\sum_{\nu\geq0}\bigg(\sup_{\beta''\in\mathbb{N}^{N-d}}2^{-p\frac{a}{4}|\beta''|}\sum_{m'\in\mathbb{Z}^d}|\Lambda_{\nu,m',m_{\nu,\delta}''}^\beta|^p 2^{\nu(N-d)}\bigg)^{q/p}\bigg)^{1/q} \nonumber \\
&= K_{a/2}K_{p\frac{a}{4}}^{1/p}\sup_{\beta'\in\mathbb{N}^d}\bigg(\sum_{\nu\geq0}\sup_{\beta''\in\mathbb{N}^{N-d}}2^{-q\frac{a}{4}|\beta''|}\bigg(\sum_{m'\in\mathbb{Z}^d}|\Lambda_{\nu,m',m_{\nu,\delta}''}^\beta|^p 2^{\nu(N-d)}\bigg)^{q/p}\bigg)^{1/q} \nonumber \\
&\leq K_{a/2}K_{p\frac{a}{4}}^{1/p}\sup_{\beta'\in\mathbb{N}^d}\bigg(\sum_{\nu\geq0}\sum_{\beta''\in\mathbb{N}^{N-d}}2^{-q\frac{a}{4}|\beta''|}\bigg(\sum_{m'\in\mathbb{Z}^d}|\Lambda_{\nu,m',m_{\nu,\delta}''}^\beta|^p 2^{\nu(N-d)}\bigg)^{q/p}\bigg)^{1/q} \nonumber \\
&= K_{a/2}K_{p\frac{a}{4}}^{1/p}\sup_{\beta'\in\mathbb{N}^d}\bigg(\sum_{\beta''\in\mathbb{N}^{N-d}}2^{-q\frac{a}{4}|\beta''|}\sum_{\nu\geq0}\bigg(\sum_{m'\in\mathbb{Z}^d}|\Lambda_{\nu,m',m_{\nu,\delta}''}^\beta|^p 2^{\nu(N-d)}\bigg)^{q/p}\bigg)^{1/q} \nonumber \\
&\leq K_{a/2}K_{p\frac{a}{4}}^{1/p}K_{q\frac{a}{4}}^{1/q}\sup_{\beta\in\mathbb{N}^N}\bigg(\sum_{\nu\geq0}\bigg(\sum_{m'\in\mathbb{Z}^d}|\Lambda_{\nu,m',m_{\nu,\delta}''}^\beta|^p 2^{\nu(N-d)}\bigg)^{q/p}\bigg)^{1/q}. \nonumber
\end{align}
Letting $K_{a,p,q}:=K_{a/2}K_{p\frac{a}{4}}^{1/p}K_{q\frac{a}{4}}^{1/q}$ and recalling $\lambda_{\nu,m}^\beta=2^{-\varrho_0|\beta|}\Lambda_{\nu,m}^\beta$ we get
\begin{align}
J_{p,q}^{\varrho',\delta}(\lambda,x'')&\leq K_{a,p,q}\sup_{\beta\in\mathbb{N}^N}2^{\varrho_0|\beta|}\bigg(\sum_{\nu\geq0}\bigg(\sum_{m'\in\mathbb{Z}^d}|\lambda_{\nu,m',m_{\nu,\delta}''}^\beta|^p 2^{\nu(N-d)}\bigg)^{q/p}\bigg)^{1/q}, \nonumber
\end{align}
which is the desired estimate. The proof when $p=\infty$ and/or $q=\infty$ is similar but technically simpler.
\end{proof}
\begin{remark}\label{RKdommm}
Of course, when $p=\infty$, the term "$2^{\nu(N-d)}$" disappears (recall Remark \ref{infiniQuark}) so that, in this case, Fact \ref{FACT1} follows directly from the above lemma.
\end{remark}
\begin{remark}\label{R:dommm}
The same kind of estimate holds in the setting of Besov spaces of generalized smoothness. That is, given a function $f\in B_{p,q}^s(\R^N)$ decomposed as above by \eqref{rep3000}
with \eqref{prodstruc} and \eqref{2norm}, we can estimate the $B_{p,q}^{(s,\Psi)}(\R^d)$-quasi-norm of its restrictions to almost every hyperplanes $f(\cdot,x'')$ exactly in the same fashion. It suffices to replace the $(s,p)$-$\beta$-quarks $(\beta\mathrm{qu})_{\nu,m}$ in the decomposition of $f(\cdot,x'')$ by $\Psi(2^{-\nu})^{-1}(\beta\mathrm{qu})_{\nu,m}$ in order to get $(s,p,\Psi)$-$\beta$-quarks. From here, we can reproduce the same reasoning as in Lemma \ref{dommm} with $\Psi(2^{-\nu})\lambda_{\nu,m}^\beta$ instead of $\lambda_{\nu,m}^\beta$ and we obtain
$$ \|f(\cdot,x'')\|_{B_{p,q}^{(s,\Psi)}(\R^d)}\lesssim\sum_{\delta\in\mathcal{D}}\tilde{J}_{p,q}^{\varrho_0,\delta}(\lambda,x''), $$
with
$$ \tilde{J}_{p,q}^{\varrho_0,\delta}(\lambda,x''):=\sup_{\beta\in\mathbb{N}^N}2^{\varrho_0|\beta|}\bigg(\sum_{\nu\geq0}\bigg(\Psi(2^{-\nu})^p\sum_{m'\in\mathbb{Z}^d}|\lambda_{\nu,m',\lfloor2^\nu x''\rfloor+\delta}^\beta|^p 2^{\nu(N-d)}\bigg)^{q/p}\bigg)^{1/q}. $$
Similarly, given a function $f\in B_{p,q}^{(s,\Psi)}(\R^N)$, we can estimate the $B_{p,q}^{(s,\Psi)}(\R^d)$-quasi-norm of its restrictions $f(\cdot,x'')$ in the same spirit. This is done up to a slight modification in the discussion above. It suffices to multiply the $(s,p)$-$\beta$-quarks considered above by a factor of $\Psi(2^{-\nu})^{-1}$ and to take $\eta_{\nu,m}^\beta$, the optimal subatomic decomposition of $f\in B_{p,q}^{(s,\Psi)}(\R^N)$ with respect to these new quarks. Then, the $B_{p,q}^{(s,\Psi)}(\R^N)$-quasi-norm of $f$ and the $B_{p,q}^{(s,\Psi)}(\R^d)$-quasi-norm of its restrictions $f(\cdot,x'')$ satisfy the same relations as when $\Psi\equiv1$ with $\eta_{\nu,m}^\beta$ instead of $\lambda_{\nu,m}^\beta$. That is, we still have
$$ \|f\|_{B_{p,q}^{(s,\Psi)}(\R^N)}\sim \sup_{\beta\in\mathbb{N}^N}2^{\varrho|\beta|}\bigg(\sum_{\nu\geq0}\bigg(\sum_{m\in\mathbb{Z}^N}|\eta_{\nu,m}^\beta|^p \bigg)^{q/p}\bigg)^{1/q}, $$
and
$$ \|f(\cdot,x'')\|_{B_{p,q}^{(s,\Psi)}(\R^d)}\lesssim \sum_{\delta\in\mathcal{D}} J_{p,q}^{\varrho_0,\delta}(\eta,x''), $$
where $\varrho,\varrho_0>0$ and $J_{p,q}^{\varrho_0,\delta}(\eta,x'')$ is as in \eqref{newJ}.
\end{remark}

\section{The case $q\leq p$} \label{SE:FACILE}

This section is concerned with Fact \ref{FACT1} (Fact \ref{FACT2} being only a consequence of Theorem \ref{TH:WEIGHT}).
We will use subatomic decompositions together with the estimate given at Lemma \ref{dommm} to get the following generalization of Fact \ref{FACT1}.
\begin{prop}\label{caracBesov2}
Let $N\geq2$, $1\leq d<N$, $0<q\leq p\leq\infty$ and $s>\sigma_p$. Let $\Psi$ be an admissible function. Let $K\subset\R^{N-d}$ be a compact set and let $f\in B_{p,q}^{(s,\Psi)}(\R^N)$. Then,
\begin{align*}
\left(\int_{K}\|f(\cdot,x'')\|_{B_{p,q}^{(s,\Psi)}(\R^d)}^q\mathrm{d}x''\right)^{1/q}\leq C\|f\|_{B_{p,q}^{(s,\Psi)}(\R^N)},
\end{align*}
for some constant $C=C(K,N,d,p,q)>0$ (modification if $q=\infty$).
\end{prop}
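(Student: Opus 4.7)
The plan is to pass from $f$ to an optimal subatomic decomposition and then exploit the condition $q\leq p$ via Jensen's inequality to carry out the integration over $K$. Concretely, by Theorem \ref{G:BspqQuark} applied to $(s,p)$-quarks multiplied by $\Psi(2^{-\nu})^{-1}$ (as in Remark \ref{R:dommm}), I would fix $\varrho>\varrho_0>r$ and pick an optimal decomposition $\eta_{\nu,m}^\beta$ of $f$ satisfying
$$\sup_{\beta\in\N^N}2^{\varrho|\beta|}\|\eta^\beta\|_{b_{p,q}}\sim\|f\|_{B_{p,q}^{(s,\Psi)}(\R^N)}.$$
Remark \ref{R:dommm} then bounds $\|f(\cdot,x'')\|_{B_{p,q}^{(s,\Psi)}(\R^d)}$ by $\sum_{\delta\in\mathcal{D}}J_{p,q}^{\varrho_0,\delta}(\eta,x'')$, so the matter reduces to establishing $\int_K J_{p,q}^{\varrho_0,\delta}(\eta,x'')^q\,\mathrm{d}x''\lesssim\|f\|_{B_{p,q}^{(s,\Psi)}(\R^N)}^q$ for each $\delta\in\mathcal{D}$.

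The first move I would make inside the integrand is the trivial but crucial replacement of $\sup_\beta$ by $\sum_\beta$, which gives
$$\int_K J_{p,q}^{\varrho_0,\delta}(\eta,x'')^q\,\mathrm{d}x''\leq\sum_{\beta\in\N^N}2^{\varrho_0 q|\beta|}\int_K A_{\beta,\delta}(x'')^q\,\mathrm{d}x'',$$
where $A_{\beta,\delta}(x'')^q=\sum_{\nu\geq0}\bigl(\sum_{m'\in\Z^d}2^{\nu(N-d)}|\eta_{\nu,m',\lfloor 2^\nu x''\rfloor+\delta}^\beta|^p\bigr)^{q/p}$. The spurious series in $\beta$ will be absorbed at the end by the geometric decay $\|\eta^\beta\|_{b_{p,q}}\lesssim 2^{-\varrho|\beta|}$. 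Next, for each fixed $\beta$ and $\nu$, I would use that every fibre of $x''\mapsto\lfloor 2^\nu x''\rfloor+\delta$ has Lebesgue measure $2^{-\nu(N-d)}$ to get
$$\int_K\sum_{m'\in\Z^d}2^{\nu(N-d)}|\eta_{\nu,m',\lfloor 2^\nu x''\rfloor+\delta}^\beta|^p\,\mathrm{d}x''\leq\sum_{m\in\Z^N}|\eta_{\nu,m}^\beta|^p,$$
and then apply Jensen's inequality with the concave function $t\mapsto t^{q/p}$, i.e. $\int_K g^{q/p}\,\mathrm{d}x''\leq|K|^{1-q/p}\bigl(\int_K g\,\mathrm{d}x''\bigr)^{q/p}$. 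This is precisely where the hypothesis $q\leq p$ intervenes. Summing in $\nu$ yields $\int_K A_{\beta,\delta}(x'')^q\,\mathrm{d}x''\leq|K|^{1-q/p}\|\eta^\beta\|_{b_{p,q}}^q$.

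Assembling everything,
$$\int_K J_{p,q}^{\varrho_0,\delta}(\eta,x'')^q\,\mathrm{d}x''\lesssim|K|^{1-q/p}\sum_{\beta\in\N^N}2^{\varrho_0 q|\beta|}\|\eta^\beta\|_{b_{p,q}}^q\lesssim|K|^{1-q/p}\|f\|_{B_{p,q}^{(s,\Psi)}(\R^N)}^q\sum_{\beta\in\N^N}2^{-(\varrho-\varrho_0)q|\beta|},$$
and the geometric series over $\N^N$ converges since $\varrho>\varrho_0$. Summing over the finitely many $\delta\in\mathcal{D}$ concludes the proof. The edge cases are routine: when $p=\infty$ the factor $2^{\nu(N-d)}$ disappears (Remark \ref{RKdommm}) and the Jensen step is replaced by the trivial bound $\int_K\sup_{m'}|\eta^\beta_{\nu,m',\lfloor 2^\nu x''\rfloor+\delta}|^q\,\mathrm{d}x''\leq|K|\sup_m|\eta^\beta_{\nu,m}|^q$; the case $q=\infty$ forces $p=\infty$ under $q\leq p$, and integrals over $x''$ are replaced by ess-sups, whereupon the estimate becomes essentially tautological. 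The one conceptual obstacle---rather than any technical difficulty---is making the sup-to-sum conversion in $\beta$ loss-free, which is what the freedom in Theorem \ref{G:BspqQuark} to take $\varrho$ strictly larger than $\varrho_0$ provides.
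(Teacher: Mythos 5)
Your proposal is correct and takes essentially the same route as the paper's proof: optimal subatomic decomposition, the estimate of Lemma \ref{dommm} (via Remark \ref{R:dommm}), the sup-to-sum conversion in $\beta$ absorbed by the $2^{-(\varrho-\varrho_0)|\beta|}$ decay, and the hypothesis $q\leq p$ enters through Jensen on $t\mapsto t^{q/p}$. The only cosmetic difference is in the integration step: the paper inserts the auxiliary bound $2^{\nu q(N-d)/p}|\Lambda^\beta_{\nu,\lfloor 2^\nu x''\rfloor+\delta}|^q\leq(\sum_{k\in\N^{N-d}}|\Lambda^\beta_{\nu,\cdots}|^p 2^{k_{d+1}+\cdots+k_N})^{q/p}$ and then applies Lemma \ref{ell1ekiv} iteratively, whereas you count fibres of $x''\mapsto\lfloor 2^\nu x''\rfloor+\delta$ directly and apply Jensen for each fixed $\nu$; the two computations are interchangeable.
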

\begin{proof}
Without loss of generality, we may consider the case $K=[1,2]^{N-d}$ only (the general case follows from standard scaling arguments). Also, we can suppose that $p<\infty$ since otherwise, when $p=\infty$, the desired result is a simple consequence of Lemma \ref{dommm} (recall Remark \ref{RKdommm}). Let us first prove Lemma \ref{caracBesov2} for $\Psi\equiv1$ (it will be clear at the end why this is enough to deduce the general case).

Let $f\in B_{p,q}^s(\R^N)$. Given the $(s,p)$-$\beta$-quarks $(\beta\mathrm{qu})_{\nu,m}$ and $\varrho>r$ defined at Section \ref{SE:ESTIMATE} we let $\lambda_{\nu,m}^\beta=\lambda_{\nu,m}^\beta(f)$ be the corresponding optimal subatomic decomposition. In particular
$$ f(x)=\sum_{\beta\in\N^N}\sum_{\nu=0}^\infty\sum_{m\in\Z^N}\lambda_{\nu,m}^\beta(\beta\mathrm{qu})_{\nu,m}(x), $$
with
$$ \|f\|_{B_{p,q}^s(\R^N)}\sim\sup_{\beta\in\N^N}2^{\varrho|\beta|}\bigg(\sum_{\nu\geq 0}\bigg(\sum_{m\in\Z^{N}}|\lambda_{\nu,m}^\beta|^p\bigg)^{q/p}\bigg)^{1/q}. $$
By the discussion in Section \ref{SE:ESTIMATE}, we have that
\begin{align}
\|f(\cdot,x'')\|_{B_{p,q}^s(\R^d)}\lesssim \sum_{\delta\in\mathcal{D}}J_{p,q}^{\varrho',\delta}(\lambda,x''), \label{majoration22}
\end{align}
for all $\varrho'\in(0,\varrho)$, where $J_{p,q}^{\varrho',\delta}(\lambda,x'')$ is given by \eqref{newJ}. Define
$$ \Lambda_{\nu,m''}^\beta:=\bigg(\sum_{m'\in\Z^d}|\lambda_{\nu,m',m''}^\beta|^p\bigg)^{1/p}. $$
In particular,
$$ \|f\|_{B_{p,q}^s(\R^N)}\sim\sup_{\beta\in\N^N}2^{\varrho|\beta|}\bigg(\sum_{\nu\geq 0}\bigg(\sum_{m''\in\Z^{N-d}}|\Lambda_{\nu,m''}^\beta|^p\bigg)^{q/p}\bigg)^{1/q}. $$
Then, the conclusion of Lemma \ref{dommm} rewrites
$$ J_{p,q}^{\varrho',\delta}(\lambda,x'')^q\lesssim \sup_{\beta\in\N^N}2^{\varrho_0 q|\beta|}\sum_{\nu\geq0}2^{\nu q\frac{N-d}{p}}|\Lambda_{\nu,\lfloor2^\nu x''\rfloor+\delta}^\beta|^q \qquad{\mbox{ for all }}~~~~\delta\in\mathcal{D}, $$
and some $\varrho_0\in(\varrho',\varrho)$.
Integration over $[1,2]^{N-d}$ yields
\begin{align}
I_\delta:=\int_{[1,2]^{N-d}}&J_{p,q}^{\varrho',\delta}(\lambda,x'')^q\mathrm{d}x''\lesssim \int_{[1,2]^{N-d}}\sup_{\beta\in\N^N}2^{\varrho_0 q|\beta|}\sum_{\nu\geq0}2^{\nu q\frac{N-d}{p}}|\Lambda_{\nu,\lfloor2^\nu x''\rfloor+\delta}^\beta|^q\mathrm{d}x'' \nonumber \\
&\leq \sum_{\beta\in\N^N}2^{\varrho_0 q|\beta|}\sum_{\nu\geq0}\int_{[1,2]^{N-d}}2^{\nu q\frac{N-d}{p}}|\Lambda_{\nu,\lfloor2^\nu x''\rfloor+\delta}^\beta|^q\mathrm{d}x''. \nonumber
\end{align}
Now, we observe that
\begin{align}
2^{\nu q\frac{N-d}{p}}|\Lambda_{\nu,\lfloor2^\nu x''\rfloor+\delta}^\beta|^q&\leq \bigg(\sum_{k\in\N^{N-d}}|\Lambda_{\nu,\lfloor2^{k_{d+1}} x_{d+1}\rfloor+\delta_{d+1},\cdots,\lfloor2^{k_{N}} x_{N}\rfloor+\delta_{N}}^\beta|^p2^{k_{d+1}+\cdots+k_N}\bigg)^{q/p}. \nonumber
\end{align}
Hence, using the fact that $q\leq p$ and applying $N-d$ times Lemma \ref{ell1ekiv}, we get
\begin{align}
I_\delta&\lesssim \sum_{\beta\in\N^N}2^{\varrho_0 q|\beta|}\sum_{\nu\geq0}\bigg(\sum_{k\in\N^{N-d}}|\Lambda_{\nu,k+\delta}^\beta|^p\bigg)^{q/p} \nonumber \\
&\leq \sum_{\beta\in\N^N}2^{(\varrho_0-\varrho) q|\beta|}\sup_{\beta\in\N^N}2^{\varrho q|\beta|}\sum_{\nu\geq0}\bigg(\sum_{k\in\N^{N-d}}|\Lambda_{\nu,k+\delta}^\beta|^p\bigg)^{q/p} \nonumber \\
&= K_{\varrho,N,q}\sup_{\beta\in\N^N}2^{\varrho q|\beta|}\sum_{\nu\geq0}\bigg(\sum_{k\in\N^{N-d}}|\Lambda_{\nu,k+\delta}^\beta|^p\bigg)^{q/p} \nonumber \\
&\leq K_{\varrho,N,q}\sup_{\beta\in\N^N}2^{\varrho q|\beta|}\|\lambda^\beta\|_{b_{p,q}}^q. \nonumber
\end{align}
Thus, recalling \eqref{majoration22}, we arrive at
\begin{align*}
\left(\int_{[1,2]^{N-d}}\|f(\cdot,x'')\|_{B_{p,q}^s(\R^d)}^q\mathrm{d}x''\right)^{1/q}\lesssim \|f\|_{B_{p,q}^s(\R^N)}.
\end{align*}
Now, having in mind Remark \ref{R:dommm}, we can reproduce exactly the same proof when $\Psi\not\equiv 1$ with almost no modifications. This completes the proof.
\end{proof}

\section{The case $p<q$}\label{SE:DIFFICILE}

In this section we prove, at a stroke, Theorem \ref{THEOREM} and Theorem \ref{TH:BMO}. As will become clear, the proof of Theorem \ref{TH:BMO} will easily follow from that of Theorem \ref{THEOREM}.

Let us begin with the following more general result:
\begin{theo}\label{THEOREM2}
Let $N\geq2$, $1\leq d<N$, $0< p<q\leq\infty$ and $s>\sigma_p$. Let $\Psi$ be an admissible function. Then, there exists a function $f\in B_{p,q}^{(s,\Psi)}(\R^N)$ such that
\begin{align}
f(\cdot,x'')\notin B_{p,\infty}^{(s,\Psi)}(\R^d) \qquad{\mbox{ for a.e. }}~~x''\in \R^{N-d}. \nonumber
\end{align}
\end{theo}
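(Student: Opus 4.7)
The plan is to build $f$ explicitly as a subatomic series of $\beta=0$ quarks, using the bad sequence of Corollary \ref{COR:CTR2} to force the restriction $f(\cdot,x'')$ to be a sum of nested bumps at the origin in $\R^d$ whose $(s,p,\Psi)$-quark coefficients blow up. Concretely, I would fix a product-type generator $\psi(y)=\psi_0(y_1)\cdots\psi_0(y_N)$ with $\psi_0\in C_c^\infty(\R)$ as in Definition \ref{G:quarks}, and apply Corollary \ref{COR:CTR2} to obtain a nonnegative sequence $(\tilde\lambda_{\nu,k})_{\nu,k\geq 0}$ with
\[
\bigg(\sum_{\nu\geq 0}\bigg(\sum_{k\geq 0}\tilde\lambda_{\nu,k}^p\bigg)^{q/p}\bigg)^{1/q}<\infty\quad\text{and}\quad\sup_{\nu\geq 0}\,2^{\nu/p}\tilde\lambda_{\nu,\lfloor 2^\nu x\rfloor}=\infty\text{ for all }x\in[1,2)
\]
(modification if $q=\infty$). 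In the case $N-d=1$ I would then set
\[
f(x):=\sum_{\nu\geq 0}\sum_{k\geq 0}\tilde\lambda_{\nu,k}\,(\beta\mathrm{qu})_{\nu,(0,\ldots,0,k)}(x)
\]
with $\beta=0$. Since only $\beta=0$ appears and the induced $b_{p,q,\varrho}$-norm equals the $b_{p,q}$-norm of $\tilde\lambda$, Theorem \ref{G:BspqQuark} places $f$ in $B_{p,q}^{(s,\Psi)}(\R^N)$. For $N-d>1$, I would spread the mass over the remaining transverse indices by setting $\lambda_{\nu,(0,m'')}^0:=2^{-\nu(N-d-1)/p}\tilde\lambda_{\nu,m_N}\mathds{1}_{m_{d+1},\ldots,m_{N-1}\in\{0,\ldots,2^\nu-1\}}$, which preserves the $b_{p,q}$-norm, and then translate by elements of $\Z^{N-d}$ so as to reach a.e. $x''\in\R^{N-d}$.

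Exploiting the product structure of $\psi$ together with the computations of Section \ref{SE:ESTIMATE}, the restriction rewrites as the $d$-dimensional subatomic series
\[
f(x',x'')=\sum_{\nu\geq 0}b_\nu(x'')\,(\beta\mathrm{qu})_{\nu,0}(x')\quad\text{with}\quad b_\nu(x'')=2^{\nu/p}\sum_{k\geq 0}\tilde\lambda_{\nu,k}\,\psi_0(2^\nu x_N-k)
\]
in the case $N-d=1$ (with the analogous expression and dimensional factor $2^{\nu(N-d)/p}$ in general). For $x_N\in[1,2)$ the inner sum reduces to $O(1)$ nonzero terms around $k=\lfloor 2^\nu x_N\rfloor$, and for a.e. $x_N$ the set of $\nu$ along which $\psi_0(\{2^\nu x_N\})\geq c>0$ has positive lower density --- this follows from the continuity of $\psi_0$ at $0$ together with $\psi_0(0)>0$ and the Weyl equidistribution of the fractional parts $\{2^\nu x_N\}$ (the base-$2$ shift is ergodic on $[0,1)$). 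On such a subsequence one has $|b_\nu(x_N)|\gtrsim 2^{\nu/p}\tilde\lambda_{\nu,\lfloor 2^\nu x_N\rfloor}$, so Corollary \ref{COR:CTR2} gives $\sup_\nu|b_\nu(x_N)|=\infty$ for a.e. $x_N\in[1,2)$.

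The hard part is then to upgrade this blow-up of the $d$-dimensional subatomic \emph{coefficients} $b_\nu(x'')$ into an actual blow-up of the Besov quasi-norm $\|f(\cdot,x'')\|_{B_{p,\infty}^{(s,\Psi)}(\R^d)}$, since Theorem \ref{G:BspqQuark} delivers only the $\lesssim$-inequality through an infimum over decompositions. I would handle this via the equivalent Fourier-analytic characterization $\|g\|_{B_{p,\infty}^{(s,\Psi)}(\R^d)}\sim\sup_\nu 2^{\nu s}\Psi(2^{-\nu})\|\Delta_\nu g\|_{L^p}$, with $\Delta_\nu$ a Littlewood--Paley block at scale $2^\nu$, applied to $g=f(\cdot,x'')=\sum_\nu b_\nu(x'')\cdot 2^{-\nu(s-d/p)}\Psi(2^{-\nu})^{-1}\psi_0(2^\nu x_1)\cdots\psi_0(2^\nu x_d)$. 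The $\nu=\nu_0$ summand contributes an $L^p$-mass of order $|b_{\nu_0}|\,2^{-\nu_0 s}\Psi(2^{-\nu_0})^{-1}$ to $\|\Delta_{\nu_0}g\|_{L^p}$, while scales $\nu>\nu_0$ are cut by $\widehat{\psi_0}$ in its deep tail (geometric decay in $\nu-\nu_0$) and scales $\nu<\nu_0$ are suppressed by the moment cancellation of $\Delta_\nu$. This yields the matching lower bound $\|\Delta_{\nu_0}g\|_{L^p}\gtrsim |b_{\nu_0}|\,2^{-\nu_0 s}\Psi(2^{-\nu_0})^{-1}$, hence $\|g\|_{B_{p,\infty}^{(s,\Psi)}(\R^d)}\gtrsim\sup_\nu|b_\nu|=\infty$, closing the argument. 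Controlling the scale interference in this lower bound is the main point requiring care; a purely real-variable alternative (somewhat cleaner for $0<p<1$) is to work directly from Definition \ref{G:BESOV} by choosing $h=2^{-\nu_0}e_1$ and isolating the $\nu=\nu_0$ contribution to $\|\Delta_h^M g\|_{L^p}$, noting that smaller-scale atoms vanish identically on the annulus $|x'|\sim 2^{-\nu_0}$ for support reasons, while larger-scale atoms contribute only through a Taylor remainder of size $(2^\nu h)^M$ with $M>s$.
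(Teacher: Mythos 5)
Your construction places all the $\beta=0$ quarks at the origin in the first $d$ coordinates, so the bumps at different scales $\nu$ are nested on top of one another. The paper avoids this entirely by translating the quark at scale $j$ to the point $C_Mj$ in each of the first $N-1$ coordinates, with $C_M=2(M+2)$; this makes the supports of the different-scale atoms (and of their $M$-th finite differences with step $|h|\leq 2^{-j}$) pairwise disjoint, so that the lower bound
\begin{align*}
\sup_{h\in K_j}\|\Delta_h^Mf(\cdot,x_N)\|_{L^p(\R^{N-1})}\gtrsim 2^{-js}\Psi(2^{-j})^{-1}\Lambda_j(x_N)
\end{align*}
drops out immediately from \cite[Lemma 8.2]{JB}, with no cross-scale interference to control. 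You acknowledge that ``controlling the scale interference in this lower bound is the main point requiring care,'' and indeed this is exactly where your proposal stops short: neither the Littlewood--Paley route nor the ``real-variable alternative'' is carried out, and the difficulty you would face (potential cancellation among overlapping scales, the dependence on $p$ in the regime $p<1$, matching upper and lower tails of $\widehat{\psi_0}$) is genuine. Rediscovering the offset trick eliminates this problem at the source and is the main idea you are missing.

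There is a second, more concrete gap in your argument. To convert blow-up of the coefficients $b_\nu(x_N)$ into blow-up of $\sup_\nu 2^{\nu/p}\tilde\lambda_{\nu,\lfloor 2^\nu x_N\rfloor}$, you invoke the continuity of $\psi_0$ at $0$, $\psi_0(0)>0$, and Weyl equidistribution of $\{2^\nu x_N\}$ under the doubling map, concluding that $\psi_0(\{2^\nu x_N\})\geq c$ on a set of $\nu$'s of positive lower density. But the ``bad'' set $J_x$ of indices $\nu$ along which Corollary \ref{COR:CTR2} produces blow-up is, by construction, a \emph{density-zero} set (the proportion of nonzero terms on $T_j$ is comparable to $1/j$). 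A positive-density set and a density-zero set can have finite intersection, so even for a.e.\ $x_N$ you have no guarantee that the good-$\psi_0$ indices and the blow-up indices overlap infinitely often. The paper sidesteps this by imposing the stronger condition $\inf_{z\in[0,1]^N}\psi(z)>0$ (see \eqref{CDNpsi}), which gives $\psi_0(\{2^\nu x_N\})\geq c_0$ for \emph{every} $\nu$ and $x_N\in[1,2]$, so the lower bound \eqref{loweta} holds along $J_x$ with no ergodic input. You should build this stronger pointwise positivity into your choice of generator rather than appeal to equidistribution.

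Finally, two smaller remarks. For the reduction to the codimension-one case, the paper simply tensors the one-dimensional construction with a fixed Schwartz function in the remaining transverse variables, which is cleaner than the ``spread the mass'' indicator you propose and gives the $B_{p,q}^{(s,\Psi)}(\R^N)$ membership at once via \cite[Formula (16), p.112]{Triebel}. And to pass from ``a.e.\ $x_N\in[1,2]$'' to ``a.e.\ $x_N\in\R$,'' the paper forms $g=\sum_{l\in\Z}2^{-|l|}f(\cdot,\cdot+l)$ and uses the $\eta$-triangle inequality together with support considerations to show the translates $f_l(\cdot,x_N)$ with $l\neq m$ remain bounded on $[1-m,2-m]$; make sure to include this translation step, which your sketch alludes to only in passing.
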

\begin{proof}
We will essentially follow two steps. \\

\noindent \emph{Step 1: case $d=N-1$.} We will construct a function satisfying the requirements of Theorem \ref{THEOREM2} (and hence of Theorem \ref{THEOREM}) via its subatomic coefficients.

Let $\Psi$ be an admissible function. Let $0<p<q\leq\infty$, $s>\sigma_p$, $M=\lfloor s\rfloor+1$ and $(\lambda_{j,k})_{j,k\geq0}\in b_{p,q}$ be the sequence constructed at Corollary \ref{COR:CTR2}.

Also, we let $\psi\in C_c^\infty(\R^N)$ be a function such that
\begin{align}
\mathrm{supp}(\psi)\subset[-2,2]^N,\,\,~\inf_{z\in [0,1]^N}\psi(z)>0~~~~\,\,\text{ and }\,\,~~~~\sum_{m\in\Z^N}\psi(\cdot-m)\equiv1. \label{CDNpsi}
\end{align}
In addition, we will suppose that $\psi$ has the product structure
\begin{align}
\psi(x)=\psi(x_1)\,...\,\psi(x_N). \label{prodstructure}
\end{align}
Notice that such a $\psi$ always exists.\footnotemark\footnotetext{Here is an example. Let $u(t):=e^{-1/t^2}\mathds{1}_{(0,\infty)}(t)$ (extended by $0$ in $(-\infty,0]$) and let $v(t)=u(1+t)u(1-t)$. Then,
$$ \psi(x):=\prod_{j=1}^N\frac{1}{2}\,\psi_0\left(\frac{x_j}{2}\right)~~~~\,\,\text{ where }\,\,~~~~\psi_0(t)=\frac{v(t)}{v(t-1)+v(t)+v(t+1)}, $$
is a smooth function satisfying \eqref{CDNpsi} and \eqref{prodstructure}.}
Then, we define
\begin{align}
f(x)=\sum_{j\geq0}\sum_{k\geq0}\lambda_{j,k}2^{-j(s-\frac{N}{p})}\Psi(2^{-j})^{-1}\psi(2^j(x_1\!-\!C_Mj))...\psi(2^j(x_{N-1}\!-\!C_Mj))\psi(2^jx_N\!-\!k), \label{DE:f}
\end{align}
where $C_M=2(M+2)$. It follows from Definition \ref{G:quarks} that
$$ \Psi(2^{-j})^{-1}2^{-j(s-\frac{N}{p})}\psi(2^jx-m) \qquad{\mbox{ for }}~~x\in\R^N, $$
with
$$m=(C_M2^jj,\,...\,,\,C_M2^jj,\,k)\in\Z^N, $$
can be interpreted as $(s,p,\Psi)$-$0$-quarks relative to the cube $Q_{j,m}$. Consequently, by Theorem \ref{G:BspqQuark} and Corollary \ref{COR:CTR2}, we have
$$ \|f\|_{B_{p,q}^{(s,\Psi)}(\R^N)}\leq c\hspace{0.1em}\bigg(\sum_{j\geq0}\bigg(\sum_{k\geq0}\lambda_{j,k}^p\bigg)^{q/p}\bigg)^{1/q}<\infty, $$
(modification if $q=\infty$). Therefore, $f\in B_{p,q}^{(s,\Psi)}(\R^N)$. In particular, the sum in the right-hand side of \eqref{DE:f} converges in $L^p(\R^N)$ and is unconditionally convergent for a.e. $x\in\R^N$ (notice the terms involved are all nonnegative) and, by Fubini, $f(\cdot,x_N)$ also converges in $L^p(\R^{N-1})$ for a.e. $x_N\in\R$. Thus, letting
$$ \Lambda_j(x_N):=\sum_{k\geq0}\lambda_{j,k}2^{j/p}\psi(2^jx_N-k), $$
we may rewrite \eqref{DE:f} as
$$ f(x',x_N)=\sum_{j\geq0}\Lambda_j(x_N)\hspace{0.1em}2^{-j(s-\frac{N-1}{p})}\Psi(2^{-j})^{-1}\psi(2^j(x_1-C_Mj))\,...\,\psi(2^j(x_{N-1}-C_Mj)). $$
Notice that assumption \eqref{CDNpsi} implies that there is a $c_0>0$ such that
$$ \psi(2^jx_N-\lfloor2^jx_N\rfloor)\geq c_0>0 \qquad{\mbox{ for all }}~~x_N\in[1,2]~~\text{ and }~~j\geq0. $$
In particular, we have
\begin{align}
\Lambda_j(x_N)\geq c_0\,\lambda_{j,\lfloor 2^jx_N\rfloor}2^{j/p}. \label{loweta}
\end{align}
Now, for all $j\geq0$, we write
\begin{align}
K_j:=\big\{h\in\R^{N-1}:2^{-(j+1)}\leq|h|\leq 2^{-j}\big\}. \label{DE:Kj}
\end{align}
By \cite[Lemma 8.2]{JB} (in fact in \cite{JB} it is implicitly supposed that $1\leq p<\infty$ but the proof still works when $0<p<1$) and \eqref{loweta}, we have
\begin{align}
\sup_{h\in K_j}\|\Delta_h^Mf(\cdot,x_N)\|_{L^p(\R^{N-1})}&\geq c\,2^{-js}\Psi(2^{-j})^{-1}\Lambda_j(x_N) \nonumber \\
&\geq c'\,2^{-js}\Psi(2^{-j})^{-1}\,2^{j/p}\lambda_{j,\lfloor2^jx_N\rfloor}, \label{below}
\end{align}
for any $j\geq0$ and some $c'>0$ independent of $j$. Recall that
$$ \|g\|_{B_{p,\infty}^{(s,\Psi)}(\R^{N-1})}\sim\|g\|_{L^p(\R^{N-1})}+\sup_{j\geq1}\,2^{js}\Psi(2^{-j})\sup_{h\in K_j}\|\Delta_h^Mg\|_{L^p(\R^{N-1})}, $$
is an equivalent quasi-norm on $B_{p,\infty}^{(s,\Psi)}(\R^{N-1})$ (this is a discretized version of Definition \ref{G:BESOV}). This together with \eqref{below} and Corollary \ref{COR:CTR2} gives
$$ \|f(\cdot,x_N)\|_{B_{p,\infty}^{(s,\Psi)}(\R^{N-1})}\gtrsim\,\sup_{j\geq0}\,2^{j/p}\lambda_{j,\lfloor2^jx_N\rfloor}=\infty \qquad{\mbox{ for a.e. }}~~x_N\in[1,2]. $$
Therefore, $f(\cdot,x_N)\notin B_{p,\infty}^{(s,\Psi)}(\R^{N-1})$ for a.e. $x_N\in[1,2]$.

We will show that one can construct a function satisfying the requirements of Theorem \ref{THEOREM2} by considering a weighted sum of translates of the function $f$ constructed above. To this end, we let
$$f_l(x',x_N):=f(x',x_N+l) \qquad{\mbox{ for }}~~l\in\Z,$$
and we define
$$ g:=\sum_{l\in\Z}2^{-|l|}f_l. $$
Then, by the triangle inequality for Besov quasi-norms, we have
$$ \|g\|_{B_{p,q}^{(s,\Psi)}(\R^N)}^\eta\leq\sum_{l\in\Z}2^{-\eta|l|}\|f_l\|_{B_{p,q}^{(s,\Psi)}(\R^N)}^\eta\leq c_\eta\|f\|_{B_{p,q}^{(s,\Psi)}(\R^N)}^\eta<\infty, $$
for some $0<\eta\leq1$. Hence, $g\in B_{p,q}^{(s,\Psi)}(\R^N)$. To complete the proof we need to show that
\begin{align}
g(\cdot,x_N)\notin B_{p,\infty}^{(s,\Psi)}(\R^{N-1}) \qquad{\mbox{ for a.e. }}~~x_N\in\R. \label{CONCLUSION}
\end{align}
Let $m\in\Z$. Then, by the triangle inequality for Besov quasi-norms we have
\begin{align}
2^{-\eta |m|}\|f_m(\cdot,x_N)\|_{B_{p,\infty}^{(s,\Psi)}(\R^{N-1})}^\eta&\leq \|g(\cdot,x_N)\|_{B_{p,\infty}^{(s,\Psi)}(\R^{N-1})}^\eta+\sum_{l\neq m}2^{-\eta|l|}\|f_l(\cdot,x_N)\|_{B_{p,\infty}^{(s,\Psi)}(\R^{N-1})}^\eta \nonumber \\
&\leq \|g(\cdot,x_N)\|_{B_{p,\infty}^{(s,\Psi)}(\R^{N-1})}^\eta+c_\eta\,\sup_{l\neq m}\,\|f_l(\cdot,x_N)\|_{B_{p,\infty}^{(s,\Psi)}(\R^{N-1})}^\eta. \label{triangleF}
\end{align}
Clearly, the left-hand side of \eqref{triangleF} is infinite for a.e. $x_N\in[1-m,2-m]$. Thus, to prove \eqref{CONCLUSION}, one only needs to make sure that the last term on the right-hand side of \eqref{triangleF} is finite for a.e. $x_N\in[1-m,2-m]$. For it, we notice that, by construction, it is necessary to have
\begin{align}
j\geq1~~\,\text{ and }\,~~2^j\leq k<2^{1+j}, \label{CD:la}
\end{align}
for $\lambda_{j,k}\neq0$ to hold. In particular, $\Lambda_0\equiv0$ and $\Lambda_j(x_N)$ consists only in finitely many terms for a.e. $x_N\in\R$.
In addition, by our assumptions on the support of $\psi$, we have $\psi(2^jx_N-k)\neq0$ provided
\begin{align}
\left|x_N-\frac{k}{2^j}\right|\leq2^{1-j}. \label{CDN:supp}
\end{align}
By \eqref{CD:la} and \eqref{CDN:supp}, we deduce that if $x_N\in\R\setminus[1,2]$, then there are only finitely many values of $j\geq1$ such that $\Lambda_j(x_N)\not\equiv0$. In particular,
\begin{align}
f(\cdot,x_N+l)\in B_{p,\infty}^{(s,\Psi)}(\R^{N-1}) \qquad{\mbox{ for a.e. }}~~x_N\in[1,2]~~\text{ and all }~~l\in\Z\setminus\{0\}. \label{extP1}
\end{align}
Moreover, a consequence of \eqref{CD:la} and \eqref{CDN:supp} is that
$$ j\geq1~~\text{ and }~~x_N\in\mathrm{supp}(\Lambda_j)~~\,\Longrightarrow\,~~1-2^{1-j}\leq x_N<2+2^{1-j}. $$
In turn, this implies that the support of $\Lambda_j$ is included in $[0,3]$. Therefore,
\begin{align}
f(\cdot,x_N+l)\equiv0 \qquad{\mbox{ for a.e. }}~~x_N\in[1,2]~~\text{ and all }~~l\in\Z~~\text{ with }~~|l|\geq2. \label{extP2}
\end{align}
Hence, by \eqref{extP1} and \eqref{extP2}, we infer that
$$ \max_{l\neq0}\,\|f_l(\cdot,x_N)\|_{B_{p,\infty}^{(s,\Psi)}(\R^{N-1})}<\infty \qquad{\mbox{ for a.e. }}~~x_N\in[1,2]. $$
In like manner, for every $m\in\Z$, we have
$$ \max_{l\neq m}\,\|f_l(\cdot,x_N)\|_{B_{p,\infty}^{(s,\Psi)}(\R^{N-1})}<\infty \qquad{\mbox{ for a.e. }}~~x_N\in[1-m,2-m]. $$
This proves the theorem for $d=N-1$. \\

\noindent \emph{Step 2: case $1\leq d<N-1$.} By the above, we know that Theorem \ref{THEOREM} holds for any $N\geq 2$ and $d=N-1$. In particular, there exists a function $f\in B_{p,q}^{(s,\Psi)}(\R^{d+1})$ such that $f(\cdot,x_{d+1})\notin B_{p,\infty}^{(s,\Psi)}(\R^{d})$ for a.e. $x_{d+1}\in\R$. Now, pick a function $w\in\mathcal{S}(\R^{N-d-1})$ with $w>0$ on $\R^{N-d-1}$ and set
$$ g(x)=g(x_1,...,x_N)=f(x_1,...,x_d,x_{d+1})w(x_{d+2},...,x_N). $$
It is standard that $g\in L^{\overline{p}}(\R^N)$ where $\overline{p}:=\max\{1,p\}$. Then, letting $M=\lfloor s\rfloor+1$ and using \cite[Formula (16), p.112]{Triebel}, we have that
$$ \sup_{|h|\leq t}\,\|\Delta_h^{2M}g\|_{L^p(\R^N)}\lesssim\|f\|_{L^p(\R^{d+1})}\sup_{|h''|\leq t}\|\Delta_{h''}^Mw\|_{L^p(\R^{N-d-1})}+\|w\|_{L^p(\R^{N-d+1})}\sup_{|h'|\leq t}\|\Delta_{h'}^Mf\|_{L^p(\R^{d+1})}, $$
for any $h=(h',h'')\in\R^N\setminus\{0\}$ with $h'=(h_1,...,h_{d+1})$ and $h''=(h_{d+2},...,h_N)$. In particular, recalling Remark \ref{G:M}, we see that this implies
$$ \|g\|_{B_{p,q}^{(s,\Psi)}(\R^N)}\lesssim \|f\|_{L^p(\R^{d+1})}\|w\|_{B_{p,q}^{(s,\Psi)}(\R^{N-d-1})}+\|w\|_{L^p(\R^{N-d-1})}\|f\|_{B_{p,q}^{(s,\Psi)}(\R^{d+1})}. $$
Hence, $g\in B_{p,q}^{(s,\Psi)}(\R^N)$. Moreover, it is easily seen that
$$g(\cdot,x_{d+1},...,x_N)=f(\cdot,x_{d+1})w(x_{d+2},...,x_N)\notin B_{p,\infty}^{(s,\Psi)}(\R^d), $$
for a.e. $(x_{d+1},...,x_N)\in\R^{N-d}$. This completes the proof.
\end{proof}
%The idea is quite standard and relies on \cite[Theorem 9, Chapter 11, p.244]{Peetre} which states that:
%\begin{enumerate}
%\item[(i)] If $f\in B_{p,q}^s(\R^N)$, then $\partial^\alpha f\in B_{p,q}^{s-|\alpha|}(\R^N)$.
%\item[(ii)] If $\partial^\alpha f\in B_{p,q}^{s-k}(\R^N)$ for all $|\alpha|\leq k$, then $f\in B_{p,q}^s(\R^N)$.
%\end{enumerate}
%Let $\max\{0,\sigma_p-1\}<s'\leq\sigma_p$. Then, $s'+1>\sigma_p$ so, by assumption, there exists a function $f\in B_{p,q}^{s'+1}(\R^N)$ %such that
%\begin{align}
%f(\cdot,x')\notin B_{p,\infty}^{s'+1}(\R) \qquad{\mbox{ for a.e. }}~~x'\in\R^{N-1}. \label{conc:aux}
%\end{align}
%Now, recalling Fact \ref{FACT2}, we know that $f(\cdot,x')\in B_{p,\infty}^{s'}(\R)$ for a.e. $x'\in\R^{N-1}$. Therefore, it follows from %(ii) that
%\begin{align}
%\partial_{x_1}f(\cdot,x')\notin B_{p,\infty}^{s'}(\R) \qquad{\mbox{ for a.e. }}~~x'\in\R^{N-1}, \label{kJk}
%\end{align}
%since otherwise we would contradict \eqref{conc:aux}.
%Thus, letting $\alpha=(1,0,...,0)\in\N^N$ and $g=\partial^\alpha f$ we have, by (i), that $g\in B_{p,q}^{s'}(\R^N)$ and, by \eqref{kJk}, %$g(\cdot,x')\notin B_{p,\infty}^{s'}(\R)$ for a.e. $x'\in\R^{N-1}$. Thus, the conclusion of Theorem \ref{THEOREM} holds now for any %$s>\max\{0,\sigma_p-1\}$. The result follows for any $s>0$ by induction.
The function we have constructed above (in the proof of Theorem \ref{THEOREM2}) turns out to verify the conclusion of Theorem \ref{TH:BMO}.
\begin{proof}[Proof of Theorem \ref{TH:BMO}]
For simplicity, we outline the proof for $N=2$ and $d=1$ only (the general case follows from the same arguments as above). Let $f$ be the function constructed in the proof of Theorem \ref{THEOREM2} with $\Psi\equiv1$, namely
$$ f(x_1,x_2):=\sum_{j\geq0}\Lambda_j(x_2)\hspace{0.1em}2^{-j(s-\frac{1}{p})}\psi(2^j(x_1-C_Mj)), $$
with
$$ \Lambda_j(x_2):=\sum_{k\geq0}\lambda_{j,k}2^{j/p}\psi(2^jx_2-k), $$
where $\psi$, $C_M$ and $(\lambda_{j,k})_{j,k\geq0}$ are as in the proof of Theorem \ref{THEOREM2}. Clearly,
$$\|f\|_{B_{p,q}^s(\R^2)}\leq c\hspace{0.1em}\bigg(\sum_{j\geq0}\bigg(\sum_{k\geq0}\lambda_{j,k}^p\bigg)^{q/p}\bigg)^{1/q}<\infty.$$
Hence, $f\in B_{p,q}^s(\R^2)$. We now distinguish the cases $sp>1$, $sp=1$ and $sp<1$. \\

\noindent \emph{Step 1: case} $sp>1$. This case works as in Theorem \ref{THEOREM2}. Indeed, by the supports of the functions involved, we have for a.e. $x_2\in[1,2]$,
$$ \|f(\cdot,x_2)\|_{C^{s-\frac{1}{p}}(\R)}\sim\,\sup_{j\geq0}\,2^{j(s-\frac{1}{p})}\sup_{h\in K_j}\|\Delta_h^Mf(\cdot,x_2)\|_{L^\infty(\R)}\gtrsim \,\sup_{j\geq0}\,2^{j/p}\lambda_{j,\lfloor2^jx_2\rfloor}=\infty, $$
where $K_j$ is given by \eqref{DE:Kj}. We may now conclude as in the proof of Theorem \ref{THEOREM2}.  \\

\noindent \emph{Step 2: case} $sp=1$. It suffices to notice that, for any $k\geq0$, we have
\begin{align}
\|f(\cdot,x_2)\|&_{\mathrm{BMO}(\R)}\geq \fint_{C_Mk-2^{-k}}^{C_Mk+2^{-k}}\left|\fint_{C_Mk-2^{-k}}^{C_Mk+2^{-k}}\big[f(x,x_2)-f(z,x_2)\big]\mathrm{d}z\right|\mathrm{d}x \nonumber \\
&=\fint_{C_Mk-2^{-k}}^{C_Mk+2^{-k}}\left|\sum_{j\geq0}\Lambda_j(x_2)\fint_{C_Mk-2^{-k}}^{C_Mk+2^{-k}}\big[\psi(2^j(x-C_Mj))-\psi(2^j(z-C_Mj))\big]\mathrm{d}z\right|\mathrm{d}x. \nonumber
\end{align}
Hence, by the support of the functions involved we deduce that
\begin{align}
\|f(\cdot,x_2)\|_{\mathrm{BMO}(\R)}&\geq \Lambda_k(x_2)\fint_{C_Mk-2^{-k}}^{C_Mk+2^{-k}}\left|\fint_{C_Mk-2^{-k}}^{C_Mk+2^{-k}}\big[\psi(2^k(x-C_Mk))-\psi(2^k(z-C_Mk))\big]\mathrm{d}z\right|\mathrm{d}x \nonumber \\
&=\Lambda_k(x_2)\fint_{-1}^{1}\left|\fint_{-1}^{1}\big[\psi(x)-\psi(z)\big]\mathrm{d}z\right|\mathrm{d}x \geq c'\Lambda_k(x_2). \nonumber
\end{align}
Therefore, we have
$$ \|f(\cdot,x_2)\|_{\mathrm{BMO}(\R)}\gtrsim \,\sup_{j\geq0}\,\Lambda_j(x_2)=\sup_{j\geq0}\,2^{j/p}\lambda_{j,\lfloor2^jx_2\rfloor}=\infty \quad{\mbox{ for a.e. }}x_2\in[1,2]. $$
Thus, we may again conclude as in the proof of Theorem \ref{THEOREM2}. \\

\noindent \emph{Step 3: case} $sp<1$. Define $r:=\frac{p}{1-sp}$ and rewrite $f$ as
$$ f(x_1,x_2):=\sum_{j\geq0}c_j(x_2)\,2^{j/r}f_j(x_1), $$
where we have set $f_j(x_1):=\psi(2^j(x_1-C_Mj))$ and
$$ c_j(x_2):=2^{-j(s-\frac{2}{p})}\,2^{-j/r}\sum_{k\geq0}^{~}\lambda_{j,k}\psi(2^jx_2-k). $$
Since the $f_j$'s have mutually disjoint support we find that
$$ f(\cdot,x_2)^*(t)\geq c_j(x_2)\,2^{j/r}f_j^*(t) \quad{\mbox{ for any }}t\geq0\text{ and }j\geq0. $$
Moreover, it is easy to see that $f_j^*(t)=\psi^*(2^jt)$.
In turn, this implies that
\begin{align}
\|f(\cdot,x_2)\|_{L^{r,\infty}(\R)}&\geq c_j(x_2)\,2^{j/r}\sup_{t>0}\, t^{1/r}\psi^*(2^jt) =c_j(x_2)\|\psi\|_{L^{r,\infty}(\R)} \gtrsim 2^{j/p}\lambda_{j,\lfloor 2^j x_2\rfloor}. \nonumber
\end{align}
Hence, for a.e. $x_2\in[1,2]$,
$$ \|f(\cdot,x_2)\|_{L^{r,\infty}(\R)}\gtrsim \,\sup_{j\geq0}\,2^{j/p}\lambda_{j,\lfloor2^jx_2\rfloor}=\infty.$$
This completes the proof.
\end{proof}

\section{Characterization of restrictions of Besov functions}\label{SE:WEIGHT}

In this section, we prove that Besov spaces of generalized smoothness are the natural scale in which to look for restrictions of Besov functions. More precisely, we will prove Theorems \ref{TH:WEIGHT} and \ref{TH:WEIGHT33}. We present several results, with different assumptions and different controls on the norm of $f(\cdot,x'')$.

Let us begin with the following
\begin{theo}\label{TH2:WEIGHT}
Let $N\geq2$, $1\leq d<N$, $0<p<q\leq\infty$ and let $s>\sigma_p$. Let $K\subset\R^{N-d}$ be a compact set. Let $\Phi$ and $\Psi$ be two admissible functions such that
\begin{align}
\sum_{j\geq0}\Phi(2^{-j})^{-p}\Psi(2^{-j})^p<\infty. \label{H:WEIGHT222}
\end{align}
Let $f\in B_{p,q}^{(s,\Phi)}(\R^N)$. Then, there exists a constant $C>0$ such that
\begin{align*}
\|f(\cdot,x'')\|_{B_{p,q}^{(s,\Psi)}(\R^d)}\leq C\|f\|_{B_{p,q}^{(s,\Phi)}(\R^N)} \qquad{\mbox{ for a.e. }}~~x''\in K. %\label{G:conclusio}
\end{align*}
Moreover, the constant $C$ is independent of $x''$ but may depend on $f$, $K$, $N$, $d$, $p$, $q$, $\Phi$ and $\Psi$.
\end{theo}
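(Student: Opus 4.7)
The plan is as follows. First I reduce to the case $K = [1, 2]^{N-d}$ by the standard scaling-translation argument. Since $p < q \leq \infty$ forces $p < \infty$, Theorem \ref{G:BspqQuark} provides, for any $\varrho > r$ (to be chosen large later), an optimal subatomic decomposition $f = \sum_{\beta, \nu, m} \eta_{\nu, m}^\beta (\beta\mathrm{qu})_{\nu, m}^\Phi$ of $f$ in $(s, p, \Phi)$-quarks built on a product $\psi$ as in \eqref{prodstruc}, satisfying
$$\|f\|_{B_{p,q}^{(s, \Phi)}(\R^N)} \sim \sup_{\beta \in \N^N} 2^{\varrho |\beta|} \|\eta^\beta\|_{b_{p,q}}.$$

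Each $(s, p, \Phi)$-$\beta$-quark on $\R^N$ equals $2^{\nu(N-d)/p} \Phi(2^{-\nu})^{-1} \Psi(2^{-\nu})$ multiplied by an $(s, p, \Psi)$-$\beta'$-quark on $\R^d$ and by $\psi^{\beta''}(2^\nu x'' - m'')$. Inserting this into the machinery of Section \ref{SE:ESTIMATE} and running Lemma \ref{dommm} in this mixed setting (as in Remark \ref{R:dommm}) yields, for any $0 < \varrho_0 < \varrho$,
$$\|f(\cdot, x'')\|_{B_{p,q}^{(s,\Psi)}(\R^d)} \lesssim \sum_{\delta \in \mathcal D} \sup_{\beta \in \N^N} 2^{\varrho_0 |\beta|} \bigg(\sum_{\nu \geq 0} \bigg(\mu_\nu\, 2^{\nu(N-d)} \sum_{m' \in \Z^d} |\eta_{\nu, m', \lfloor 2^\nu x''\rfloor + \delta}^\beta|^p\bigg)^{q/p}\bigg)^{1/q},$$
with $\mu_\nu := \Phi(2^{-\nu})^{-p} \Psi(2^{-\nu})^p$ (modification if $q = \infty$). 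The whole issue is to control pointwise, for a.e.\ $x''$, the additional factor $\mu_\nu$ that was absent in Proposition \ref{caracBesov2}.

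This is done with Lemma \ref{LE:tech} (applied to the nonnegative sequence $\Lambda_{\nu, m''}^\beta := \sum_{m' \in \Z^d} |\eta_{\nu, m', m''}^\beta|^p$ viewed on $\N^{N-d}$, with a straightforward adaptation to let $\beta$ run in $\N^N$). Hypothesis \eqref{H:WEIGHT222} is precisely what allows me to pick $\alpha_\nu := \max\{\mu_\nu, 2^{-\nu}\}$ as a positive $\ell^1(\N)$ sequence satisfying $\mu_\nu / \alpha_\nu \leq 1$, and Lemma \ref{LE:tech} then produces, uniformly in $(\nu, \beta)$ and for a.e.\ $x'' \in [1, 2]^{N-d}$,
$$\mu_\nu\, 2^{\nu(N-d)} \sum_{m' \in \Z^d} |\eta_{\nu, m', \lfloor 2^\nu x''\rfloor + \delta}^\beta|^p \leq C \max\{1, |\beta|^{N+1}\} \sum_{m \in \Z^N} |\eta_{\nu, m}^\beta|^p.$$
Raising to the $q/p$-power, summing in $\nu$, and taking a $q$-th root then gives
$$\|f(\cdot, x'')\|_{B_{p,q}^{(s,\Psi)}(\R^d)} \lesssim \sup_{\beta \in \N^N} 2^{\varrho_0 |\beta|} \max\{1, |\beta|^{(N+1)/p}\} \|\eta^\beta\|_{b_{p,q}}.$$
Finally, I choose $\varrho$ in the original decomposition so large that $\sup_\beta 2^{(\varrho_0 - \varrho)|\beta|} \max\{1, |\beta|^{(N+1)/p}\}$ is bounded; the right-hand side is then dominated by $\sup_\beta 2^{\varrho |\beta|} \|\eta^\beta\|_{b_{p,q}} \sim \|f\|_{B_{p,q}^{(s, \Phi)}(\R^N)}$, giving the conclusion on $[1, 2]^{N-d}$. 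The case $q = \infty$ goes through with only notational changes.

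The main obstacle is arguably the careful set-up of the mixed decomposition that produces the $\mu_\nu$ factor and of the $\beta$-bookkeeping inherited from Lemma \ref{dommm}. Once this is in place, the genuinely new point relative to Proposition \ref{caracBesov2} (in which $q \leq p$ permitted a Jensen-Fubini integration in $x''$) is that with $p < q$ integration in $x''$ is not available, and a pointwise a.e.\ estimate must be extracted instead; the Borel-Cantelli-based Lemma \ref{LE:tech}, combined with the $\ell^1(\N)$ hypothesis \eqref{H:WEIGHT222} on $\mu_\nu$, is precisely the right tool for this.
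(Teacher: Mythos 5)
Your proof is correct and follows essentially the same route as the paper's: optimal subatomic decomposition, the mixed-quark rewriting of Section \ref{SE:ESTIMATE} together with Lemma \ref{dommm} / Remark \ref{R:dommm} to produce the factor $\mu_\nu=\Phi(2^{-\nu})^{-p}\Psi(2^{-\nu})^p$, and then Lemma \ref{LE:tech} with an $\ell^1(\N)$ sequence $\alpha_\nu$ tailored to \eqref{H:WEIGHT222} to turn the integrated estimate into a pointwise one, absorbing the polynomial $|\beta|$ factor into the gap $\varrho-\varrho_0$. The only cosmetic differences are that the paper first treats $\Phi\equiv1$ and handles general $\Phi$ by remark, and it takes $\alpha_\nu=\Psi(2^{-\nu})^p$ directly (your $\alpha_\nu=\max\{\mu_\nu,2^{-\nu}\}$ is a harmless variant, and unnecessary since $\mu_\nu>0$ already).
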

\begin{proof}
Let us first suppose that $q<\infty$ and that $\Phi\equiv1$. Without loss of generality we may take $K=[1,2]^{N-d}$, the general case being only a matter of scaling. Here again, we use the following short notation
$$\lfloor 2^\nu x''\rfloor=(\lfloor 2^\nu x_{d+1}\rfloor,...\,,\lfloor2^\nu x_N\rfloor)\in\N^{N-d}.$$
Let $f\in B_{p,q}^s(\R^N)$ and write its subatomic decomposition as
$$ f(x)=\sum_{\beta\in\N^N}\sum_{\nu\geq0}\sum_{m\in\Z^N}\lambda_{\nu,m}^\beta(\beta\mathrm{qu})_{\nu,m}(x), $$
where the $(s,p)$-$\beta$-quarks $(\beta\mathrm{qu})_{\nu,m}$ are as in Section \ref{SE:ESTIMATE} and $\lambda_{\nu,m}^\beta=\lambda_{\nu,m}^\beta(f)$ is the optimal subatomic decomposition of $f$, i.e. such that
\begin{align}
\|f\|_{B_{p,q}^s(\R^N)}\sim\sup_{\beta\in\N^N}2^{\varrho|\beta|}\|\lambda^\beta\|_{b_{p,q}}. \label{WEI:HYPOF}
\end{align}
Let $\eps>0$ be small. Rewriting $f$ as in the discussion at Section \ref{SE:ESTIMATE} and using Lemma \ref{dommm} together with Remark \ref{R:dommm} we have
\begin{align}
\|f(\cdot,x'')\|_{B_{p,q}^{(s,\Psi)}(\R^d)}\lesssim \sum_{\delta\in\mathcal{D}}\tilde{J}_{p,q}^{\varrho-\eps,\delta}(\lambda,x''), \label{G:sub}
\end{align}
where
\begin{align} \tilde{J}_{p,q}^{\varrho-\eps,\delta}(\lambda,x''):=\sup_{\beta\in\N^N}2^{(\varrho-\eps)|\beta|}\bigg(\sum_{\nu\geq0}\bigg(2^{\nu(N-d)}\Psi(2^{-\nu})^p\sum_{m'\in\Z^d}|\lambda_{\nu,m',\lfloor2^\nu x''\rfloor+\delta}^\beta|^p\bigg)^{q/p}\bigg)^{1/q}. \label{leJi}
\end{align}
By Lemma \ref{LE:tech}, we know that for any positive sequence $(\alpha_\nu)_{\nu\geq0}\in\ell^1(\N)$ there is a constant $C=C(\lambda,\alpha,N,d)>0$ such that
\begin{align*}
2^{\nu(N-d)}\sum_{m'\in\Z^d}|\lambda_{\nu,m',\lfloor 2^\nu x''\rfloor+\delta}^\beta|^p\leq C\frac{\max\{1,|\beta|^{N-d+1}\}}{\alpha_\nu}\sum_{m\in\Z^N}|\lambda_{\nu,m}^\beta|^p, %\label{suba}
\end{align*}
for a.e. $x''\in[1,2]^{N-d}$ and any $(\nu,\beta)\in\N\times\N^N$.
In particular, we have
$$ \tilde{J}_{p,q}^{\varrho-\eps,\delta}(\lambda,x'')\lesssim \sup_{\beta\in\N^N}2^{(\varrho-\eps)|\beta|}\max\{1,|\beta|^{\frac{N-d+1}{p}}\}\bigg(\sum_{\nu\geq0}\bigg(\frac{\Psi(2^{-\nu})^p}{\alpha_\nu}\sum_{m\in\Z^N}|\lambda_{\nu,m}^\beta|^p\bigg)^{q/p}\bigg)^{1/q}. $$
Now, by assumption \eqref{H:WEIGHT222}, we can choose $\alpha_\nu=\Psi(2^{-\nu})^p$. Therefore, recalling \eqref{G:sub}, we have
\begin{align}
\|f(\cdot,x'')\|_{B_{p,q}^{(s,\Psi)}(\R^d)}&\lesssim \sup_{\beta\in\N^N}2^{(\varrho-\eps)|\beta|}\max\{1,|\beta|^{\frac{N-d+1}{p}}\}\bigg(\sum_{\nu\geq0}\bigg(\sum_{m\in\Z^N}|\lambda_{\nu,m}^\beta|^p\bigg)^{q/p}\bigg)^{1/q} \nonumber \\
&\leq \sup_{\beta\in\N^N}2^{\varrho|\beta|}\bigg(\sum_{\nu\geq0}\bigg(\sum_{m\in\Z^N}|\lambda_{\nu,m}^\beta|^p\bigg)^{q/p}\bigg)^{1/q}. \nonumber
\end{align}
Finally, recalling \eqref{WEI:HYPOF}, we have
\begin{align*}
\|f(\cdot,x'')\|_{B_{p,q}^{(s,\Psi)}(\R^d)}\lesssim \|f\|_{B_{p,q}^s(\R^N)} \qquad{\mbox{ for a.e. }}~~x''\in[1,2]^{N-d}.
\end{align*}
The proof when $q=\infty$ and/or $\Phi\not\equiv1$ is similar (recall Remark \ref{R:dommm}). It this latter case, one only have to adjust the $(s,p)$-$\beta$-quarks by a factor of $\Phi(2^{-\nu})^{-1}$ and to replace $\lambda_{\nu,m}^\beta$ by $\eta_{\nu,m}^\beta$, the optimal decomposition of $f\in B_{p,q}^{(s,\Phi)}(\R^N)$ along these $(s,p,\Phi)$-$\beta$-quarks. Then, as in Remark \ref{R:dommm}, it suffices to replace $\Psi(2^{-\nu})\lambda_{\nu,m}^\beta$ in the estimates \eqref{G:sub} and \eqref{leJi} of $\|f(\cdot,x'')\|_{B_{p,q}^{(s,\Psi)}(\R^d)}$ by $\Psi(2^{-\nu})/\Phi(2^{-\nu})\,\eta_{\nu,m}^\beta$ and the same proof yields the desired conclusion.
\end{proof}
We carry on with the following generalization of Theorem \ref{TH:WEIGHT}.
\begin{theo}\label{TH3:WEIGHT}
Let $N\geq 3$, $1\leq d<N$, $0<r\leq p<q\leq \infty$, $s>\sigma_p$ and let $\chi=\frac{qr}{q-r}$ (resp. $\chi=r$ if $q=\infty$). Let $\Phi$ and $\Psi$ be two admissible functions such that
\begin{align}
\sum_{j\geq0}\Phi(2^{-j})^{-\chi}\Psi(2^{-j})^\chi<\infty. \label{H:WEIGHT55}
\end{align}
Let $K\subset\R^{N-d}$ be a compact set and suppose that $f\in B_{p,q}^{(s,\Phi)}(\R^N)$. Then,
$$ \left(\int_K\|f(\cdot,x'')\|_{B_{p,r}^{(s,\Psi)}(\R^d)}^r\mathrm{d}x''\right)^{1/r}\leq C\|f\|_{B_{p,q}^{(s,\Phi)}(\R^N)}, $$
for some constant $C=C(K,N,d,p,q,\Psi)>0$.
\end{theo}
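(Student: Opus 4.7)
The plan is to extend the argument of Proposition \ref{caracBesov2} to the asymmetric setting where integration over $K$ uses exponent $r$ while the ambient space has smoothness index $q>p$; the gap between $r$ and $q$ will be absorbed by \eqref{H:WEIGHT55} through a Hölder step in the dyadic frequency $\nu$. By translation and scaling it suffices to take $K=[1,2]^{N-d}$. Fix $\varrho>0$ large (to be specified later) and let $\eta_{\nu,m}^\beta=\eta_{\nu,m}^\beta(f)$ be an optimal subatomic decomposition of $f\in B_{p,q}^{(s,\Phi)}(\R^N)$ along $(s,p,\Phi)$-$\beta$-quarks, so that $\|f\|_{B_{p,q}^{(s,\Phi)}(\R^N)}\sim\sup_\beta 2^{\varrho|\beta|}\|\eta^\beta\|_{b_{p,q}}$. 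Adapting Remark \ref{R:dommm} to $B_{p,r}^{(s,\Psi)}(\R^d)$ in place of $B_{p,q}^{(s,\Psi)}(\R^d)$ (the proof of Lemma \ref{dommm} goes through verbatim with $r$ replacing $q$ in the outer exponent), one obtains, for some $\varrho'\in(0,\varrho)$,
\begin{align*}
\|f(\cdot,x'')\|_{B_{p,r}^{(s,\Psi)}(\R^d)}^r \lesssim \sum_{\delta\in\mathcal{D}}\sup_{\beta\in\N^N}2^{\varrho' r|\beta|}\sum_{\nu\geq 0}\bigg(\frac{\Psi(2^{-\nu})^p}{\Phi(2^{-\nu})^p}\,2^{\nu(N-d)}\sum_{m'\in\Z^d}|\eta_{\nu,m',\lfloor 2^\nu x''\rfloor+\delta}^\beta|^p\bigg)^{r/p}.
\end{align*}

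Next I would integrate over $K$, dominate $\sup_\beta$ by $\sum_\beta$, and apply Fubini. For fixed $(\nu,\beta)$, set $A_{m''}^{\nu,\beta}:=\sum_{m'\in\Z^d}|\eta_{\nu,m',m''}^\beta|^p$. Since $x''\mapsto\lfloor 2^\nu x''\rfloor$ is constant on dyadic cubes of side $2^{-\nu}$ and its image $T_\nu\subset\Z^{N-d}$ on $K$ has cardinality $\lesssim 2^{\nu(N-d)}$,
\begin{align*}
\int_K (A_{\lfloor 2^\nu x''\rfloor+\delta}^{\nu,\beta})^{r/p}\,\mathrm{d}x'' = 2^{-\nu(N-d)}\!\!\sum_{m''\in T_\nu}(A_{m''+\delta}^{\nu,\beta})^{r/p}.
\end{align*}
Since $r\leq p$, Hölder's inequality with exponents $p/r$ and $p/(p-r)$ (trivial when $r=p$) gives
\begin{align*}
\sum_{m''\in T_\nu}(A_{m''+\delta}^{\nu,\beta})^{r/p}\leq |T_\nu|^{1-r/p}\bigg(\sum_{m\in\Z^N}|\eta_{\nu,m}^\beta|^p\bigg)^{r/p}\lesssim 2^{\nu(N-d)(1-r/p)}(B_\nu^\beta)^{r/p},
\end{align*}
where $B_\nu^\beta:=\sum_{m\in\Z^N}|\eta_{\nu,m}^\beta|^p$. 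The three $\nu$-factors $2^{\nu(N-d)r/p}\cdot 2^{-\nu(N-d)}\cdot 2^{\nu(N-d)(1-r/p)}=1$ cancel exactly, which is the crucial mechanism of the proof.

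Collecting the above, the problem reduces to bounding $\sum_\beta 2^{\varrho' r|\beta|}\sum_\nu(\Psi/\Phi)^r (B_\nu^\beta)^{r/p}$. Applying Hölder in $\nu$ with conjugate exponents $q/r$ and $q/(q-r)$ separates the two factors: the weight contribution is $\bigl(\sum_\nu(\Psi(2^{-\nu})/\Phi(2^{-\nu}))^\chi\bigr)^{(q-r)/q}<\infty$ by \eqref{H:WEIGHT55}, since $\chi=rq/(q-r)$, while the sequence contribution equals $\|\eta^\beta\|_{b_{p,q}}^r$. The $\beta$-summation is finished using $\|\eta^\beta\|_{b_{p,q}}\leq 2^{-\varrho|\beta|}\sup_\gamma 2^{\varrho|\gamma|}\|\eta^\gamma\|_{b_{p,q}}$ and the geometric series $\sum_\beta 2^{(\varrho'-\varrho)r|\beta|}<\infty$, which converges provided $\varrho>\varrho'$. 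The optimality of the decomposition then yields the desired bound by $C\|f\|^r_{B_{p,q}^{(s,\Phi)}(\R^N)}$. The case $q=\infty$ is handled in the same manner with the Hölder step replaced by a supremum estimate, matching $\chi=r$. The main technical hurdle is the bookkeeping across three interacting scales ($\beta,\nu,m''$), and in particular securing the exact cancellation of $\nu$-weights noted above, which is what makes the argument work uniformly in the range $r\leq p$.
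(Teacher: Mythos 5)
Your proof is correct, but it takes a longer and more hands-on route than the paper's. The paper's argument is essentially two lines: the hypothesis \eqref{H:WEIGHT55} combined with Hölder's inequality in the dyadic scale $j$ (with exponents $q/r$ and $q/(q-r)$, exactly the Hölder step you perform at the end) yields the \emph{ambient} embedding
$$
\|f\|_{B_{p,r}^{(s,\Psi)}(\R^N)}\leq\bigg(\sum_{j\geq0}\Phi(2^{-j})^{-\chi}\Psi(2^{-j})^{\chi}\bigg)^{1/\chi}\|f\|_{B_{p,q}^{(s,\Phi)}(\R^N)},
$$
and then, since $r\leq p$, Proposition \ref{caracBesov2} applied with $q$ replaced by $r$ immediately gives the conclusion. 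You instead re-open the subatomic machinery (Lemma \ref{dommm} and Remark \ref{R:dommm} with $r$ in the outer exponent), integrate over $K$ by hand, and perform both the $r/p$-Hölder in $m''$ and the $q/r$-Hölder in $\nu$ at the level of coefficients. This is mathematically sound and in fact replicates, in unpacked form, exactly what the combination "embedding $+$ Proposition \ref{caracBesov2}" does: the exact $\nu$-exponent cancellation you highlight is precisely what makes Lemma \ref{ell1ekiv} together with $r\leq p$ work inside Proposition \ref{caracBesov2}. The upshot is that the paper's factorization (embed the ambient space first, then apply the Fubini-type estimate once and for all) is cleaner and avoids re-tracking the three-index bookkeeping; your version has the minor merit of displaying the mechanism, but it does not yield anything new. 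One small bookkeeping note: the optimal decomposition at parameter $\varrho$ feeds Lemma \ref{dommm} at $\varrho'<\varrho_0<\varrho$, so the geometric factor you sum in $\beta$ should be $2^{(\varrho_0-\varrho)r|\beta|}$ rather than $2^{(\varrho'-\varrho)r|\beta|}$; this does not affect convergence.
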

\begin{proof}
By H\"older's inequality, the definition of the norms involved and our assumptions on $s$, $p$, $q$, $r$, $\chi$, we see that \eqref{H:WEIGHT55} implies that $B_{p,q}^{(s,\Phi)}(\R^N)\subset B_{p,r}^{(s,\Psi)}(\R^N)$ continuously, i.e.
$$\|f\|_{B_{p,r}^{(s,\Psi)}(\R^N)}\leq \bigg(\sum_{j\geq0}\Phi(2^{-j})^{-\chi}\Psi(2^{-j})^\chi\bigg)^{1/\chi}\|f\|_{B_{p,q}^{(s,\Phi)}(\R^N)}.$$
Using now Proposition \ref{caracBesov2}, we obtain the desired conclusion.
\end{proof}
We now prove Theorem \ref{TH:WEIGHT33}.
\begin{proof}[Proof of Theorem \ref{TH:WEIGHT33}]
The proof works exactly as in Theorem \ref{THEOREM} and, here again, it suffices to prove the result for $N\geq2$, $d=N-1$. We prove the case $N=2$ only but the general case $N\geq2$ is similar. Let $(\lambda_{j,k})_{j,k\geq0}$ be the sequence constructed at Lemma \ref{LE:CTR:WEIGHT3}. Let $M\in\N^*$ with $s<M$. We consider the following function
\begin{align*}
f(x_1,x_2)=\sum_{j\geq0}\sum_{k\geq0}\lambda_{j,k}2^{-j(s-\frac{2}{p})}\psi(2^j(x_1-C_Mj))\psi(2^jx_2-k), %\label{DE:f2}
\end{align*}
where $\psi$ and $C_M$ are as in the proof of Theorem \ref{THEOREM}. There, we have shown that
$$ \|f\|_{B_{p,q}^s(\R^2)}\leq c\|\lambda\|_{b_{p,q}}, $$
(thus implying $f\in B_{p,q}^s(\R^2)$) and
\begin{align*}
\sup_{h\in K_j}\,\|\Delta_h^Mf(\cdot,x_2)\|_{L^p(\R)}\geq c\,2^{-js}2^{j/p}\lambda_{j,\lfloor2^jx_2\rfloor}, %\label{ghj2}
\end{align*}
for any $j\geq0$ and a.e. $x_2\in[1,2]$, where $K_j$ is as in \eqref{DE:Kj}. From this it follows that
\begin{align*}
2^{js}\Psi(2^{-j})\sup_{h\in K_j}\,\|\Delta_h^Mf(\cdot,x_2)\|_{L^p(\R)}&\geq c\,\Psi(2^{-j})2^{j/p}\lambda_{j,\lfloor2^jx_2\rfloor}.
\end{align*}
Now, by Lemma \ref{LE:CTR:WEIGHT3} and since
\begin{align*}
\|g\|_{B_{p,q}^{(s,\Psi)}(\R)}&\sim \|g\|_{L^p(\R)}+\bigg(\sum_{j\geq0}\,2^{jsq}\Psi(2^{-j})^q\sup_{h\in K_j}\,\|\Delta_h^Mg\|_{L^p(\R)}^q\bigg)^{1/q},
\end{align*}
is an equivalent quasi-norm on $B_{p,q}^{(s,\Psi)}(\R)$, we have $f\in B_{p,q}^s(\R^2)$ and $f(\cdot,x_2)\notin B_{p,q}^{(s,\Psi)}(\R)$ for a.e. $x_2\in[1,2]$. Then, arguing exactly as in Theorem \ref{THEOREM}, we obtain a function satisfying the requirements of Theorem \ref{TH:WEIGHT33}. This completes the proof.
\end{proof}
\begin{remark}\label{RE:BGene}
This can be generalized in the spirit of Theorem \ref{TH2:WEIGHT}. Indeed, repeating the arguments of the proof of Theorem \ref{THEOREM2}, we can prove that if \eqref{H:WEIGHT55} is violated, then there is a function $f\in B_{p,q}^{(s,\Phi)}(\R^N)$ such that $f(\cdot,x'')\notin B_{p,q}^{(s,\Psi)}(\R^d)$ for a.e. $x''\in\R^{N-d}$.
\end{remark}

\section*{Acknowledgments} The author would like to thank \emph{Petru Mironescu} who attracted his attention to the problem and for useful discussions. The author is also grateful to \emph{J\'er\^ome Coville}, \emph{Fran\c{c}ois Hamel} and \emph{Enrico Valdinoci} for their encouragements. The author warmly thanks the anonymous referee for careful reading of the manuscript and useful remarks that
led to the improvement of the presentation. This project has been supported by the French National Research Agency (ANR) in the framework of the ANR NONLOCAL project (ANR-14-CE25-0013).

\vspace{2mm}

\end{document}